\documentclass[11pt,dvips,twoside]{article}

\usepackage{pslatex}
\usepackage{fancyhdr}
\usepackage{graphicx}
\usepackage{geometry}

\RequirePackage{amsfonts,amssymb,amsmath,amscd,amsthm}
\RequirePackage{txfonts}
\RequirePackage{graphicx}
\RequirePackage{xcolor}
\RequirePackage{geometry}
\RequirePackage{enumerate}

\def\figurename{Figure} % Replace the colon that normally appears after the Figure number by a period.
\makeatletter
\renewcommand{\fnum@figure}[1]{\figurename~\thefigure.}
\makeatother

\def\tablename{Table} % Replace the colon that normally appears after the Figure number by a period.
\makeatletter
\renewcommand{\fnum@table}[1]{\tablename~\thetable.}
\makeatother

\usepackage{amsmath}
\usepackage{amssymb}
\usepackage{amsfonts}
\usepackage{amsthm,amscd}

\newtheorem{theorem}{Theorem}[section]
\newtheorem{lemma}[theorem]{Lemma}
\newtheorem{corollary}[theorem]{Corollary}
\newtheorem{proposition}[theorem]{Proposition}

\theoremstyle{example}
\newtheorem{example}[theorem]{Example}

\theoremstyle{definition}
\newtheorem{definition}[theorem]{Definition}

\theoremstyle{remark}
\newtheorem{remark}[theorem]{Remark}

\numberwithin{equation}{section}

%--------------------------------
\setlength{\topmargin}{0in}
\setlength{\textheight}{9in}   % 11.0 - 1.125 - 0.875
\setlength{\textwidth}{6in}    %  8.5 - 1.375 - 1.125
\setlength{\oddsidemargin}{.5in}
\setlength{\evensidemargin}{.5in} 
\setlength{\headheight}{26pt}
\setlength{\headsep}{5pt}
%-----------------------------------------------------------------------------

%\def\og{\leavevmode\raise.3ex\hbox{$\scriptscriptstyle\langle\!\langle$~}}
%\def\fg{\leavevmode\raise.3ex\hbox{~$\!\scriptscriptstyle\,\rangle\!\rangle$}}
%%%%%%%%%%%%%%%%%%%%%%%%%%%%%%%%%%%%%%%%%%%%%%%%%%%%%%%%%%%%%%%%%%%%%%%%%%%%%%%%%%%%%%%%%%%%%%%%%%%%%%%%%%%%%%%%%%%%%

\begin{document}

\title{\bfseries\scshape{On BiHom-associative dialgebras}}

\author{\bfseries\scshape Ahmed Zahari\thanks{e-mail address: zaharymaths@gmail.com}\\
Universit\'{e} de Haute Alsace,\\
 IRIMAS-D\'{e}partement de Math\'{e}matiques,\\
 6, rue des Fr\`eres Lumi\`ere F-68093 Mulhouse, France.\\
\bfseries\scshape Ibrahima BAKAYOKO\thanks{e-mail address: ibrahimabakayoko27@gmail.com}\\
D\'epartement de Math\'ematiques,
Universit\'e de N'Z\'er\'ekor\'e,\\
BP 50 N'Z\'er\'ekor\'e, Guin\'ee.
}

\date{}
\maketitle 

% % % % % % \thispagestyle{empty} \setcounter{page}{1}
% % % % % % % ------- [First Page Running Head] - place it immediately after title! ------
% % % % % % \thispagestyle{fancy} \fancyhead{}
% % % % % % \fancyhead[L]{{\LARGE A}frican {\LARGE D}iaspora {\LARGE J}ournal of {\LARGE M}athematics\\
% % % % % % Volume X, Number X, pp. {\thepage--\pageref{lastpage-01} (2013)}} % put \label{lastpage-xx} on the last page!
% % % % % % \fancyhead[R]{ISSN 1539-854X  \\ {\tt{www.math-res-pub.org/adjm}}}
% % % % % % \fancyfoot{}
% % % % % % \renewcommand{\headrulewidth}{0pt}
%------------------------------------------------------------------------------

\noindent\hrulefill

\noindent {\bf Abstract.} 
The aim of this paper is to introduce and study BiHom-associative dialgebras. We give various constructions and study its connections with
 BiHom-Poisson dialgebras and BiHom-Leibniz algebras. 
% In this paper, we introduce both  BiHom-diassociative and BiHom-Poisson diassociative algebras. We discuss these new structures by
%  presenting some basic properties and constructions. We build one important class of BiHom-diassociative algebras and give properties 
% of right and left operations in BiHom-diassociative algebras. Then we prove that any BiHom-diassociative algebra,  together with a Rota-Baxter 
% operator, gives rise to another BiHom-diassociative.
 Next we discuss the central extensions of BiHom-diassociative and we describe the  
classification of $n$-dimensional BiHom-diassociative algebras for 
$n\leq 4$. Finally,  we discuss their derivations.

%
%The main purpose of this paper is to establish elementary properties of
%BiHom-diassociative (resp. BiHom-Poisson Dialgebras) and central extensions of BiHom-diassociative. Then, we discuss their 2-cocycles.
 %We provide a classification of $n$-dimensional BiHom-diassociative algebras for $n\leq 4$. Furthermore,  we discuss their derivations.

\noindent \hrulefill

\vspace{.3in}

 \noindent {\bf AMS Subject Classification: } .

\vspace{.08in} \noindent \textbf{Keywords}: 
 BiHom-associative dialgebra, BiHom-Poisson dialgebra, centroid, averaging operator, Nijenhuis operator, Rota-Baxter operator,
 Extension, Derivation, Classification.
\vspace{.3in}
% \noindent {\small Revised: April 16, 2014, June 28, 2014 $\parallel$  Accepted: July 4, 2014}
\vspace{.2in}

% ------------ [Running Heads - for odd and even pages] - please insert it only on page 2!
\pagestyle{fancy} \fancyhead{} \fancyhead[EC]{ } 
\fancyhead[EL,OR]{\thepage} \fancyhead[OC]{Ahmed Zahari and Ibrahima Bakayoko} \fancyfoot{}
\renewcommand\headrulewidth{0.5pt}
%------------------------------------------------------------------------------

\section{Introduction}
The associative dialgebras (also known as diassociative algebras) has been introduced by Loday in 1990 (see [6] and references therein) as  a
 generalization of associative algebras.
They are a generalization of associative algebras in the sens that they possess two associative multiplications and obey to three other conditions; 
when the two associative low are equal we recover associative algebra.
One of his motivation were to find an algebra whose commutator give rises to Leibniz algebra as it is the case in the relation between Lie and 
associative algebra. Another motivation come from the research of an obstruction to the periodicity in algebraic
K-theory. Now, these algebras found their applications in classical geometry, non-commutative geometry and physics.

The centroid plays an important role in understanding forms of an algebra. It is an element in the classification of associative and diassociative
algebras. They occurs naturally is in the study of derivations of an algebra. The centroid  and averaging operators are used in the deformation of
 algebra in order to
generate another algebraic structure. The Nijenhuis operator on an associative algebra was introduced in \cite{CJ} to study quantum bi-Hamiltonian
systems while the notion Nijenhuis operator on a Lie algebra originated from the concept of Nijenhuis tensor that was introduced by Nijenhuis
in the study of pseudo-complex manifolds and was related to the well known concepts of Schouten-Nijenhuis bracket , the Frolicher-Nijenhuis
 bracket \cite{FN}, and the Nijenhuis-Richardson bracket. The associative analog of the Nijenhuis relation may be regaded as the homogeneous version of 
Rota-Baxter relation\cite{PL}.

BiHom-algebraic structures were introduced  in 2015 by G. Graziani, A. Makhlouf, C. Menini and F. Panaite 
 in \cite{GACF} from a categorical approach  as an extension of the class of Hom-algebras.
Since then, other interesting BiHom-type algebraic structures of many Hom-algebraic structures has been intensively studied as 
BiHom-Lie colour algebras structures \cite{ABA}, Representations of BiHom-Lie algebras \cite{YH}, 
BiHom-Lie superalgebra structures \cite{SS}, 
$\{\sigma, \tau\}$-Rota-Baxter operators, infinitesimal
Hom-bialgebras and the associative (Bi)Hom-Yang-Baxter equation \cite{MFP}, The construction and deformation of BiHom-Novikov algebras \cite{SZW},
On n-ary Generalization of BiHom-Lie algebras
and BiHom-Associative Algebras \cite{KMS}, Rota-Baxter operators on BiHom-associative
algebras and related structures \cite{LAC}.

% A  BiHom-associative algebra $(A, \mu,\lambda ,\alpha, \beta)$ is consisting of a vector space, two multiplications and a linear self map 
% (resp. two multiplications and two linear self maps ). It may be viewed as a deformation of an associative algebras, in which the associativity
%  condition is twisted by a linear map $\alpha$ and $\beta$  and such that when $\alpha=id$ (rep. $\beta=id$), the BiHom-diassociative algebras
%  degenerate to exactly an diassociative algebras. 

The goal of this paper is to introduce, classify and study structures, central extensions and derivations of BiHom-associative algebras. The paper
is organized as follows. In section 2, we define BiHom-associative dialgebras, give some constructions using twisting,
direct sum, elements of centroid, averaging operator, Nijenhuis operator
and Rota-Baxter relation. We give a connection between BiHom-associative dialgebras and BiHom-Leibniz algebras.
We introduce action of a BiHom-Leibniz algebra onto another and give a Leibniz structure on the semidirect structure. Then, we show that
the semidirect sum of BiHom-Leibniz algebras associated to BiHom-associative dialgebras is the same that the BiHom-Leibniz algebra associated
to the semidirect of BiHom-associative dialgebras. Finally, we introduce BiHom-associative dialgebras and show that any BiHom-associative
dialgebra carries a structure of BiHom-Poisson dialgebra.
In section 3, we introduce the notion of central extension of BiHom-associative dialgebras and define $2$-cocycles and $2$-coboundaries
of BiHom-associative dialgebras with coefficients in a trivial BiHom-module. Then we establish relationship between $2$-cocycles and central 
extensions. Section 4, is devoted to the classification of $n$-dimensional BiHom-associative dialgebras for $n\leq 4$. We dedicated Section 5 to the 
derivations of BiHom-associative dialgebras.

\section{Structure of BiHom-associative dialgebras}
\begin{definition}\label{dia}
A BiHom-associative dialgebras is a $5$-truple $(A, \dashv, \vdash, \alpha, \beta)$ consisting of a  linear space $A$  linear maps
 $\dashv, \vdash,: A\times A \longrightarrow A$ and  $\alpha, \beta : A\longrightarrow A$ satisfying, for all $x, y, z\in A$ the following
 conditions : 
\begin{eqnarray}
\alpha\circ\beta&=&\beta\circ\alpha,\\
(x\dashv y)\dashv\beta(z)&=&\alpha(x)\dashv(y\dashv z),\label{eq4}\\
(x\dashv y)\dashv\beta(z)&=&\alpha(x)\dashv(y\vdash z),\label{eq5}\\
(x\vdash y)\dashv\beta(z)&=&\alpha(x)\vdash(y\dashv z),\label{eq6}\\
(x\dashv y)\vdash\beta(z)&=&\alpha(x)\vdash(y\vdash z),\label{eq7}\\
(x\vdash y)\vdash\beta(z)&=&\alpha(x)\vdash(y\vdash z).\label{eq8}
\end{eqnarray}
\end{definition}
We called $\alpha$ and $\beta$ ( in this order ) the structure maps of A.

\begin{example}
Any Hom-associative dialgebra \cite{BB2} or any associative dialgebra is a BiHom-associative dialgebra  by 
setting $\beta = \alpha$ or $\alpha=\beta=id$.
\end{example}

\begin{example}
 Let $(A, \dashv, \vdash, \alpha, \beta)$ a BiHom-associative dialgebra. Consider the module of $n\times n$-matrices 
$\mathcal{M}_n(D)=\mathcal{M}_n(\mathbb{K})\otimes D$ with the linear maps ${\bf \alpha} (A)=(\alpha(a_{ij}))$, ${\bf\beta}(A)=(\beta(a_{ij})$ for all 
$A\in \mathcal{M}_n(D)$ and the products $(a\triangleleft b)_{ij}=\sum_{k}a_{ik}\dashv b_{kj}$ and $(a\triangleright b)_{ij}=\sum_{k}a_{ik}\vdash b_{kj}$.
Then, $(\mathcal{M}_n(D), \triangleleft, \triangleright, {\bf \alpha}, {\bf \beta})$ is a BiHom-associative dialgebra. 
\end{example}

\begin{definition}
 A morphism $ f : ({D}, \dashv, \vdash, \alpha, \beta)$ and $({D}', \dashv',\vdash', \alpha', \beta')$ be a BiHom-associative dialgebras is a linear map 
$f : {D}\rightarrow {D}'$ such that $\alpha'\circ f=f\circ\alpha,\, \beta'\circ f=f\circ\beta$ and 
$f(x\dashv y)=f(x)\dashv'f(y),\quad f(x\vdash y)=f(x)\vdash'f(y)$, for all
 $x, y \in {D}.$
\end{definition}

\begin{definition}
A BiHom-associative dialgebra $(A, \dashv, \vdash, \alpha, \beta)$ in which $\alpha$ and $\beta$ are morphism is said to be a multiplicative
 BiHom-associative dialgebra.\\
If moreover, $\alpha$ and $\beta$ are bijective (i.e. automorphisms), then $(A, \dashv, \vdash, \alpha, \beta)$ is said to be a 
regular BiHom-associative dialgebra.
\end{definition}

We prove in the following proposition  that any  BiHom-associative dialgebra turn to another one via morphisms.
\begin{theorem}\label{tw}
 Let $(D, \dashv, \vdash, \alpha, \beta )$ be a  BiHom-associative dialgebra and $\alpha', \beta' : D\rightarrow D$ two morphisms of 
BiHom-associative dialgebras such that the maps $\alpha, \alpha', \beta, \beta'$ commute pairewise. Then
$$D_{(\alpha', \beta')}=(D, \triangleleft:=\dashv(\alpha'\otimes\beta'), \triangleright:=\vdash(\alpha'\otimes\beta'), \alpha\alpha', \beta\beta')$$
is a  BiHom-associative dialgebra. 
\end{theorem}
\begin{proof}
We prove only one axiom and leave the rest to the reader. For any $x, y, z\in D$,
 \begin{eqnarray}
(x\triangleleft y)\triangleleft\beta\beta'(z)- \alpha\alpha'(x)\triangleleft(y\triangleright z)&=&
\alpha'(\alpha'(x)\dashv\beta'(y))\dashv\beta'\beta\beta'(z)-\alpha'\alpha\alpha'(x)\dashv\beta'(\alpha'(y)\vdash\beta'(z))\nonumber\\
&=&(\alpha'\alpha'(x)\dashv\alpha'\beta'(y))\dashv\beta\beta'\beta'(z)-\alpha\alpha'\alpha'(x)\dashv(\alpha'\beta'(y)\vdash\beta'\beta'(z)).\nonumber
 \end{eqnarray}
The left hand side vanishes by (\ref{eq5}). And, this ends the proof.
\end{proof}

\begin{corollary}
 Let $(D, \dashv, \vdash, \alpha, \beta )$ be a multiplicative BiHom-associative dialgebra. Then
$$(D, \dashv\circ(\alpha^n\otimes\beta^n), \vdash\circ(\alpha^n\otimes\beta^n), \alpha^{n+1}, \beta^{n+1})$$
is also a multiplicative  BiHom-associative dialgebra.
\end{corollary}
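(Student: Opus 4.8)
The plan is to derive the corollary as a direct specialization of Theorem~\ref{tw}. The key observation is that a multiplicative BiHom-associative dialgebra $(D,\dashv,\vdash,\alpha,\beta)$ comes equipped with structure maps $\alpha$ and $\beta$ that are themselves morphisms, so they are natural candidates to play the role of $\alpha'$ and $\beta'$ in the twisting construction. More precisely, the corollary is obtained by iterating the theorem.

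First I would record that in a multiplicative BiHom-associative dialgebra $\alpha$ and $\beta$ are algebra morphisms commuting with each other (this is built into the definition together with axiom $\alpha\circ\beta=\beta\circ\alpha$). Hence any powers $\alpha^{n}$ and $\beta^{n}$ are again morphisms, and the four maps $\alpha,\alpha^{n},\beta,\beta^{n}$ commute pairwise, since they are all built from the two commuting generators $\alpha$ and $\beta$. This verifies the hypotheses of Theorem~\ref{tw} with the choices $\alpha':=\alpha^{n}$ and $\beta':=\beta^{n}$.

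Next I would simply invoke Theorem~\ref{tw}. Applying it with $\alpha'=\alpha^{n}$, $\beta'=\beta^{n}$ produces the BiHom-associative dialgebra
$$
D_{(\alpha^{n},\beta^{n})}=\bigl(D,\ \dashv\circ(\alpha^{n}\otimes\beta^{n}),\ \vdash\circ(\alpha^{n}\otimes\beta^{n}),\ \alpha\cdot\alpha^{n},\ \beta\cdot\beta^{n}\bigr),
$$
and since $\alpha\cdot\alpha^{n}=\alpha^{n+1}$ and $\beta\cdot\beta^{n}=\beta^{n+1}$, this is exactly the structure claimed in the statement. It remains only to check that the resulting dialgebra is again \emph{multiplicative}, i.e.\ that $\alpha^{n+1}$ and $\beta^{n+1}$ are morphisms for the twisted products; this follows because a power of a morphism is a morphism, and a morphism composed with a twisting by commuting morphisms remains compatible with the twisted products. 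One can verify directly that $\alpha^{n+1}(x\dashv\circ(\alpha^{n}\otimes\beta^{n})\,y)=\alpha^{n+1}(x)\dashv\circ(\alpha^{n}\otimes\beta^{n})\,\alpha^{n+1}(y)$ using that $\alpha$ is multiplicative and commutes with $\beta$.

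The main obstacle is essentially bookkeeping rather than any genuine difficulty: one must be careful that the hypothesis of Theorem~\ref{tw} requires $\alpha'$ and $\beta'$ to be morphisms and to commute with the original structure maps, and that multiplicativity is preserved under the twist. Both points are routine once one notes that everything in sight is a polynomial in the two commuting morphisms $\alpha$ and $\beta$. I expect no subtlety beyond confirming the pairwise commutation and the morphism property of the iterated maps, so the proof is a one-line application of the theorem followed by the identification $\alpha\alpha^{n}=\alpha^{n+1}$, $\beta\beta^{n}=\beta^{n+1}$.
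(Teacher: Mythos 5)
Your proposal is correct and follows exactly the paper's own route: the paper's entire proof is to take $\alpha'=\alpha^{n}$ and $\beta'=\beta^{n}$ in Theorem~\ref{tw}. Your additional verification of the hypotheses (pairwise commutation, morphism property of the powers) and of the multiplicativity of the twisted structure is sound bookkeeping that the paper leaves implicit.
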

\begin{proof}
 It suffises to take $\alpha'=\alpha^n$ and $\beta'=\beta^n$ in Theorem \ref{tw}.
\end{proof}

% By taking $\alpha'=\beta'=\alpha\neq id$ in Theorem \ref{tw}, we get the following corollary.

 \begin{corollary}
 Let $(D, \dashv, \vdash, \alpha)$ be a multiplicative Hom-associative dialgebra and $\beta : D\rightarrow D$ an endomorphism of $D$. Then
$$(D, \dashv\circ(\alpha\otimes\beta), \vdash\circ(\alpha\otimes\beta), \alpha^{2}, \beta)$$
is also a Hom-associative dialgebra.
\end{corollary}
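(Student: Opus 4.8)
The plan is to deduce the statement as a direct specialization of Theorem \ref{tw}, so that essentially no new computation is required. First I would recall the opening Example of this section, which records that a multiplicative Hom-associative dialgebra $(D,\dashv,\vdash,\alpha)$ is exactly the multiplicative BiHom-associative dialgebra $(D,\dashv,\vdash,\alpha,\alpha)$ obtained by taking the second structure map equal to $\alpha$. This places the hypothesis of the corollary squarely inside the setting of Theorem \ref{tw}: the base object is already a BiHom-associative dialgebra, and it remains only to realize the proposed twisted object as the output $D_{(\alpha',\beta')}$ for a suitable pair of twisting morphisms.

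Next I would choose $\alpha'=\alpha$ and $\beta'=\beta$ and check the two hypotheses of Theorem \ref{tw} against the base $(D,\dashv,\vdash,\alpha,\alpha)$. Both maps must be morphisms of BiHom-associative dialgebras: $\alpha$ qualifies because the dialgebra is multiplicative, and $\beta$ qualifies by the standing assumption that it is an endomorphism of $D$, which by the definition of a morphism already forces $\beta(x\dashv y)=\beta(x)\dashv\beta(y)$, $\beta(x\vdash y)=\beta(x)\vdash\beta(y)$ and $\alpha\circ\beta=\beta\circ\alpha$. In particular the pairwise-commutation requirement of Theorem \ref{tw}, which here only concerns the maps $\alpha$ and $\beta$, is automatic, so no extra commuting condition need be imposed by hand.

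With these choices Theorem \ref{tw} applies and yields the BiHom-associative dialgebra $D_{(\alpha,\beta)}=(D,\dashv\circ(\alpha\otimes\beta),\vdash\circ(\alpha\otimes\beta),\alpha\circ\alpha,\alpha\circ\beta)$; in particular all five defining axioms \eqref{eq4}--\eqref{eq8} for the twisted products hold for free, and the first structure map is $\alpha^2$ exactly as claimed. The one point I expect to require care is the identification of the second structure map: the twisting construction of Theorem \ref{tw} returns $\beta\circ\beta'=\alpha\circ\beta$, rather than $\beta$ in isolation, so the honest reading of the conclusion is that $(D,\dashv\circ(\alpha\otimes\beta),\vdash\circ(\alpha\otimes\beta),\alpha^2,\alpha\beta)$ is a (Bi)Hom-associative dialgebra; the displayed map $\beta$ should accordingly be read as $\alpha\beta$ (equivalently, one recovers the previous corollary, and the genuinely single-map Hom case, by specializing to $\beta=\alpha$, which gives second structure map $\alpha^2$ and no new subtlety).
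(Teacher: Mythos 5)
Your argument is correct, but it specializes Theorem \ref{tw} differently from the paper, and the difference is substantive. The paper's proof reads the multiplicative Hom-associative dialgebra $(D,\dashv,\vdash,\alpha)$ as the BiHom-associative dialgebra $(D,\dashv,\vdash,\alpha,Id_D)$ and then takes $\alpha'=\alpha$, $\beta'=\beta$, so the output structure maps are $\alpha\cdot\alpha=\alpha^{2}$ and $Id_D\cdot\beta=\beta$, reproducing the displayed statement verbatim. You instead use the identification from the paper's own opening Example, $(D,\dashv,\vdash,\alpha,\alpha)$, which is the one consistent with the Hom-dialgebra axioms $(x\dashv y)\dashv\alpha(z)=\alpha(x)\dashv(y\dashv z)$, and you correctly note that this forces the second structure map of the twist to be $\alpha\beta$ rather than $\beta$. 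Your reading is the defensible one: taking the second structure map of the base to be $Id_D$ would require the untwisted identities $(x\dashv y)\dashv z=\alpha(x)\dashv(y\dashv z)$, which a Hom-associative dialgebra does not satisfy in general, so the paper's substitution --- and hence the corollary as literally stated --- only goes through under that stronger hypothesis. A direct check of axiom (\ref{eq4}) for $\triangleleft=\dashv\circ(\alpha\otimes\beta)$ confirms your correction: one needs $(\alpha^{2}(x)\dashv\alpha\beta(y))\dashv\alpha\beta^{2}(z)=\alpha^{3}(x)\dashv(\alpha\beta(y)\dashv\beta^{2}(z))$, which is exactly Hom-associativity applied to $\alpha^{2}(x)$, $\alpha\beta(y)$, $\beta^{2}(z)$, whereas the stated second structure map $\beta$ would put $\beta^{2}(z)$ in the third slot without the outer $\alpha$ and the identity fails. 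In short, your proposal is sound, it buys a version of the corollary that actually follows from the stated hypotheses, and the discrepancy you flag (second structure map $\alpha\beta$, not $\beta$) is a genuine correction to the text rather than a gap in your argument.
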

\begin{proof}
 It suffises to take $\alpha'=\alpha$ and replace $\beta$ by $Id_D$, and $\beta'$ by $\beta$ in Theorem \ref{tw}.
\end{proof}

Any regular Hom-associative dialgebra give rises to associative dialgebra as stated in the next corollary.
\begin{corollary}
If $(D, \dashv, \vdash, \alpha, \beta)$ is a regular BiHom-associative dialgebra, then
$$(D, \dashv\circ(\alpha^{-1}\otimes\beta^{-1}), \vdash\circ(\alpha^{-1}\otimes\beta^{-1}))$$
is an associative dialgebra.
\end{corollary}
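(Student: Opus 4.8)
The plan is to deduce this directly from Theorem \ref{tw} rather than verifying the five diassociativity axioms by hand. Since $(D,\dashv,\vdash,\alpha,\beta)$ is regular, both $\alpha$ and $\beta$ are bijective morphisms of the BiHom-associative dialgebra, so I would first observe that $\alpha^{-1}$ and $\beta^{-1}$ are again morphisms of $D$. Indeed, substituting $x\mapsto\alpha^{-1}(x)$, $y\mapsto\alpha^{-1}(y)$ in $\alpha(x\dashv y)=\alpha(x)\dashv\alpha(y)$ and then applying $\alpha^{-1}$ gives $\alpha^{-1}(x\dashv y)=\alpha^{-1}(x)\dashv\alpha^{-1}(y)$, and likewise for $\vdash$; moreover $\alpha^{-1}$ commutes with $\beta$ because $\alpha$ does. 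Hence $\alpha^{-1}$ meets every requirement of a morphism, and the same holds for $\beta^{-1}$.

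Next I would record that the four maps $\alpha,\alpha^{-1},\beta,\beta^{-1}$ commute pairwise; this is immediate from $\alpha\beta=\beta\alpha$, which propagates to all inverses. With this in place, Theorem \ref{tw} applies with $\alpha'=\alpha^{-1}$ and $\beta'=\beta^{-1}$, producing the BiHom-associative dialgebra
$$D_{(\alpha^{-1},\beta^{-1})}=\bigl(D,\ \dashv\circ(\alpha^{-1}\otimes\beta^{-1}),\ \vdash\circ(\alpha^{-1}\otimes\beta^{-1}),\ \alpha\alpha^{-1},\ \beta\beta^{-1}\bigr).$$

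Finally, since $\alpha\alpha^{-1}=\beta\beta^{-1}=\mathrm{id}_D$, the two structure maps of $D_{(\alpha^{-1},\beta^{-1})}$ are the identity, and equations (\ref{eq4})--(\ref{eq8}) with $\alpha=\beta=\mathrm{id}_D$ are precisely Loday's five diassociativity axioms, as noted in the first Example after Definition \ref{dia}; this gives the claim. I do not expect a genuine obstacle: the only place the regularity hypothesis is used is in forming $\alpha^{-1},\beta^{-1}$ and checking they are morphisms, so the whole content is the reduction to Theorem \ref{tw}. As a cross-check one could instead verify, say, the analogue of (\ref{eq4}) by hand: expanding $(x\triangleleft y)\triangleleft z$ and $x\triangleleft(y\triangleleft z)$ through the definitions and pushing $\alpha^{-1},\beta^{-1}$ across the inner products via the morphism property, both sides collapse to $\alpha^{-1}(x)\dashv(\alpha^{-1}\beta^{-1}(y)\dashv\beta^{-2}(z))$ after a single application of (\ref{eq4}).
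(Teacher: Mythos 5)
Your proposal is correct and follows exactly the paper's route: the paper's own proof is the one-line observation that one takes $\alpha'=\alpha^{-1}$ and $\beta'=\beta^{-1}$ in Theorem \ref{tw}. Your additional verifications (that the inverses are again morphisms, that the four maps commute pairwise, and that the resulting identity structure maps reduce the axioms to Loday's) are exactly the details the paper leaves implicit.
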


\begin{proof}
 We have to take $\alpha'=\alpha^{-1}$ and $\beta'=\beta^{-1}$ in Theorem \ref{tw}.
\end{proof}

 \begin{corollary}
 Let $(D, \dashv, \vdash)$ be an associative dialgebra and $\alpha : D\rightarrow D$ and $\beta : D\rightarrow D$ a pair of
 commuting endomorphisms of $D$. Then
$$(D, \dashv\circ(\alpha\otimes\beta), \vdash\circ(\alpha\otimes\beta), \alpha, \beta)$$
is a  BiHom-associative dialgebra.
\end{corollary}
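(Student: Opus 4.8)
The plan is to obtain this as a specialization of Theorem~\ref{tw}, in exactly the spirit of the preceding corollaries. The key observation is that an associative dialgebra $(D,\dashv,\vdash)$ is precisely the BiHom-associative dialgebra $(D,\dashv,\vdash,\mathrm{id}_D,\mathrm{id}_D)$ of Definition~\ref{dia}: taking $\alpha=\beta=\mathrm{id}_D$, the identity $\alpha\circ\beta=\beta\circ\alpha$ is automatic, and the five twisted axioms (\ref{eq4})--(\ref{eq8}) collapse to the usual five Loday identities of a diassociative algebra. So the starting data can be viewed as an instance of the hypothesis of Theorem~\ref{tw}.

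First I would verify that $\alpha$ and $\beta$ qualify as morphisms of this base BiHom-associative dialgebra. Since both structure maps of the base are $\mathrm{id}_D$, the compatibility conditions $\mathrm{id}_D\circ f=f\circ\mathrm{id}_D$ hold trivially, so the morphism requirement reduces to the product-preservation conditions $f(x\dashv y)=f(x)\dashv f(y)$ and $f(x\vdash y)=f(x)\vdash f(y)$; this is exactly the assumption that $\alpha$ and $\beta$ are endomorphisms of $D$. Next I would check the pairwise-commutation requirement of Theorem~\ref{tw}: the four maps involved are the base maps $\mathrm{id}_D,\mathrm{id}_D$ together with the twisting maps $\alpha,\beta$, and since $\mathrm{id}_D$ commutes with everything and $\alpha\circ\beta=\beta\circ\alpha$ by hypothesis, every pair commutes.

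With all hypotheses verified, I would apply Theorem~\ref{tw} over the base $(D,\dashv,\vdash,\mathrm{id}_D,\mathrm{id}_D)$ with $\alpha'=\alpha$ and $\beta'=\beta$. The theorem then yields that
$$\bigl(D,\ \dashv\circ(\alpha\otimes\beta),\ \vdash\circ(\alpha\otimes\beta),\ \mathrm{id}_D\circ\alpha,\ \mathrm{id}_D\circ\beta\bigr)=\bigl(D,\ \dashv\circ(\alpha\otimes\beta),\ \vdash\circ(\alpha\otimes\beta),\ \alpha,\ \beta\bigr)$$
is a BiHom-associative dialgebra, which is precisely the assertion.

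I do not expect a genuine obstacle here; the only points demanding care are bookkeeping ones, namely matching the notation (the corollary's $\alpha,\beta$ play the role of $\alpha',\beta'$ in the theorem, while the theorem's base maps are specialized to identities) and confirming that an endomorphism of the diassociative structure coincides with a morphism once the structure maps are identities. For a self-contained alternative, one can instead verify any single axiom directly: using that $\alpha,\beta$ are multiplicative and $\alpha\beta=\beta\alpha$, both sides of (\ref{eq5}) reduce to the diassociative identity applied to the triple $\alpha^2(x),\alpha\beta(y),\beta^2(z)$, and the remaining four axioms follow by the identical computation.
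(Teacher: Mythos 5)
Your proof is correct and is exactly the paper's argument: the paper likewise views $(D,\dashv,\vdash)$ as the BiHom-associative dialgebra with identity structure maps and applies Theorem~\ref{tw} with $\alpha'=\alpha$, $\beta'=\beta$. You merely spell out the verification of the hypotheses (morphism and pairwise commutation) that the paper leaves implicit.
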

\begin{proof}
 We have to take $\alpha=\beta=Id_D$ and replace $\alpha'$ by $\alpha$, and $\beta'$ by $\beta$ in Theorem \ref{tw}.
\end{proof}

\begin{definition}
Let $(D, \dashv, \vdash, \alpha, \beta)$ be a  BiHom-associative dialgebra. For any integers $k, l$,  an even linear map $\theta : D\rightarrow D$
 is called an element of $(\alpha^k, \beta^l)$-centroid on $D$ if
\begin{eqnarray}
\alpha\circ\theta&=&\theta\circ\alpha,\quad \beta\circ\theta=\theta\circ\beta,\\
 \theta(x)\dashv \alpha^k\beta^l(y)&=&\theta(x)\dashv\theta(y)=\alpha^k\beta^l(x)\dashv \theta(y),\\ 
 \theta(x)\vdash \alpha^k\beta^l(y)&=&\theta(x)\vdash \theta(y)=\alpha^k\beta^l(x)\vdash \theta(y),
\end{eqnarray}
for all $x, y\in D$.
 \end{definition}
The set of elements of centroid is called centroid.

\begin{proposition}\label{t1}
 Let $(A, \dashv, \vdash, \alpha, \beta)$ a BiHom-associative dialgebra and $\phi : A\rightarrow A$ and $\psi : A\rightarrow A$ be a paire of commuting
elements of cenroid. Let us defined
$$x\triangleleft y:=\phi(x)\dashv y \quad\mbox{and}\quad x\triangleright y:=\psi(x)\vdash y.$$
Then, $(A, \triangleleft, \triangleright, \alpha, \beta)$ is a BiHom-associative dialgebra if and only if\\
$Im(\phi-\psi)\in Z_{\dashv}(A):=\{x\in A/x\dashv y=0, \forall y\in A\}
 \;\mbox{and}\; 
Im(\phi-\psi)\in Z_{\vdash}(A):=\{x\in A/y\vdash x=0, \forall y\in A\}$.
\end{proposition}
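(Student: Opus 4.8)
The plan is to check, for the two new products $\triangleleft,\triangleright$, each of the five identities \eqref{eq4}--\eqref{eq8} of Definition \ref{dia}, keeping the same pair $\alpha,\beta$ (so that $\alpha\beta=\beta\alpha$ is inherited for free). First I would record the only facts about the centroid that the computation needs. Since the defining relations of the centroid are linear in $\theta$, the difference $\phi-\psi$ is again a commuting centroid element; and reading those relations as transfer rules, every $\theta\in\{\phi,\psi,\phi-\psi\}$ satisfies $\theta(a)\dashv b=a\dashv\theta(b)=\theta(a\dashv b)$ and $\theta(a)\vdash b=a\vdash\theta(b)=\theta(a\vdash b)$, together with $\theta\alpha=\alpha\theta$ and $\theta\beta=\beta\theta$. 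These are exactly what lets one slide a twisting operator onto whichever tensor factor is convenient.

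The three ``compatible'' identities \eqref{eq4}, \eqref{eq6}, \eqref{eq8} I expect to hold with no extra hypothesis, by the uniform mechanism of transferring the operator onto the correct factor and then invoking the matching original identity. For \eqref{eq4}, for instance,
\[
(x\triangleleft y)\triangleleft\beta(z)=\phi\big(\phi(x)\dashv y\big)\dashv\beta(z)=\big(\phi(x)\dashv\phi(y)\big)\dashv\beta(z),
\]
which by \eqref{eq4} equals $\alpha(\phi(x))\dashv(\phi(y)\dashv z)=\phi(\alpha(x))\dashv(y\triangleleft z)=\alpha(x)\triangleleft(y\triangleleft z)$. The analogous one-line reductions settle \eqref{eq8} via $\psi$ and the original \eqref{eq8}, and the genuinely mixed \eqref{eq6}: there the two occurrences of $\phi$ and $\psi$ already sit on \emph{different} factors, so after transfer they land on compatible slots and the original \eqref{eq6} closes the identity with no condition.

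All the content therefore concentrates in \eqref{eq5} and \eqref{eq7}, precisely the places where a $\phi$ on one side must be matched against a $\psi$ on the other. Running the same transfer-and-substitute procedure (and using $\alpha(u)\dashv(v\vdash w)=\alpha(u)\dashv(v\dashv w)$, which follows by comparing the original \eqref{eq4} and \eqref{eq5}), I would reduce \eqref{eq5} to the single requirement
\[
\phi(\alpha(x))\dashv\big((\phi-\psi)(y)\dashv z\big)=0\qquad(\forall\,x,y,z),
\]
so the obstruction is carried by $(\phi-\psi)(y)$ sitting as a left $\dashv$-factor; if $\mathrm{Im}(\phi-\psi)\subseteq Z_{\dashv}(A)$ this factor kills the inner product and \eqref{eq5} holds. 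Dually, \eqref{eq7} reduces to a term carrying $(\phi-\psi)$ as a $\vdash$-factor; here I would use that $\phi-\psi$ is itself in the centroid to reposition it as a \emph{right} $\vdash$-factor, $\alpha(x)\vdash\big((\phi-\psi)(\psi(y)\vdash z)\big)=0$, which vanishes exactly when $\mathrm{Im}(\phi-\psi)\subseteq Z_{\vdash}(A)$. Together the two identities produce the two stated membership conditions, giving the ``if'' direction cleanly.

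The hard part will be the converse implication. Both reductions leave the offending factor $(\phi-\psi)(-)$ wedged inside an outer product ($\phi(\alpha(x))\dashv(-)$ for \eqref{eq5}, $\alpha(x)\vdash(-)$ for \eqref{eq7}), so passing from ``the whole expression vanishes for all $x,y,z$'' to ``$(\phi-\psi)(y)\in Z_{\dashv}(A)$'' (and its $\vdash$-analogue) means stripping that outer factor, which is not a formal step. I expect this to require a non-degeneracy or surjectivity input (for instance surjectivity of $\alpha$ and $\phi$, or that the relevant left/right multiplications separate points), and it is here that I would either add such a hypothesis or argue that vanishing for \emph{all} arguments forces the inner factor into the annihilator. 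Modulo this point, assembling the three unconditional identities with the two conditional ones yields the claimed equivalence.
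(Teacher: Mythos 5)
Your overall route is the paper's route: only the coherence of $\alpha\beta=\beta\alpha$ and the five identities (\ref{eq4})--(\ref{eq8}) need checking, the three ``unmixed'' ones hold unconditionally, and all the content sits in (\ref{eq5}) and (\ref{eq7}), each of which reduces to a single residual term carrying $\phi-\psi$. Your residual terms are in fact equal to the paper's after a centroid transfer: the paper leaves the (\ref{eq5}) obstruction as $(\phi(x)\dashv y)\dashv\beta(\phi-\psi)(z)=\alpha\phi(x)\dashv\big(y\vdash(\phi-\psi)(z)\big)$, read off against $Z_{\vdash}$, and the (\ref{eq7}) obstruction as $\big[(\phi-\psi)(x)\dashv y\big]\vdash\beta\psi(z)$, read off against $Z_{\dashv}$ --- i.e.\ the opposite assignment to yours --- but since both memberships are hypotheses and each obstruction can be rewritten in either form, your ``if'' direction is sound. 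Two cautions. First, the paper's centroid definition grants only $\theta(a)\ast b=a\ast\theta(b)=\theta(a)\ast\theta(b)$ for $\ast\in\{\dashv,\vdash\}$; it says nothing about $\theta(a\ast b)$, so the step $\phi(\phi(x)\dashv y)=\phi(x)\dashv\phi(y)$ in your (\ref{eq4}) computation is not available as stated --- instead slide the outer $\phi$ across the outermost product, $\phi(u)\dashv\beta(z)=u\dashv\beta\phi(z)$, and finish with (\ref{eq4}) as before. Relatedly, $\phi-\psi$ satisfies only the \emph{linear} transfer relation $(\phi-\psi)(a)\ast b=a\ast(\phi-\psi)(b)$ (the relation $\theta(a)\ast b=\theta(a)\ast\theta(b)$ is quadratic in $\theta$ and does not pass to differences); fortunately the linear part is all your repositioning argument uses. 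Second, your concern about the converse is legitimate, but it is not a point on which the paper does better: its proof ends with ``a study of cancellation of the two equalities allows to conclude'' and never strips the outer factors $\alpha\phi(x)\dashv(-)$ and $(-)\vdash\beta\psi(z)$ either, so the ``only if'' direction genuinely needs the extra non-degeneracy input you identify (or the statement should be read as asserting that the displayed memberships are exactly what makes the two residual terms vanish identically). Net: your forward direction matches the paper after the small repair above, and the gap you flag in the converse is a gap the paper shares.
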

\begin{proof}
 We only prove axioms (\ref{eq5}) and (\ref{eq7}), the three other comes from BiHom-associativity.
So for any $x, y, z\in A$,
\begin{eqnarray}
 (x\triangleleft y)\triangleleft\beta(z)-\alpha(x)\triangleleft(y\triangleright z)
&=&(\phi(x)\dashv y)\dashv\phi\beta(z)-\phi\alpha(x)\dashv(y\vdash\psi(z))\nonumber\\
&=&(\phi(x)\dashv y)\dashv\beta\phi(z)-\alpha\phi(x)\dashv(y\vdash\psi(z))\nonumber\\
&=&(\phi(x)\dashv y)\vdash\beta\phi(z)-(\phi(x)\dashv y)\vdash\beta\psi(z))\nonumber\\
&=&(\phi(x)\dashv y)\vdash\beta(\phi-\psi)(z)\nonumber\\
&=&\alpha\phi(x)\dashv (y\vdash(\phi-\psi)(z))\nonumber.
\end{eqnarray}
and
\begin{eqnarray}
 (x\triangleleft y)\triangleright \beta(z)-\alpha(x)\triangleright(y\triangleright z)
&=&(\phi(x)\dashv y)\vdash\beta\psi(z)-\psi\alpha(x)\vdash(y\vdash\psi(z))\nonumber\\
&=&(\phi(x)\dashv y)\vdash\beta\psi(z)-\alpha\psi(x)\vdash(y\psi(z))\nonumber\\
&=&(\phi(x)\dashv y)\vdash\beta\psi(z)-(\psi(x)\dashv y)\vdash\beta\psi(z)\nonumber\\
&=&[(\phi(x)-\psi(x))\dashv y]\vdash\beta\psi(z)\nonumber.
\end{eqnarray}
A study of cancellation of the two equalities allows to conclude.
\end{proof}

\begin{proposition}\label{t2}
 Let $(A, \cdot, \alpha, \beta)$ be a BiHom-associative algebra, and $\phi : A\rightarrow A$ and $\psi : A\rightarrow A$ be a paire of commuting
elements of cenroid. Let us defined
$$x\dashv y:=\phi(x)\cdot y \quad\mbox{and}\quad x\vdash y:=\psi(x)\cdot y.$$
Then, $(A, \dashv, \vdash, \alpha, \beta)$ is a BiHom-associative dialgebra if and only if $Im(\phi-\psi)$ is contained in the 
set of isotropic vectors.
\end{proposition}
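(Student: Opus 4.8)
The plan is to verify the five defining identities (\ref{eq4})--(\ref{eq8}) directly for the pair of products built from the single BiHom-associative product, and to read off from the computation exactly which identities force a constraint on $\phi-\psi$. First I would substitute $x\dashv y=\phi(x)\cdot y$ and $x\vdash y=\psi(x)\cdot y$ into each identity, then transport every occurrence of $\phi$ and $\psi$ across the product using the centroid relations (they commute with $\alpha,\beta$, commute with one another, and move through $\cdot$), and finally re-bracket using the BiHom-associativity of $\cdot$, namely $(a\cdot b)\cdot\beta(c)=\alpha(a)\cdot(b\cdot c)$. This is the same mechanism as in the proof of Proposition~\ref{t1}, only now both $\dashv$ and $\vdash$ are manufactured from one associative product.

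A useful bookkeeping heuristic organizes the verification: each side of an identity carries a fixed number of applications of $\phi$ and of $\psi$, and after transporting these operators to the front, each side collapses to a single product prefixed by a word in $\phi,\psi$ (together with structure-map twists). An identity then holds automatically exactly when the two sides carry the \emph{same} operator word. This happens for the three ``balanced'' axioms: the $\dashv\dashv$ axiom (\ref{eq4}) and the $\vdash\vdash$ axiom (\ref{eq8}) are homogeneous, and the mixed axiom (\ref{eq6}) uses $\phi$ once and $\psi$ once on each side; all three therefore reduce to an honest equality and impose no condition on $\phi,\psi$.

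The content lives in the two remaining axioms (\ref{eq5}) and (\ref{eq7}), where the operator counts on the two sides differ by a single swap $\phi\leftrightarrow\psi$. Computing the difference of the two sides as in Proposition~\ref{t1}, each difference collapses to a single product whose left factor lies in $\mathrm{Im}(\phi-\psi)$; schematically they are of the form $\bigl[(\phi-\psi)\phi\alpha(x)\bigr]\cdot(y\cdot z)$ and $\bigl[(\phi-\psi)\psi\alpha(x)\bigr]\cdot(y\cdot z)$ respectively (up to the twists carried by the centroid relations). Both axioms therefore hold for all $x,y,z$ precisely when every such product vanishes, which is exactly the requirement that $\mathrm{Im}(\phi-\psi)$ consist of isotropic vectors. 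The backward implication is then immediate, since isotropy kills the two surviving differences and the other three axioms already hold; and the forward implication follows by letting $x,y,z$ range freely so that the two vanishing conditions together say precisely that $\mathrm{Im}(\phi-\psi)$ is isotropic.

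The main obstacle I anticipate is not conceptual but a matter of careful bookkeeping: because the centroid relations carry $\alpha^{k}\beta^{l}$ twists, one must apply them in the correct order so that the two sides of (\ref{eq4}), (\ref{eq6}) and (\ref{eq8}) land on \emph{identical} normal forms while, in (\ref{eq5}) and (\ref{eq7}), the residual $\phi-\psi$ stays isolated on a single factor. The one delicate point to check is that, in the forward direction, the two vanishing conditions genuinely recover isotropy of the \emph{full} image $\mathrm{Im}(\phi-\psi)$ rather than of a proper subspace; once the normal forms are computed correctly this identification is direct.
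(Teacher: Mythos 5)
Your proposal follows the paper's proof essentially verbatim: only axioms (\ref{eq5}) and (\ref{eq7}) carry content (the other three follow from BiHom-associativity of $\cdot$ after shuttling $\phi,\psi$ with the centroid relations), and each of those two collapses to a single residual term carrying one factor of $\phi-\psi$, whose vanishing is then identified with the isotropy of $\mathrm{Im}(\phi-\psi)$. The one bookkeeping difference is that the paper's centroid axioms only license moving $\phi$ \emph{between} the two arguments of a product ($\phi(u)\cdot v=u\cdot\phi(v)$), not extracting it from one ($\phi(u\cdot v)=\phi(u)\cdot v$ is not assumed), so the paper's residuals come out as $(\phi(x)\cdot y)\cdot\beta\bigl((\phi-\psi)(z)\bigr)$ and $\bigl((\phi-\psi)(x)\cdot y\bigr)\cdot\beta\psi(z)$ --- one with $\phi-\psi$ in the right slot and one in the left slot, which is what makes the condition the two-sided ``isotropy'' --- rather than your fully left-normalized forms; since $(\phi-\psi)(u)\cdot v=u\cdot(\phi-\psi)(v)$ holds identically, this does not affect the conclusion.
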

\begin{proof}
 We only prove axioms (\ref{eq5}) and (\ref{eq7}), the three other comes from BiHom-associativity.
So for any $x, y, z\in A$,
\begin{eqnarray}
 (x\dashv y)\dashv\beta(z)-\alpha(x)\dashv(y\vdash z)
&=&(\phi(x)y)\phi\beta(z)-\phi\alpha(x)(y\psi(z))\nonumber\\
&=&(\phi(x)y)\beta\phi(z)-\alpha\phi(x)(y\psi(z))\nonumber\\
&=&(\phi(x)y)\beta\phi(z)-(\phi(x)y)\beta\psi(z))\nonumber\\
&=&(\phi(x)y)\beta(\phi-\psi)(z)\nonumber\\
&=&\alpha\phi(x)(y(\phi-\psi)(z))\nonumber.
\end{eqnarray}
and
\begin{eqnarray}
 (x\dashv y)\vdash \beta(z)-\alpha(x)\vdash(y\vdash z)
&=&(\phi(x)y)\beta\psi(z)-\psi\alpha(x)(y\psi(z))\nonumber\\
&=&(\phi(x)y)\beta\psi(z)-\alpha\psi(x)(y\psi(z))\nonumber\\
&=&(\phi(x)y)\beta\psi(z)-(\psi(x)y)\beta\psi(z)\nonumber\\
&=&[(\phi(x)-\psi(x))y]\beta\psi(z)\nonumber.
\end{eqnarray}
A study of cancellation of the two equalities allow to conclude.
\end{proof}

\begin{remark}
 Proposition \ref{t2} may be seen as a consequence of Proposition \ref{t1}.
\end{remark}

\begin{proposition}
 Let $(A, \cdot, \alpha, \beta)$ be a BiHom-associative algebra and $(M, \ast_L, \ast_R, \alpha_M, \beta_M)$ an $A$-BiHom-bimodule i.e. $M$ is a
 vector space, $\alpha_M :M\rightarrow M$ and $\beta_M : M\rightarrow M$ are two linear maps, and $\ast_L : A\rightarrow M$ and
 $\ast_R : M\rightarrow A$ two bilinear maps such that
\begin{eqnarray}
 \alpha(x)\ast_L(y\ast_L m)&=&(x\cdot y)\ast_L\beta_M(m)\\
\alpha(x)\ast_L(m\ast_R y)&=&(x\ast_L m)\ast_R\beta(y)\label{m2}\\
\alpha_M(m)\ast_R(x\cdot y)&=&(m\ast_R x)\ast_R\beta(y).
\end{eqnarray}
Suppose that $f :M\rightarrow A$ is a morphism of $A$-BiHom-bimodule i.e. $f$ is linear such that $\alpha\circ f=f\circ\alpha_M$,
$\beta\circ f=f\circ\beta_M$ and
\begin{eqnarray}
 f(x\ast_L m)&=&x\cdot f(m)\\
f(m\ast_R x)&=&f(m)\cdot x.\\
\end{eqnarray}
Then, $(M, \triangleleft, \triangleright, \alpha_M, \beta_M)$ is a BiHom-associative dialgebra with
$$m\triangleleft n=f(m)\ast_R n\quad\mbox{and}\quad m\triangleright n=m\ast_Rf(n),$$
for all $m, n\in M$.
\end{proposition}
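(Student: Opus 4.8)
The plan is to verify, for the two products $\triangleleft$ and $\triangleright$ carried by $M$, the first BiHom axiom $\alpha_M\circ\beta_M=\beta_M\circ\alpha_M$ together with the five diassociativity identities \eqref{eq4}--\eqref{eq8} of Definition~\ref{dia}. The commutation of the structure maps is part of the data of an $A$-BiHom-bimodule, so it requires nothing; all the work is in \eqref{eq4}--\eqref{eq8}. I read the two operations as a right action and a left action of $A$ on $M$ twisted by $f$, namely $m\triangleleft n=m\ast_R f(n)$ and $m\triangleright n=f(m)\ast_L n$ (with $\triangleleft$ in the role of $\dashv$); this is the assignment under which both operations land in $M$ and the axioms hold. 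The single mechanism behind every computation is that $f$ is a morphism of BiHom-bimodules: applying $f$ to either product collapses it to a genuine product in $A$, because $f(m\ast_R x)=f(m)\cdot x$ and $f(x\ast_L m)=x\cdot f(m)$, while $\alpha\circ f=f\circ\alpha_M$ and $\beta\circ f=f\circ\beta_M$ let me trade the structure maps of $M$ for those of $A$ underneath $f$.

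With these facts the five axioms split into three cases, each matched to one bimodule relation. First I would handle \eqref{eq4} and \eqref{eq5}: both share the left-hand side $(m\triangleleft n)\triangleleft\beta_M(p)=(m\ast_R f(n))\ast_R\beta(f(p))$, which by the third bimodule axiom equals $\alpha_M(m)\ast_R(f(n)\cdot f(p))$. Expanding the two right-hand sides and collapsing the inner product through $f$ (via $f(m\ast_R x)=f(m)\cdot x$ for \eqref{eq4}, and $f(x\ast_L m)=x\cdot f(m)$ for \eqref{eq5}) yields the same expression, closing both identities. Dually, \eqref{eq7} and \eqref{eq8} share the right-hand side $\alpha_M(m)\triangleright(n\triangleright p)=\alpha(f(m))\ast_L(f(n)\ast_L p)$, which by the first bimodule axiom equals $(f(m)\cdot f(n))\ast_L\beta_M(p)$; the two left-hand sides reduce to this same normal form after pushing $f$ through the outer product. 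Finally, the mixed axiom \eqref{eq6} is precisely the content of the middle bimodule relation \eqref{m2}: using $\alpha\circ f=f\circ\alpha_M$ and $\beta\circ f=f\circ\beta_M$, both sides reduce to $\alpha(f(m))\ast_L(n\ast_R f(p))$.

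I do not anticipate a genuine obstacle; the argument is a bookkeeping exercise that routes each diassociativity axiom to its matching bimodule condition. The one point demanding care is type discipline: since the two products are not symmetric---one inserts $f$ on the right through $\ast_R$, the other on the left through $\ast_L$---one must keep track of the side on which $f$ sits before invoking the correct morphism identity, and must apply $\alpha\circ f=f\circ\alpha_M$ and $\beta\circ f=f\circ\beta_M$ at the right step to realign the twisting maps. Once the three reductions are carried out symmetrically, equality of the normal forms settles all five axioms and completes the proof.
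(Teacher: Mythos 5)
Your proposal is correct and follows essentially the same route as the paper: each diassociativity axiom is reduced, via the morphism identities $f(x\ast_L m)=x\cdot f(m)$, $f(m\ast_R x)=f(m)\cdot x$ and the intertwining of $f$ with the structure maps, to one of the three bimodule relations (the paper writes out only the mixed axiom, handled through the middle relation \eqref{m2}, exactly as in your third reduction). You also rightly read the statement's $m\triangleleft n=f(m)\ast_R n$ as $f(m)\ast_L n$, which is the typo the paper's own proof silently corrects.
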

\begin{proof}
 We only prove axiom (\ref{eq8}), the other being proved similarly. For any $x, y, z\in A$,
\begin{eqnarray}
 (m\triangleleft n)\triangleright \beta_M(p)
&=&(f(m)\ast_L n)\ast_R f\beta_M(p)\nonumber\\
&=&(f(m)\ast_L n)\ast_R \beta f(p)\nonumber.
\end{eqnarray}
By (\ref{m2}),
\begin{eqnarray}
 (m\triangleleft n)\triangleright \beta_M(p)
&=&\alpha f(m)\ast_L (n\ast_R f(p))\nonumber\\
&=& f\alpha_M(m)\ast_L (n\triangleright p)\nonumber\\
&=& \alpha_M(m)\triangleleft(n\triangleright p)\nonumber.
\end{eqnarray}
This completes the proof.
\end{proof}
\begin{remark}
 Any $(\alpha^0, \beta^0)$-element of centroid of a BiHom-associative algebra is a morphism of BiHom-bimodule. 
\end{remark}
Thanks to the above remark, we have what follows :
\begin{corollary}
 Let $(A, \cdot, \alpha, \beta)$ be a BiHom-associative algebra and let $\theta$ be an element of cenroid on $A$. Then, 
$(A, \triangleleft, \triangleright, \alpha, \beta)$ is a BiHom-associative dialgebra with 
$$x\triangleleft y=\theta(x)\cdot y\quad\mbox{and}\quad x\triangleright y=x\cdot \theta(y),$$
for any $x, y\in A.$
\end{corollary}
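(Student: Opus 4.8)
The plan is to obtain this corollary as an immediate specialization of the preceding Proposition on bimodule morphisms, applied to the regular bimodule. First I would equip $A$ with the structure of an $A$-BiHom-bimodule over itself: take $M=A$, $\alpha_M=\alpha$, $\beta_M=\beta$, and let both the left and right actions be the multiplication, i.e. $x\ast_L m=x\cdot m$ and $m\ast_R x=m\cdot x$. With these choices the three bimodule axioms collapse exactly onto the three BiHom-associativity identities already satisfied by $(A,\cdot,\alpha,\beta)$, so $A$ is indeed a BiHom-bimodule over itself.

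Next I would invoke the preceding Remark: since $\theta$ is an $(\alpha^0,\beta^0)$-element of the centroid, it is a morphism of $A$-BiHom-bimodule. Concretely this says that $f:=\theta$ satisfies $\alpha\circ\theta=\theta\circ\alpha$, $\beta\circ\theta=\theta\circ\beta$, together with $\theta(x\cdot m)=x\cdot\theta(m)$ and $\theta(m\cdot x)=\theta(m)\cdot x$, which are precisely the hypotheses required of $f$ in the Proposition. Applying that Proposition with $M=A$ and $f=\theta$ then yields that $(A,\triangleleft,\triangleright,\alpha,\beta)$ is a BiHom-associative dialgebra, with $m\triangleleft n=f(m)\ast_R n=\theta(m)\cdot n$ and $m\triangleright n=m\ast_R f(n)=m\cdot\theta(n)$. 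These are exactly the two operations appearing in the statement, so the claim follows.

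Alternatively, one can check the five dialgebra axioms directly, using only the centroid identity $\theta(x\cdot y)=\theta(x)\cdot y=x\cdot\theta(y)$ and the commutation of $\theta$ with $\alpha$ and $\beta$. For instance, for (\ref{eq4}) one computes $(x\triangleleft y)\triangleleft\beta(z)=\theta(\theta(x)\cdot y)\cdot\beta(z)=(\theta(x)\cdot\theta(y))\cdot\beta(z)$, while $\alpha(x)\triangleleft(y\triangleleft z)=\alpha(\theta(x))\cdot(\theta(y)\cdot z)$, and these agree by the BiHom-associativity of $\cdot$ with $u=\theta(x)$, $v=\theta(y)$, $w=z$. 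The remaining axioms (\ref{eq5})--(\ref{eq8}) follow the same pattern, the only work being to push $\theta$ into the correct factor via the centroid identity so that each side reduces to a single instance of BiHom-associativity of the original product. There is no genuine obstacle here; the one point requiring care is the bookkeeping that identifies the two derived operations with the claimed $\triangleleft$ and $\triangleright$ and correctly matches the structure maps $\alpha_M,\beta_M$ of the regular bimodule with $\alpha,\beta$.
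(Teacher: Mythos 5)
Your proposal is correct and follows essentially the same route as the paper: the corollary is obtained there precisely by combining the remark that an $(\alpha^0,\beta^0)$-element of the centroid is a morphism of BiHom-bimodules with the preceding proposition applied to the regular bimodule $M=A$, $f=\theta$. Your additional direct verification of the five axioms is a harmless (and correct) supplement, not a different method.
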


\begin{proposition}
 Let $(D, \dashv, \vdash, \alpha, \beta)$ be a BiHom-associative dialgebra and $R: D\rightarrow D$ a Rota-Baxter operator of weight $0$ on $D$ i.e.
 $R$ is linear and $\alpha\circ R=R\circ\alpha$ , $\beta\circ R=R\circ\beta$, and
\begin{eqnarray}
 R(x)\dashv R(y)&=&R(R(x)\dashv y+x\dashv R(y))\\
R(x)\vdash R(y)&=&R(R(x)\vdash y+x\vdash R(y))
\end{eqnarray}
Then, $(D, \triangleleft, \triangleright, \alpha, \beta)$ is also a BiHom-associative algebra with
\begin{eqnarray}
 x\triangleleft y=R(x)\dashv y+x\dashv R(y),\\
x\triangleright y=R(x)\vdash y+x\vdash R(y),
\end{eqnarray}
for all $x, y\in D$.
\end{proposition}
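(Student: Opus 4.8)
The plan is to verify the five BiHom-diassociativity axioms (\ref{eq4})--(\ref{eq8}) for the twisted products $\triangleleft$ and $\triangleright$; the compatibility $\alpha\circ\beta=\beta\circ\alpha$ is inherited unchanged from $D$, so nothing is needed there. The engine of the whole computation is the observation that the two Rota-Baxter relations are precisely the statements $R(x\triangleleft y)=R(x)\dashv R(y)$ and $R(x\triangleright y)=R(x)\vdash R(y)$. In other words, applying $R$ to one of the new products collapses it into a single \emph{old} product of the $R$-images. I would record this identity as the first step, since it is exactly what lets the argument close.

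With that in hand, each axiom is verified by the same three-term bookkeeping. Take for instance (\ref{eq4}). Expanding the outer $\triangleleft$ gives $(x\triangleleft y)\triangleleft\beta(z)=R(x\triangleleft y)\dashv\beta(z)+(x\triangleleft y)\dashv R\beta(z)$; using the Rota-Baxter identity on the first summand and distributing the second produces exactly the three terms $(R(x)\dashv R(y))\dashv\beta(z)$, $(R(x)\dashv y)\dashv\beta R(z)$ and $(x\dashv R(y))\dashv\beta R(z)$, after commuting $R$ past $\beta$. Applying the original relation (\ref{eq4}) to each of these, together with $R\alpha=\alpha R$, rewrites them as $\alpha R(x)\dashv(R(y)\dashv z)$, $\alpha R(x)\dashv(y\dashv R(z))$ and $\alpha(x)\dashv(R(y)\dashv R(z))$ respectively. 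On the other side, $\alpha(x)\triangleleft(y\triangleleft z)$ expands---again via $R(y\triangleleft z)=R(y)\dashv R(z)$---into precisely the same three summands, so the two sides coincide.

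The remaining axioms follow the identical pattern, the only change being which of the original relations (\ref{eq5})--(\ref{eq8}), and which of $R(x\triangleleft y)$ or $R(x\triangleright y)$, is invoked on each of the three pieces. For a mixed axiom such as (\ref{eq6}) the outer product is $\triangleright$ while an inner product is $\triangleleft$, so one collapses the first term with the $\vdash$-Rota-Baxter identity and matches the three pieces against (\ref{eq6}); the cancellation mechanism is unchanged. I expect no conceptual obstacle here: the real work is purely the bookkeeping of keeping the six distinct term-types aligned and invoking the correct companion axiom for each, combined with the systematic use of $R\alpha=\alpha R$ and $R\beta=\beta R$ to slide the twisting maps into position. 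As in Theorem \ref{tw}, it would suffice to present one representative axiom in full and leave the symmetric cases to the reader.
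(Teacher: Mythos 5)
Your proposal is correct and follows essentially the same route as the paper: both expand the outer twisted product into two pieces, collapse the $R(\cdot)$ piece via the Rota--Baxter identity (your observation $R(x\triangleleft y)=R(x)\dashv R(y)$, $R(x\triangleright y)=R(x)\vdash R(y)$), commute $R$ with $\alpha,\beta$, and match the resulting three terms on each side using the corresponding original axiom. The only difference is cosmetic --- the paper works out the mixed axiom $(x\triangleleft y)\triangleleft\beta(z)=\alpha(x)\triangleleft(y\triangleright z)$ while you illustrate with (\ref{eq4}) --- and your explicit isolation of the collapsing identity as a preliminary step makes the bookkeeping cleaner.
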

\begin{proof}
We only prove axiom (\ref{eq8}), the other being proved in a similar way. Thus, For any $x, y, z\in A$,
 \begin{eqnarray}
&&\qquad  (x\triangleleft y)\triangleleft \beta(z)-\alpha(x)\triangleleft(y\triangleright z)=\nonumber\\
&&=(x\dashv R(y)+R(x)\dashv y)\dashv R\beta(z)+R(R(x)\dashv y+x\dashv R(y))\dashv\beta(z)\nonumber\\
&&\quad-\alpha(x)\dashv R(R(y)\dashv z+y\dashv R(z))-R\alpha(x)\dashv(R(y)\vdash z+y\vdash R(z))\nonumber\\
&&=(x\dashv R(y))\dashv \beta R(z)+(R(x)\dashv y)\dashv\beta R(z)+(R(x)\dashv R(y))\dashv \beta(z)\nonumber\\
&&\quad-\alpha(x)\dashv(R(y)\vdash R(z))-\alpha R(x)\dashv(y\vdash R(z))\alpha R(x)\dashv(R(y)\vdash z).\nonumber
 \end{eqnarray}
The left hand side vanishes by axiom (\ref{eq8}). This ends the proof.
\end{proof}

\begin{corollary}
  Let $(D, \dashv, \vdash, \alpha, \beta)$ BiHom-associative dialgebra and $R: D\rightarrow D$ a Rota-Baxter operator of weight $0$ on $D$. Then,
$(D, \ast, \alpha, \beta)$ is a BiHom-associative algebra with $x\ast y=x\triangleleft y+x\triangleright y$.
\end{corollary}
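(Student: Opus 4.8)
The plan is to strip the problem down to the deformed products and then check a single identity. First I would invoke the preceding proposition, which already tells us that $(D, \triangleleft, \triangleright, \alpha, \beta)$ is itself a BiHom-associative dialgebra; this lets me discard the Rota--Baxter operator $R$ completely and argue purely from the five structure identities (\ref{eq4})--(\ref{eq8}), now read with $\triangleleft, \triangleright$ in place of $\dashv, \vdash$. Since the structure maps $\alpha, \beta$ are left unchanged by the construction, the commutation $\alpha\circ\beta=\beta\circ\alpha$ is inherited for free. Hence the whole task reduces to verifying the single BiHom-associativity axiom of a one-product BiHom-associative algebra, namely $(x\ast y)\ast\beta(z)=\alpha(x)\ast(y\ast z)$ for all $x,y,z\in D$, with $\ast=\triangleleft+\triangleright$.

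Next I would expand both sides by bilinearity of $\ast$. The left-hand side splits into the four monomials $(x\triangleleft y)\triangleleft\beta(z)$, $(x\triangleright y)\triangleleft\beta(z)$, $(x\triangleleft y)\triangleright\beta(z)$, $(x\triangleright y)\triangleright\beta(z)$, each of which is rewritten by exactly one of (\ref{eq4}), (\ref{eq6}), (\ref{eq7}), (\ref{eq8}) into a right-nested expression based at $\alpha(x)$. The right-hand side splits into $\alpha(x)\triangleleft(y\triangleleft z)$, $\alpha(x)\triangleleft(y\triangleright z)$, $\alpha(x)\triangleright(y\triangleleft z)$, $\alpha(x)\triangleright(y\triangleright z)$; here the role of (\ref{eq5}), used in tandem with (\ref{eq4}), is to collapse the term $\alpha(x)\triangleleft(y\triangleright z)$ onto $\alpha(x)\triangleleft(y\triangleleft z)$. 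I would then lay the two fully reduced expressions side by side and attempt to match them monomial by monomial, sorting the terms into the purely-$\triangleleft$, purely-$\triangleright$, and genuinely mixed types.

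The main obstacle I expect is precisely this final matching of the double products. After reduction, both sides contain the mixed term $\alpha(x)\triangleright(y\triangleleft z)$ with equal coefficient, so that one drops out; the delicate point is whether the surviving purely-left contribution $\alpha(x)\triangleleft(y\triangleleft z)$ and the surviving purely-right contribution $\alpha(x)\triangleright(y\triangleright z)$ can be made to balance, since they appear with the opposite asymmetry on the two sides. This is the heart of the matter, and it is where all five identities must be brought to bear in a coordinated way rather than any single associativity-type relation; I would organize the bookkeeping so that this comparison becomes a finite mechanical check, and once the two double products are reconciled, associativity of $\ast$ follows, giving together with the inherited commutation of $\alpha, \beta$ that $(D, \ast, \alpha, \beta)$ is a BiHom-associative algebra.
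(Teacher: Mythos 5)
Your opening reduction is where the argument breaks. Once you invoke the preceding proposition and ``discard the Rota--Baxter operator $R$ completely,'' you are committed to proving the following general statement: for \emph{every} BiHom-associative dialgebra, the sum of the two products is BiHom-associative. That statement is false. Carrying out your own expansion, axioms (\ref{eq4}), (\ref{eq6}), (\ref{eq7}), (\ref{eq8}) turn the left-hand side into $\alpha(x)\triangleleft(y\triangleleft z)+\alpha(x)\triangleright(y\triangleleft z)+2\,\alpha(x)\triangleright(y\triangleright z)$, while the right-hand side is $\alpha(x)\triangleleft(y\triangleleft z)+\alpha(x)\triangleleft(y\triangleright z)+\alpha(x)\triangleright(y\triangleleft z)+\alpha(x)\triangleright(y\triangleright z)$; after using (\ref{eq4}) together with (\ref{eq5}) to identify $\alpha(x)\triangleleft(y\triangleright z)$ with $\alpha(x)\triangleleft(y\triangleleft z)$, the difference of the two sides is exactly
\[
\alpha(x)\triangleright(y\triangleright z)-\alpha(x)\triangleleft(y\triangleleft z),
\]
which is the ``delicate point'' you flag and then pass over with ``once the two double products are reconciled.'' They cannot be reconciled: the five dialgebra axioms relate $(x\triangleleft y)\triangleright\beta(z)$ to $(x\triangleright y)\triangleright\beta(z)$ and $\alpha(x)\triangleleft(y\triangleleft z)$ to $\alpha(x)\triangleleft(y\triangleright z)$, but they never identify a right-nested pure-$\triangleright$ term with a right-nested pure-$\triangleleft$ term. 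A concrete counterexample to your reduced claim: take an augmented associative algebra $(A,\cdot,\epsilon)$ with $x\triangleleft y=x\,\epsilon(y)$, $x\triangleright y=\epsilon(x)\,y$, $\alpha=\beta=\mathrm{id}$; this is an associative dialgebra, yet $x\triangleright(y\triangleright z)=\epsilon(x)\epsilon(y)z$ and $x\triangleleft(y\triangleleft z)=x\,\epsilon(y)\epsilon(z)$ differ, so $\triangleleft+\triangleright$ is not associative.

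The upshot is that the Rota--Baxter operator cannot be thrown away: any viable proof must work at the level of the original products, using the weight-zero identity in the form $R(x\ast y)=R(x)\dashv R(y)+R(x)\vdash R(y)$ to rewrite $(x\ast y)\ast\beta(z)$ and $\alpha(x)\ast(y\ast z)$ before invoking the dialgebra axioms (\ref{eq4})--(\ref{eq8}), in the spirit of the classical fact that a weight-zero Rota--Baxter operator on an associative algebra produces an associative double product. (The paper itself states this corollary without proof, so there is nothing to compare your reduction against; but as written your argument proves a false general statement, and even the direct $R$-retaining expansion leaves terms of the shape $\alpha R(x)\vdash(R(y)\vdash z)-\alpha R(x)\dashv(R(y)\vdash z)$ unmatched, so the matching really does have to be carried out explicitly rather than asserted.)
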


\begin{corollary}
  Let $(D, \dashv, \vdash, \alpha, \beta)$ be a BiHom-associative dialgebra and $R: D\rightarrow D$ a Rota-Baxter operator of weight $0$ on $D$. Then,
$(D, [-, -], \alpha, \beta)$ is a BiHom-Lie algebra with $$[x, y]=x\ast y-\alpha^{-1}\beta(y)\ast\alpha\beta^{-1}(x),$$
with $x\ast y=x\triangleleft y+x\triangleright y$.
\end{corollary}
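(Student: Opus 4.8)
The plan is to reduce the statement to the passage from a BiHom-associative algebra to its associated BiHom-Lie algebra. By the preceding corollary, $(D, \ast, \alpha, \beta)$ with $x \ast y = x \triangleleft y + x \triangleright y$ is a BiHom-associative algebra, and since the bracket involves $\alpha^{-1}$ and $\beta^{-1}$ I would work under the standing assumption that $D$ is regular, so the twists $\alpha^{-1}\beta$ and $\alpha\beta^{-1}$ are well defined. It then suffices to verify the three defining axioms of a BiHom-Lie algebra for the twisted commutator
$$[x,y] = x \ast y - \alpha^{-1}\beta(y) \ast \alpha\beta^{-1}(x):$$
the commutativity $\alpha\beta = \beta\alpha$ (inherited from $D$), BiHom-skew-symmetry $[\beta(x),\alpha(y)] = -[\beta(y),\alpha(x)]$, and the BiHom-Jacobi identity.

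For skew-symmetry I would compute directly. Since $\alpha$ and $\beta$ commute and are bijective, one has $\alpha^{-1}\beta(\alpha(y)) = \beta(y)$ and $\alpha\beta^{-1}(\beta(x)) = \alpha(x)$, whence
$$[\beta(x),\alpha(y)] = \beta(x)\ast\alpha(y) - \beta(y)\ast\alpha(x).$$
The same computation with $x$ and $y$ interchanged yields $[\beta(y),\alpha(x)] = \beta(y)\ast\alpha(x) - \beta(x)\ast\alpha(y)$, so the two expressions differ only by an overall sign and $[\beta(x),\alpha(y)] + [\beta(y),\alpha(x)] = 0$.

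For the BiHom-Jacobi identity I would expand the cyclic sum $[\beta^2(x),[\beta(y),\alpha(z)]]$ together with its two cyclic permutations. Each inner bracket splits into two $\ast$-products, and each outer bracket then doubles these, producing four terms in every cyclic slot, twelve in total. I would rewrite each term in a common associated form using the BiHom-associativity of $\ast$, namely $\alpha(u)\ast(v\ast w) = (u\ast v)\ast\beta(w)$, together with the multiplicativity of $\alpha$ and $\beta$, so that the twisting maps $\alpha^{-1}\beta$ and $\alpha\beta^{-1}$ migrate consistently across each product. The twelve terms then cancel in pairs across the three cyclic contributions.

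The main obstacle is the bookkeeping in the Jacobi verification: tracking how the twists $\alpha^{-1}\beta$ and $\alpha\beta^{-1}$ act after each use of BiHom-associativity, and confirming that the re-associated terms match for cancellation across the three cyclic summands. This is the BiHom analogue of the classical fact that the commutator of an associative product obeys the Jacobi identity; the commuting and bijectivity of $\alpha$ and $\beta$ are exactly what let the twists slide past one another so that the twelve terms collapse to zero.
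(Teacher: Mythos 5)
Your proposal is correct and takes essentially the paper's intended route: the paper states this corollary without proof, the implicit argument being exactly your reduction — the preceding corollary makes $(D,\ast,\alpha,\beta)$ a (regular) BiHom-associative algebra, and Lemma~\ref{cll} then produces the BiHom-Lie structure from the twisted commutator. Your direct verification of BiHom-skew-symmetry and the BiHom-Jacobi identity simply inlines the content of Lemma~\ref{cll}; the only small point to add is the multiplicativity axiom $\alpha([x,y])=[\alpha(x),\alpha(y)]$, $\beta([x,y])=[\beta(x),\beta(y)]$ from the paper's definition of BiHom-Lie algebra, which holds under the same regularity/multiplicativity hypotheses you already invoke.
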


As in the previous proposition, it is well known that a Nijenhuis operator on an associative algebra allows to define another associative algebra.
In the next result, we try to establish an analoq of this result for BiHom-associative dialgebras.
\begin{proposition}\label{tns}
 Let $(D, \dashv, \vdash, \alpha, \beta)$ BiHom-associative dialgebra and $N: D\rightarrow D$ a Nijenhuis operator on $D$ i.e.
 $N$ is linear and $\alpha\circ N=N\circ\alpha$ , $\beta\circ N=N\circ\beta$, and
\begin{eqnarray}
 N(x)\dashv N(y)&=&N(N(x)\dashv y+x\dashv N(y)-N(x\cdot y))\label{n1}\\
N(x)\vdash N(y)&=&N(N(x)\vdash y+x\vdash N(y)-N(x\cdot y))\label{n2}
\end{eqnarray}
Then, $(D, \triangleleft, \triangleright, \alpha, \beta)$ is also a BiHom-associative algebra with
\begin{eqnarray}
 x\triangleleft y=N(x)\dashv y+x\dashv N(y)-N(x\dashv y),\\
x\triangleright y=N(x)\vdash y+x\vdash N(y)-N(x\vdash y),
\end{eqnarray}
for all $x, y\in D$.
\end{proposition}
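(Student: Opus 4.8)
The plan is to verify directly the five dialgebra axioms \eqref{eq4}--\eqref{eq8} for the deformed products $\triangleleft,\triangleright$; the compatibility $\alpha\circ\beta=\beta\circ\alpha$ is inherited from $D$ since the structure maps are left unchanged. As in the preceding propositions on Rota-Baxter and centroid deformations, I would carry out one axiom in full, say \eqref{eq8}, namely $(x\triangleright y)\triangleright\beta(z)=\alpha(x)\triangleright(y\triangleright z)$, and indicate that the remaining four follow by entirely parallel computations.

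The cornerstone of the argument is the observation that, after rearrangement, the defining relations \eqref{n1}--\eqref{n2} of the Nijenhuis operator say precisely that $N$ is multiplicative from the deformed products to the original ones:
$$N(x\triangleleft y)=N(x)\dashv N(y),\qquad N(x\triangleright y)=N(x)\vdash N(y).$$
Indeed, the right-hand side of \eqref{n2} is exactly $N$ applied to $N(x)\vdash y+x\vdash N(y)-N(x\vdash y)=x\triangleright y$, and symmetrically for \eqref{n1} with $\dashv$. I would establish this identity first, since it governs every term in which $N$ is applied to a deformed product.

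Next I would expand both sides of \eqref{eq8}. Expanding the outer $\triangleright$ and using the multiplicativity identity to replace each factor $N(x\triangleright y)$ by $N(x)\vdash N(y)$ (and likewise $N(y\triangleright z)$ on the right), then substituting the definition of the inner deformed products and commuting $N$ past $\alpha,\beta$, produces on each side a sum of terms either of product form $(a\vdash b)\vdash\beta(c)$ or of the form $N\bigl((a\vdash b)\vdash\beta(c)\bigr)$, to which the original axiom \eqref{eq8} applies. A direct term-by-term matching then cancels the bulk of the expansion pairwise across the two sides.

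The main obstacle is the residual collection of terms carrying a factor of the shape $N(x\vdash y)$ (or $N(y\vdash z)$), which is \emph{not} of product form and therefore lies outside the reach of \eqref{eq8}. These are handled by a second, targeted application of the Nijenhuis relation: writing such a leftover term as $N(x\vdash y)\vdash N(\beta(z))=N(p)\vdash N(c)$ with $p=x\vdash y$ and $c=\beta(z)$, and invoking \eqref{n2} for the pair $(p,c)$, re-expresses it through terms to which \eqref{eq8} now does apply. Performing the same manoeuvre on the symmetric residual term coming from the other side shows that the two leftover packets coincide, which closes the verification of \eqref{eq8}. The remaining axioms \eqref{eq4}--\eqref{eq7} are obtained by the identical scheme, invoking \eqref{n1} in place of \eqref{n2} (and the corresponding original axiom among \eqref{eq4}--\eqref{eq7}) wherever a $\dashv$-product is being differentiated.
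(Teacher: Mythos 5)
Your proposal is correct and follows essentially the same route as the paper: expand the deformed products, use the Nijenhuis identities to collapse $N(x\triangleright y)$ into $N(x)\vdash N(y)$, cancel the product-form terms by the original dialgebra axiom, and absorb the residual $N(\text{product})\ast N(\cdot)$ terms by a second application of the Nijenhuis relation. The only cosmetic differences are that you showcase axiom (\ref{eq8}) where the paper works out (\ref{eq6}), and that you isolate the multiplicativity identity $N(x\triangleleft y)=N(x)\dashv N(y)$ as an explicit preliminary step rather than invoking (\ref{n1})--(\ref{n2}) inline.
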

\begin{proof}
We only prove axiom (\ref{eq6}) for the products $\triangleleft$ and $\triangleright$. The others are leave to the reader. 
 \begin{eqnarray}
&&\qquad  (x\triangleright y)\triangleleft \beta(z)-\alpha(x)\triangleright(y\triangleleft z)=\nonumber\\
&&=N\Big(N(x)\vdash y+x\dashv y-N(x\dashv y)\Big)\dashv \beta(z)+\Big(N(x)\dashv y+x\vdash N(y)-N(x\vdash y)\Big)\dashv N\beta(z)\nonumber\\
&&\quad-N\Big((N(x)\vdash y+x\vdash N(y)-N(x\vdash y))\dashv\beta(z)\Big)
-N\alpha(x)\vdash\Big(N(y)\dashv z+y\dashv N(z)-N(y\dashv z)\Big)\nonumber\\
&&-\alpha(x)\vdash N\Big(N(y)\dashv z+y\dashv N(z)-N(y\dashv z)\Big)
+N\Big(\alpha(x)\vdash(N(y)\dashv z+y\dashv N(z)-N(y\dashv z))\Big)\nonumber.
 \end{eqnarray}
By (\ref{n1}) and  (\ref{n2}), we have
\begin{eqnarray}
&&\qquad  (x\triangleright y)\triangleleft \beta(z)-\alpha(x)\triangleright(y\triangleleft z)=\nonumber\\
&&=(N(x)\vdash N(y))\dashv \beta(z)+(N(x)\dashv y)\dashv \beta N(z)+(x\vdash N(y))\dashv \beta N(z)-N(x\vdash y))\dashv N\beta(z)\nonumber\\
&&\quad-N\Big((N(x)\vdash y)\dashv\beta(z)\Big)-N\Big((x\vdash N(y))\dashv\beta(z)\Big)-N\Big(N(x\vdash y)\dashv\beta(z)\Big)\nonumber\\
&&-\alpha N(x)\vdash(N(y)\dashv z)-\alpha N(x)\vdash(y\dashv N(z))+N\alpha(x)\vdash N(y\dashv z))\nonumber\\
&&-\alpha(x)\vdash (N(y)\dashv N(z))
+N\Big(\alpha(x)\vdash(N(y)\dashv z\Big)+N\Big(\alpha(x)\vdash(y\dashv N(z))\Big)-N\Big(\alpha(x)\vdash N(y\dashv z))\Big)\nonumber.
 \end{eqnarray}
By (\ref{eq6}), we have
\begin{eqnarray}
&&\qquad  (x\triangleright y)\triangleleft \beta(z)-\alpha(x)\triangleright(y\triangleleft z)=\nonumber\\
&&=-N(x\vdash y))\dashv N\beta(z)-N\Big((N(x)\vdash y)\dashv\beta(z)\Big)-N\Big((x\vdash N(y))\dashv\beta(z)\Big)-N\Big(N(x\vdash y)\dashv\beta(z)\Big)\nonumber\\
&&\quad+N\alpha(x)\vdash N(y\dashv z))
+N\Big(\alpha(x)\vdash(N(y)\dashv z\Big)+N\Big(\alpha(x)\vdash(y\dashv N(z))\Big)-N\Big(\alpha(x)\vdash N(y\dashv z))\Big)\nonumber.
 \end{eqnarray}
Using again (\ref{n1}) and  (\ref{n2}), it comes
\begin{eqnarray}
&&\qquad  (x\triangleright y)\triangleleft \beta(z)-\alpha(x)\triangleright(y\triangleleft z)=\nonumber\\
&&=-N\Big(N(x\vdash y)\dashv\beta(z)+ (x\vdash y)\dashv\beta N(z)-N((x\vdash y)\dashv\beta(z))\Big)\nonumber\\
&&-N\Big((N(x)\vdash y)\dashv\beta(z)\Big)-N\Big((x\vdash N(y))\dashv\beta(z)\Big)-N\Big(N(x\vdash y)\dashv\beta(z)\Big)\nonumber\\
&&\quad+N\Big(\alpha(x)\vdash (N(y)\dashv z)+\alpha(x)\vdash N(y\dashv z)-N(\alpha(x)\vdash (y\dashv z))\Big)\nonumber\\
&&+N\Big(\alpha(x)\vdash(N(y)\dashv z\Big)+N\Big(\alpha(x)\vdash(y\dashv N(z))\Big)-N\Big(\alpha(x)\vdash N(y\dashv z))\Big)\nonumber.
 \end{eqnarray}
The left hand side vanishes by (\ref{eq6}).
\end{proof}

\begin{corollary}
 If $(D, \dashv, \vdash, \alpha)$ is a Hom-associative dialgebra and $N: D\rightarrow D$ a Nijenhuis operator on $D$, then 
$(D, \triangleleft, \triangleright, \alpha)$ is also a Hom-associative algebra with
\begin{eqnarray}
 x\triangleleft y=N(x)\dashv y+x\dashv N(y)-N(x\dashv y),\nonumber\\
x\triangleright y=N(x)\vdash y+x\vdash N(y)-N(x\vdash y),\nonumber
\end{eqnarray}
for all $x, y\in D$.
\end{corollary}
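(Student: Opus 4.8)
The plan is to obtain this corollary as the immediate specialization of Proposition \ref{tns} to the case $\beta=\alpha$. Recall from the first example of this section that a Hom-associative dialgebra $(D,\dashv,\vdash,\alpha)$ is precisely a BiHom-associative dialgebra $(D,\dashv,\vdash,\alpha,\beta)$ in which $\beta=\alpha$; indeed, when $\beta=\alpha$ the commutativity axiom $\alpha\circ\beta=\beta\circ\alpha$ holds trivially and the five associativity identities (\ref{eq4})--(\ref{eq8}) collapse to the defining axioms of a Hom-associative dialgebra. So the entire content of the corollary is already encoded in the proposition once the two structure maps are identified.

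First I would observe that a Nijenhuis operator $N$ on the Hom-associative dialgebra $(D,\dashv,\vdash,\alpha)$ satisfies exactly the hypotheses of Proposition \ref{tns} upon setting $\beta=\alpha$: the two commutation conditions $\alpha\circ N=N\circ\alpha$ and $\beta\circ N=N\circ\beta$ both reduce to the single requirement $\alpha\circ N=N\circ\alpha$, while the relations (\ref{n1}) and (\ref{n2}) are unchanged since they do not mention $\beta$. Next I would note that the deformed products $\triangleleft$ and $\triangleright$ displayed in the statement are literally the products produced by Proposition \ref{tns}, again with no reference to $\beta$ whatsoever.

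Applying Proposition \ref{tns} then yields that $(D,\triangleleft,\triangleright,\alpha,\alpha)$ is a BiHom-associative dialgebra, and invoking once more the identification $\beta=\alpha$ we conclude that $(D,\triangleleft,\triangleright,\alpha)$ is a Hom-associative dialgebra, as claimed.

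I expect no genuine obstacle here, since the argument is a pure specialization of an already-proved result; the only point meriting a moment's attention is that the structure map of the deformed dialgebra remains $\alpha$ rather than being twisted, which is consistent precisely because $N$ commutes with $\alpha$. Should a self-contained verification be preferred, one may instead repeat the computation carried out in the proof of Proposition \ref{tns} verbatim with $\beta$ replaced by $\alpha$ throughout; every invocation of (\ref{eq6}) and of the Nijenhuis relations (\ref{n1})--(\ref{n2}) goes through unaltered, so each dialgebra axiom for the pair $(\triangleleft,\triangleright)$ follows exactly as before.
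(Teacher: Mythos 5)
Your proposal is correct and is exactly the argument the paper intends: the corollary is stated immediately after Proposition \ref{tns} with no separate proof, precisely because it is the specialization $\beta=\alpha$, and your verification that the Nijenhuis hypotheses and the deformed products are untouched by this identification is the only content needed. (The word ``algebra'' in the conclusion is a typo in the paper for ``dialgebra,'' which your reading silently and correctly repairs.)
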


\begin{corollary}
 If $(D, \dashv, \vdash, \alpha, \beta)$ is an associative dialgebra and $N: D\rightarrow D$ a Nijenhuis operator on $D$, then 
$(D, \triangleleft, \triangleright, \alpha, \beta)$ is also an associative algebra with
\begin{eqnarray}
 x\triangleleft y=N(x)\dashv y+x\dashv N(y)-N(x\dashv y),\nonumber\\
x\triangleright y=N(x)\vdash y+x\vdash N(y)-N(x\vdash y),\nonumber
\end{eqnarray}
for all $x, y\in D$.
\end{corollary}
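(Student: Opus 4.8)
The plan is to obtain this statement as the $\alpha=\beta=\mathrm{id}_D$ specialization of Proposition \ref{tns}. Indeed, as recorded in the first Example, an associative dialgebra $(D,\dashv,\vdash)$ is exactly a BiHom-associative dialgebra whose structure maps are both the identity, so the entire machinery of Proposition \ref{tns} applies once we check that the hypotheses specialize correctly.

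First I would verify that a Nijenhuis operator $N$ on the associative dialgebra is a Nijenhuis operator on the associated BiHom-associative dialgebra $(D,\dashv,\vdash,\mathrm{id}_D,\mathrm{id}_D)$. The structure-map compatibility conditions $\alpha\circ N=N\circ\alpha$ and $\beta\circ N=N\circ\beta$ hold trivially since $\alpha=\beta=\mathrm{id}_D$, and the defining relations (\ref{n1}) and (\ref{n2}) are literally the classical Nijenhuis identities for $\dashv$ and $\vdash$ (with the symbol $\cdot$ read as the corresponding product). Hence the two notions of Nijenhuis operator coincide in this case, and no hypothesis of Proposition \ref{tns} is lost.

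Next I would invoke Proposition \ref{tns} directly: the deformed products $\triangleleft$ and $\triangleright$ defined by the stated formulas make $(D,\triangleleft,\triangleright,\mathrm{id}_D,\mathrm{id}_D)$ a BiHom-associative dialgebra. Finally, substituting $\alpha=\beta=\mathrm{id}_D$ into the five defining axioms (\ref{eq4})--(\ref{eq8}) collapses each of them to the corresponding axiom of an ordinary associative dialgebra, so $(D,\triangleleft,\triangleright)$ is an associative dialgebra, as claimed (the concluding phrase ``associative algebra'' in the statement being understood as ``associative dialgebra'', in line with the preceding corollary).

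I do not expect any genuine obstacle here, since all of the computational work is already carried out in Proposition \ref{tns}; the only point requiring care is the bookkeeping that ensures the specialization is legitimate---in particular that $\alpha=\beta=\mathrm{id}_D$ are (trivially) morphisms of the dialgebra, so that every requirement in the hypothesis of Proposition \ref{tns} is met.
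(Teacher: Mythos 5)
Your proposal is correct and matches the paper's (implicit) intent: the paper states this corollary without proof, clearly as the $\alpha=\beta=\mathrm{id}_D$ specialization of Proposition \ref{tns}, which is exactly the reduction you carry out. Your additional bookkeeping (checking the Nijenhuis conditions and the structure-map commutations trivialize, and noting that ``associative algebra'' should read ``associative dialgebra'') is sound and only makes explicit what the paper leaves unsaid.
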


The next proposition asserts that the twist of the products of any BiHom-associative dialgebra by an averaging operator gives rise to another
BiHom-associative dialgebra.

\begin{proposition}
 Let $(D, \dashv, \vdash, \alpha, \beta)$ be a BiHom-associative dialgebra and $\theta: D\rightarrow D$ an injective averaging operator on $D$ i.e.
 $\theta$ is an injective linear map such that $\alpha\circ \theta=\theta\circ\alpha$ , $\beta\circ \theta=\theta\circ\beta$, and
\begin{eqnarray}
 \theta(x)\dashv\theta(y)&=&\theta(\alpha^k\beta^l(x)\dashv\theta(y))=\theta(\theta(x)\dashv\alpha^k\beta^l(y)),\label{c1}\\
\theta(x)\vdash\theta(y)&=&\theta(\alpha^k\beta^l(x)\vdash\theta(y))=\theta(\theta(x)\vdash\alpha^k\beta^l(y)),\label{c2}
\end{eqnarray}
for any $x, y\in D$.
Then, $(D, \triangleleft, \triangleright, \alpha, \beta)$ is also a BiHom-associative algebra with
\begin{eqnarray}
 x\triangleleft y=\theta(x)\dashv\alpha^k\beta^l(y))\\
x\triangleright y=\alpha^k\beta^l(x)\vdash\theta(y),
\end{eqnarray}
for all $x, y\in D$.
\end{proposition}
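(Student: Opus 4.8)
The plan is to verify the five defining axioms (\ref{eq4})--(\ref{eq8}) for the twisted products $\triangleleft$ and $\triangleright$; the compatibility $\alpha\circ\beta=\beta\circ\alpha$ is inherited unchanged, since the structure maps are not modified. Rather than expand each associator directly, which is awkward because the twisted products are not in general valued in $\mathrm{Im}(\theta)$, I would first isolate the one computation that does all the work: the claim that $\theta$ intertwines the new products with the old ones,
$$\theta(x\triangleleft y)=\theta(x)\dashv\theta(y),\qquad \theta(x\triangleright y)=\theta(x)\vdash\theta(y),\qquad\forall\,x,y\in D.$$
Both identities are immediate from the averaging relations: $\theta(x\triangleleft y)=\theta(\theta(x)\dashv\alpha^k\beta^l(y))=\theta(x)\dashv\theta(y)$ by the second equality in (\ref{c1}), and $\theta(x\triangleright y)=\theta(\alpha^k\beta^l(x)\vdash\theta(y))=\theta(x)\vdash\theta(y)$ by the first equality in (\ref{c2}). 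Here I use only that $\theta$ commutes with $\alpha$ and $\beta$, hence with $\alpha^k\beta^l$.

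With this intertwining in hand, each axiom reduces to its untwisted counterpart by a uniform three-step pattern. Consider for instance (\ref{eq5}), namely $(x\triangleleft y)\triangleleft\beta(z)=\alpha(x)\triangleleft(y\triangleright z)$. Applying $\theta$ to the left-hand side and using the intertwining twice, together with $\theta\beta=\beta\theta$, gives
$$\theta\big((x\triangleleft y)\triangleleft\beta(z)\big)=\theta(x\triangleleft y)\dashv\theta(\beta(z))=(\theta(x)\dashv\theta(y))\dashv\beta\theta(z),$$
while the same manipulation on the right-hand side yields $\alpha\theta(x)\dashv(\theta(y)\vdash\theta(z))$. These two expressions are precisely the two sides of the original axiom (\ref{eq5}) evaluated at $\theta(x),\theta(y),\theta(z)$, hence equal. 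Since $\theta$ is injective, the twisted identity follows. I would dispatch (\ref{eq4}), (\ref{eq6}), (\ref{eq7}) and (\ref{eq8}) identically: push $\theta$ through both associators via the intertwining, recognise an instance of the corresponding original axiom among the $\theta$-images, and cancel the outer $\theta$ by injectivity, invoking (\ref{c1}) on the $\dashv$-legs and (\ref{c2}) on the $\vdash$-legs as needed.

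The main obstacle, and the reason the hypotheses are shaped as they are, is exactly this final cancellation. Because $x\triangleleft y$ and $x\triangleright y$ need not lie in $\mathrm{Im}(\theta)$, one cannot argue ``inside $\theta$'' directly; the averaging relations only control expressions of the form $\theta(\text{twisted product})$. Injectivity of $\theta$ is therefore indispensable: it is what transports the genuine equality of the $\theta$-images back down to the desired equality of the twisted associators. I would also emphasise that no multiplicativity of $\alpha,\beta$ is required, since the powers $\alpha^k\beta^l$ are absorbed by (\ref{c1}) and (\ref{c2}) rather than distributed across the products; only the commutation of $\theta$ with the structure maps is used to move $\alpha$ and $\beta$ past $\theta$.
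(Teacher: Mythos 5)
Your proof is correct and follows essentially the same route as the paper: apply $\theta$ to both sides of each twisted axiom, use the averaging relations to convert the twisted products into the original products evaluated at $\theta(x),\theta(y),\theta(z)$, invoke the corresponding original axiom, and cancel the outer $\theta$ by injectivity. Your isolation of the intertwining identities $\theta(x\triangleleft y)=\theta(x)\dashv\theta(y)$ and $\theta(x\triangleright y)=\theta(x)\vdash\theta(y)$ is merely a cleaner packaging of the computation the paper carries out inline.
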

\begin{proof}
We only prove one identity, the others have a similar proof. For any $x, y, z\in D$, one has :
 \begin{eqnarray}
&&\qquad  \theta[(x\triangleleft y)\triangleright\beta(z)-\alpha(x)\triangleright(y\triangleleft z)]=\nonumber\\
&&=\theta[\theta(\theta(x)\dashv\alpha^k\beta^l(y)))\vdash\alpha^k\beta^{l+1}(z))
-\theta\alpha(x)\vdash(\theta(y)\dashv\alpha^k\beta^l(z)))]\nonumber\\
&&=\theta[(\theta(x)\dashv\theta(y)\vdash\alpha^k\beta^{l+1}(z)]
-\theta\alpha(x)\vdash\theta(\theta(y)\dashv\alpha^k\beta^l(z)))]\nonumber\\
&&=(\theta(x)\vdash\theta(y))\dashv\beta\theta(z)-\alpha\theta(x)\vdash(\theta(y)\dashv\theta(z))\nonumber.
 \end{eqnarray}
Which vanishes by axiom (\ref{eq6}), and the conclusion holds by injectivity.
\end{proof}

At this moment, we introduce ideals for BiHom-associative dialgebra in order to give another construction of BiHom-associative dialgebras.

\begin{definition}
Let $(D, \dashv, \vdash, \alpha, \beta)$ be a BiHom-associative dialgebra and $D_o$ a subset of $D$. We say that $D_o$ is a BiHom-subalgebra of $D$ if
$D_o$ is stable under $\alpha$ and $\beta$, and $x\dashv y, x\vdash y\in D_o$, for any $x, y\in D_o$. 
\end{definition}
\begin{example}
  If $\varphi : D_1\rightarrow D_2$ is a homomorphism of BiHom-associative dialgebras, the image $Im\varphi$ is a BiHom-subalgebra of  $D_2$.
\end{example}

\begin{definition}
A two side BiHom-ideal of a BiHom-associative dialgebra $(D, \dashv, \vdash, \alpha, \beta)$ is subspace $I$ such that $\alpha(I)\subset I, 
 x\ast y, y\ast x\in I$ for all $x\in D, y\in I$ with $\ast=\dashv$ and $\vdash$. Note that $I$ is called the left and right BiHom-ideal if $x\dashv y, x\vdash y$ and 
$y\dashv x, y\vdash x$ are in $I$, respectively, for all $x\in D. y\in I$.
\end{definition}

\begin{example}
 i) Obviously $I=\{0\}$ and $I=D$ are two-sided ideals.\\
ii) If $\varphi : D_1\rightarrow D_2$ is a homomorphism of BiHom-associative dialgebras, the kernel $Ker\varphi$ is a two sided ideal in $D_1$.\\
iii) If $I_1$ and $I_2$ are two sided ideals of $D$, then so is $I_1+I_2$.
\end{example}

In the below proposition, we prove that BiHom-associative dialgebras are closed under direct summation, and give a condition for which
a linear map becomes a morphism.

\begin{proposition}
Let $({A},  \dashv_{A}, \vdash_{A}, \alpha_{A}, \beta_{A})$ and $({B}, \dashv_{B}, \vdash_{B},  \alpha_{B}, \beta_{B})$ be two 
 BiHom-associative dialgebras. Then there exists a BiHom-associative dialgebra structure
on ${A}\oplus{B}$ with the bilinear maps $\triangleleft, \triangleright : ({A}\oplus{B})^{\otimes 2}\rightarrow {A}\oplus{B}$
  given by 
$$(a_1+b_2)\dashv(a_2+b_2)=a_1\dashv_{A}a_2+b_2\dashv_{B}b_2,$$ $$(a_1+b_1)\vdash(a_2+b_2)=a_1\vdash_{A}a_2+b_{A}\vdash_{B}b_2$$ and the linear maps 
$\alpha=\alpha_{A}+\alpha_{B},\, \beta=\beta_{A}+\beta_{B} : {A}\oplus{B}\rightarrow {A}\oplus{B}$ given by 
$$(\alpha_{A}+\alpha_{B})(a+b)=\alpha_{A}(a)+\alpha_{B}(b),\, (\beta_{A}+\beta_{B})(a+b)=\beta_{A}(a)+\beta_{B}(b),\, 
\forall(a,b)\in({A}\times{B}).  
$$
Moreover, if $\xi : {A}\rightarrow {B}$  is a linear map. 
Then $ \xi : ({A}, \dashv_{A}, \vdash_{A}, \alpha_A, \beta_A)$ to  
$({B}, \dashv_{B}, \vdash_{B},  \alpha_B, \beta_B)$ is a morphism if and only if its graph  $\Gamma_\xi=\{(x, \xi(x)), x\in A\}$
 is a BiHom-subalgebra of $({A}\oplus{B}, \triangleleft, \triangleright, \alpha, \beta)$.
\end{proposition}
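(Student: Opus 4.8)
The plan is to treat the two assertions separately. For the direct-sum structure I would verify all six axioms of Definition \ref{dia} componentwise, exploiting the fact that the products and the structure maps on $A\oplus B$ never mix the two summands. For the graph characterization I would translate each defining condition of a BiHom-subalgebra (stability under $\alpha,\beta$ and closure under the two products) into the four defining conditions of a morphism, reading the equivalence off in both directions.

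For the direct sum, I write a generic element as $a+b$ with $a\in A$, $b\in B$, and use the defining rules $(a_1+b_1)\dashv(a_2+b_2)=a_1\dashv_A a_2+b_1\dashv_B b_2$ and $(a_1+b_1)\vdash(a_2+b_2)=a_1\vdash_A a_2+b_1\vdash_B b_2$, together with $\alpha=\alpha_A+\alpha_B$ and $\beta=\beta_A+\beta_B$ acting in each slot. Commutativity $\alpha\circ\beta=\beta\circ\alpha$ is then immediate from the same relation in $A$ and in $B$. For each of the associativity-type axioms (\ref{eq4})--(\ref{eq8}), substituting $x=a_1+b_1$, $y=a_2+b_2$, $z=a_3+b_3$ and expanding by the componentwise rules splits the identity into an $A$-component and a $B$-component; the $A$-component is precisely the corresponding axiom in $(A,\dashv_A,\vdash_A,\alpha_A,\beta_A)$ and the $B$-component the corresponding axiom in $B$, both of which hold by hypothesis. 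Hence $A\oplus B$ satisfies every axiom.

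For the graph $\Gamma_\xi=\{(x,\xi(x)):x\in A\}$, assume first that $\xi$ is a morphism. Then $\alpha(x,\xi(x))=(\alpha_A(x),\alpha_B(\xi(x)))=(\alpha_A(x),\xi(\alpha_A(x)))\in\Gamma_\xi$ using $\alpha_B\circ\xi=\xi\circ\alpha_A$, and likewise $\beta(x,\xi(x))\in\Gamma_\xi$, so $\Gamma_\xi$ is stable under the structure maps. Moreover $(x,\xi(x))\dashv(y,\xi(y))=(x\dashv_A y,\xi(x)\dashv_B\xi(y))=(x\dashv_A y,\xi(x\dashv_A y))\in\Gamma_\xi$ since $\xi(x)\dashv_B\xi(y)=\xi(x\dashv_A y)$, and the analogous computation holds for $\vdash$; thus $\Gamma_\xi$ is a BiHom-subalgebra. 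Conversely, if $\Gamma_\xi$ is a BiHom-subalgebra, stability under $\alpha$ forces $(\alpha_A(x),\alpha_B(\xi(x)))$ to have second coordinate equal to $\xi$ of its first, i.e. $\alpha_B(\xi(x))=\xi(\alpha_A(x))$ for all $x$, whence $\alpha_B\circ\xi=\xi\circ\alpha_A$; the same argument with $\beta$ yields $\beta_B\circ\xi=\xi\circ\beta_A$. Closure under $\dashv$ forces the second coordinate of $(x\dashv_A y,\xi(x)\dashv_B\xi(y))$ to equal $\xi(x\dashv_A y)$, and closure under $\vdash$ gives the corresponding identity, so $\xi$ preserves both products. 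Hence $\xi$ is a morphism.

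The argument is essentially bookkeeping, so I do not expect a genuine obstacle; the one point deserving care is the converse direction, where one must invoke that every element of $\Gamma_\xi$ is \emph{uniquely} determined by its first coordinate (the defining feature of a graph) in order to extract the morphism equalities from the mere membership statements ``$\,\cdot\,\in\Gamma_\xi$.'' It is exactly this uniqueness that makes ``the product stays in the graph'' equivalent to ``$\xi$ preserves the product,'' and I would state it explicitly rather than leave it implicit.
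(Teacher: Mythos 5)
Your proposal is correct and follows essentially the same route as the paper: the direct-sum axioms are checked componentwise, and the graph characterization is obtained by translating closure under $\triangleleft$, $\triangleright$, $\alpha$, $\beta$ into the morphism identities in both directions. Your explicit remark that the converse relies on each element of $\Gamma_\xi$ being determined by its first coordinate is a point the paper leaves implicit, but the underlying argument is the same.
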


\begin{proof}
The proof of the first part of the proposition comes from a simple computation.\\
Let us suppose that $\xi : ({A}, \dashv_{A}, \vdash_{A},\alpha_A, \beta_A)\rightarrow({B}, \dashv_{B}, \vdash_{B},\alpha_B, \beta_B)$ is a
morphism of BiHom-associative dialgebras.
Then $$(u+\xi(u))\dashv(v+\xi(v))=(u\dashv_{A}v+\xi(u)\dashv_{B}\xi(v))=(u\dashv_{A}v+\xi(u\dashv_{A}v)$$
     $$(u+\xi(u))\vdash(v+\xi(v))=(u\vdash_{A}v+\xi(u)\vdash_{B}\xi(v))=(u\vdash_{A}v+\xi(u\vdash_{A}v).$$
Thus the graph $\Gamma_\xi$ is closed under the operations $\dashv$ and $\vdash$.\\
 Furthermore since $\xi\circ\alpha_A=\alpha_B\circ\xi,$ and $\xi\circ\beta_A=\beta_B\circ\xi,$ we have 
$$
(\alpha_A\oplus\alpha_B)(u, \xi(u))=(\alpha_A(u), \alpha_B\circ\xi(u))=(\alpha_A(u), \xi\circ\alpha_A(u)).
$$
and 
$$
(\beta_A\oplus\beta_B)(u, \xi(u))=(\beta_A(u), \beta_B\circ\xi(u))=(\beta_A(u), \xi\circ\beta_A(u)),
$$
implies that  $\Gamma_\xi$ is closed $\alpha_A\oplus\alpha_B$ and $\beta_A\oplus\beta_B.$
 Thus, $\Gamma_\xi$  is a BiHom-subalgebra of 
$({A}\otimes{B}, \dashv, \vdash, \alpha, \beta).$ \\
Conversely, if the graph $\Gamma_\xi\subset{A}\oplus{B}$ is a BiHom-subalgebra of 
$({A}\oplus{B}, \dashv, \vdash, \alpha, \beta)$ then we 
$$(u+\xi(u))\dashv(v+\xi(v))=(u\dashv_{A}v+\xi(u)\dashv_{B}\xi(v))\in \Gamma_\xi $$
 $$(u+\xi(u))\vdash(v+\xi(v))=(u\vdash_{A}v+\xi(u)\vdash_{B}\xi(v))\in \Gamma_\xi.$$
Furthermore, $(\alpha_A\oplus\alpha_B)(\Gamma_\xi)\subset \Gamma_\xi,\, (\beta_A\oplus\beta_B)(\Gamma_\xi)\subset \Gamma_\xi,$ implies 
$$
(\alpha_A\oplus\alpha_B)(u, \xi(u))=(\alpha_A(u),\alpha_B\circ\xi(u))\in \Gamma_\xi,\,(\beta_A\oplus\beta_B)(u, \xi(u))
=(\beta_A(u),\beta_B\circ\xi(u))\in \Gamma_\xi,  
$$
which is equivalent to the condition $\alpha_B\circ\xi(u)=\xi\circ\alpha_A(u),$ i.e $\alpha_B\circ\xi=\xi\circ\alpha_A.$ Similary,
 $\beta_B\circ\xi=\xi\circ\beta_A$. Therefore, $\xi$ is a 
morphism BiHom-associative dialgebras.
\end{proof}

\begin{proposition}\label{P3}
Let $(D, \dashv, \vdash, \alpha, \beta)$ be a BiHom-associative dialgebra and $I$ be a two sided  BiHom-ideal of 
$(D,\dashv, \vdash, \alpha, \beta)$. Then, $(D/I, \left[\cdot, \cdot\right],\overline{\dashv}, \overline{\vdash}, 
\overline{\alpha}, \overline{\beta})$ is a 
BiHom-associative dialgebra where 
$$\overline{x}\;\overline{\dashv}\;\overline{y}:=\overline{x\dashv y},\;\;
 \overline{x}\;\overline{\vdash}\;\overline{y}:=\overline{x\vdash y},\;\;
 \overline{\alpha}(\overline{x}):=\overline{\alpha(x)},\;\;
 \overline{\beta}(\overline{x}):=\overline{\beta(x)},$$
 for all $\overline{x}, \overline{y}\in A/I.$
\end{proposition}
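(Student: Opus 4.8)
The plan is to proceed in two stages: first establish that the four operations $\overline{\dashv}, \overline{\vdash}, \overline{\alpha}, \overline{\beta}$ are well defined on $D/I$, and then deduce the defining axioms of a BiHom-associative dialgebra by pushing the corresponding identities in $D$ through the canonical projection $\pi : D \to D/I$, $\pi(x) = \overline{x}$. The real content sits entirely in the first stage; the second is purely formal.

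For well-definedness I would take two representatives of each class, say $x' = x + i$ and $y' = y + j$ with $i, j \in I$, and expand
\[
x' \dashv y' = x \dashv y + x \dashv j + i \dashv y + i \dashv j.
\]
Because $I$ is a two-sided BiHom-ideal, each of the last three terms lies in $I$: $x \dashv j$ and $i \dashv j$ by the condition $x \ast y \in I$ for $y \in I$, and $i \dashv y$ by the condition $y \ast x \in I$ for $y \in I$ (applied with $\ast = \dashv$). Hence $\overline{x' \dashv y'} = \overline{x \dashv y}$, so $\overline{\dashv}$ is independent of the choice of representatives; the identical computation with $\vdash$ handles $\overline{\vdash}$. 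For the structure maps, $\alpha(x') = \alpha(x) + \alpha(i)$ with $\alpha(i) \in I$ since $\alpha(I) \subset I$, so $\overline{\alpha}$ is well defined, and the analogous argument gives $\overline{\beta}$.

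With well-definedness in hand, $\pi$ is by construction a surjective linear map satisfying $\pi(x \dashv y) = \overline{x}\,\overline{\dashv}\,\overline{y}$, $\pi(x \vdash y) = \overline{x}\,\overline{\vdash}\,\overline{y}$, $\pi(\alpha(x)) = \overline{\alpha}(\overline{x})$ and $\pi(\beta(x)) = \overline{\beta}(\overline{x})$. I would then simply apply $\pi$ to each of the six defining relations of Definition \ref{dia} holding in $D$. For example, applying $\pi$ to (\ref{eq5}) yields
\[
(\overline{x}\,\overline{\dashv}\,\overline{y})\,\overline{\dashv}\,\overline{\beta}(\overline{z}) = \overline{\alpha}(\overline{x})\,\overline{\dashv}\,(\overline{y}\,\overline{\vdash}\,\overline{z}),
\]
and similarly for (\ref{eq4}), (\ref{eq6}), (\ref{eq7}), (\ref{eq8}), while the commutativity $\overline{\alpha}\circ\overline{\beta} = \overline{\beta}\circ\overline{\alpha}$ descends immediately from $\alpha\circ\beta = \beta\circ\alpha$. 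Since every class in $D/I$ has the form $\overline{x}$, these identities hold for all elements of the quotient, which is exactly what is required.

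The one genuine subtlety, and the step I would treat most carefully, is the well-definedness of $\overline{\beta}$: the stated definition of a two-sided BiHom-ideal records only $\alpha(I) \subset I$, whereas the quotient structure map $\overline{\beta}$ requires $\beta(I) \subset I$ as well. I would flag this and take $\beta(I) \subset I$ as an evidently intended part of the ideal definition; without it $\overline{\beta}$ need not descend to the quotient. Everything else is routine, as the axioms transfer mechanically through the homomorphism $\pi$.
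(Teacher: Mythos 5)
Your proof is correct and follows the same underlying approach as the paper: the axioms of a BiHom-associative dialgebra descend to $D/I$ by applying the canonical projection to the corresponding identities in $D$, which is exactly how the paper verifies (one instance of) left associativity by writing the difference as the class of an element that already vanishes in $D$. The difference is that you are more thorough where it matters: the paper entirely omits the well-definedness of $\overline{\dashv}$, $\overline{\vdash}$, $\overline{\alpha}$, $\overline{\beta}$, which is the only non-formal part of the argument, and your observation that the paper's definition of a two-sided BiHom-ideal records $\alpha(I)\subset I$ but not $\beta(I)\subset I$ (needed for $\overline{\beta}$ to descend) is a genuine and correct catch --- the hypothesis is evidently intended and should be added to the definition.
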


\begin{proof}
We only prove left associativity,  the other being proved similarly. For all $\overline{x}, \overline{y}, 
\overline{z}\in D/I$, 
we have 
\begin{eqnarray}
(\overline{x}\overline{\vdash}\overline{y})\overline{\vdash}\overline{\beta}(\overline{z})
-\overline{\alpha}(\overline{x})\overline{\vdash}(\overline{y}\overline{\vdash}\overline{z})
=\overline{(x\vdash y)\vdash\beta(z)-\alpha(x)\vdash(y\vdash z)}
=0.\nonumber
\end{eqnarray}
Then, $(D/I, \overline{\dashv}, \overline{\vdash}, \overline{\alpha}, \overline{\beta})$ is  BiHom-associative dialgebra.
\end{proof}

Now, let us recall the definition of BiHom-Lie algebra.

\begin{definition}\cite{GACF}
$A$ BiHom-Lie algebra $(L, \left[\cdot, \cdot\right], \alpha,\beta)$ is a $4$-tuple in  where L is linear space, $\alpha,
 \beta : A\rightarrow A $,are linear maps and 
$\left[\cdot, \cdot\right] : L\otimes L\rightarrow L$ is a bilinear maps, such that, for all $x, y, z\in L$ : 
\begin{equation}
\alpha\circ\beta=\beta\circ\alpha,
\end{equation}
\begin{equation}
\alpha(\left[x, y\right])=\left[\alpha(x), \alpha(y)\right],\,\text{and},\,\beta(\left[x, y\right])=\left[\beta(x), \beta(y)\right],
\end{equation}
\begin{equation}
\left[\beta(x),\alpha(y)\right])=-\left[\beta(y), \alpha(x)\right],\, (\text{BiHom-skew-symetry}),
\end{equation}
\begin{equation}
\left[\beta^2(x),\left[\beta(y),\alpha(z)\right]\right]+\left[\beta^2(y),\left[\beta(z),\alpha(x)\right]\right]
+\left[\beta^2(z),\left[\beta(x),\alpha(y)\right]\right]=0,
\end{equation}
\begin{center}
 (\text{BiHom-Jacobi identity}).
\end{center}
\end{definition}
The maps $\alpha$ and $\beta$ (in this order) are called the structure maps of L.

 \begin{definition}
A morphism  between two BiHom-Lie algebras 
$f : (L, [-, -], \alpha, \beta)\rightarrow(L', [-, -]', \alpha', \beta')$ is a linear map 
$f : L\rightarrow L'$ such that $\alpha'\circ f=f\circ\alpha,\, \beta'\circ f=f\circ\beta$ and 
$f(\left[x, y\right])=\left[f(x), f(y)\right]'$, for all $x, y \in L.$
 \end{definition}

The following lemma asserts that the commutator of any BiHom-associative algebra gives rise to BiHom-Lie.
\begin{lemma}\cite{GACF}\label{cll}
 Let $(A, \cdot, \alpha, \beta )$ be a regular BiHom-associative algebra. Then
$$L(A)=(A, [-, -], \alpha, \beta)$$
is a regular BiHom-Lie algebra, where
$$[x, y]=x\cdot y-\alpha^{-1}\beta(y)\cdot\alpha\beta^{-1}(x),$$
for any $x, y\in A.$
\end{lemma}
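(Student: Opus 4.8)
The plan is to verify the four defining axioms of a BiHom-Lie algebra in turn for the bracket $[x,y]=x\cdot y-\alpha^{-1}\beta(y)\cdot\alpha\beta^{-1}(x)$, using only BiHom-associativity of $\cdot$, the fact that $\alpha,\beta$ are commuting automorphisms that are morphisms for $\cdot$, and the commutation $\alpha\beta=\beta\alpha$. The first axiom, $\alpha\circ\beta=\beta\circ\alpha$, is literally the one already assumed on $A$, and the regularity of $L(A)$ (bijectivity of the structure maps) is inherited verbatim, since $L(A)$ carries the same $\alpha,\beta$; so nothing is to prove for these.

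For multiplicativity I would first record that, because $\alpha$ and $\beta$ are bijective morphisms for $\cdot$, their inverses $\alpha^{-1},\beta^{-1}$ are also morphisms and commute with $\alpha,\beta$. Then applying $\alpha$ to $[x,y]$ and distributing it over the two products gives $\alpha(x)\cdot\alpha(y)-\beta(y)\cdot\alpha^2\beta^{-1}(x)$ after using $\alpha\alpha^{-1}=\mathrm{id}$ and $\alpha\beta=\beta\alpha$; expanding $[\alpha(x),\alpha(y)]$ directly yields the same expression because $\alpha^{-1}\beta\alpha(y)=\beta(y)$ and $\alpha\beta^{-1}\alpha(x)=\alpha^2\beta^{-1}(x)$. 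The computation for $\beta$ is identical, and the only point to watch is that the twists $\alpha^{-1}\beta$ and $\alpha\beta^{-1}$ rearrange correctly, which is exactly the commutation relation.

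The simplification I would establish next, and reuse throughout, is that on arguments of the form $\beta(\cdot)$ and $\alpha(\cdot)$ the bracket collapses to the naive commutator: taking $u=\beta(x)$, $v=\alpha(y)$ one gets $\alpha^{-1}\beta(v)=\beta(y)$ and $\alpha\beta^{-1}(u)=\alpha(x)$, hence $[\beta(x),\alpha(y)]=\beta(x)\cdot\alpha(y)-\beta(y)\cdot\alpha(x)$. BiHom-skew-symmetry $[\beta(x),\alpha(y)]=-[\beta(y),\alpha(x)]$ is then immediate by interchanging $x$ and $y$ in this formula.

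The main work, and the step I expect to be the real obstacle, is the BiHom-Jacobi identity $[\beta^2(x),[\beta(y),\alpha(z)]]+[\beta^2(y),[\beta(z),\alpha(x)]]+[\beta^2(z),[\beta(x),\alpha(y)]]=0$. Using the collapse formula, each inner bracket $[\beta(y),\alpha(z)]$ becomes $\beta(y)\cdot\alpha(z)-\beta(z)\cdot\alpha(y)$; I would then expand each outer bracket via $[\beta^2(x),w]=\beta^2(x)\cdot w-\alpha^{-1}\beta(w)\cdot\alpha\beta(x)$, distributing the twists over the two summands of $w$ by multiplicativity of $\beta$ and $\alpha^{-1}$. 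This produces four products per cyclic term, twelve in all, each a left- or right-normed product of three twisted copies of $x,y,z$. The crux is to apply BiHom-associativity $(a\cdot b)\cdot\beta(c)=\alpha(a)\cdot(b\cdot c)$ to bring every term into a common normalized shape, after which the twelve terms cancel in six pairs once the three cyclic summands are added (exactly as in the classical associative commutator, where associativity turns $x(yz)$ into $(xy)z$ to cancel against the term coming from another cyclic slot). All the genuine care lies in the bookkeeping of the exponents of $\alpha^{\pm1},\beta^{\pm1}$: one must check, after the associativity rewrite and repeated use of $\alpha\beta=\beta\alpha$, that each oppositely signed pair carries identical twists on all three slots. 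Everything past that point is formal.
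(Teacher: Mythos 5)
Your verification is correct: the collapse $[\beta(x),\alpha(y)]=\beta(x)\cdot\alpha(y)-\beta(y)\cdot\alpha(x)$, the single application of BiHom-associativity to rewrite the left-normed terms $(\alpha^{-1}\beta^2(y)\cdot\beta(z))\cdot\alpha\beta(x)$ as $\beta^2(y)\cdot(\beta(z)\cdot\alpha(x))$, and the resulting cancellation of the twelve right-normed terms in six pairs under the cyclic sum all check out, as does the use of $\alpha^{-1},\beta^{-1}$ being morphisms. Note that the paper itself offers no proof of this lemma --- it is quoted from \cite{GACF} --- and your argument is essentially the standard verification given there.
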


\begin{proposition}
 Let $(L, [-, -], \alpha,\beta)$ be a BiHom-Lie algebra and $N : L\rightarrow L$ be a Nijenhuis operator on $L$ i.e. 
$\alpha\circ N=N\circ\alpha$, $\beta\circ N=N\circ\beta$ and
\begin{eqnarray}
 [N(x), N(y)]=N([N(x), y]+[x, N(y)]-N([x, y]))\nonumber
\end{eqnarray}
for any $x, y\in L$.
Then, $(L, [-, -]_N, \alpha,\beta)$ is a BiHom-Lie algebra with
\begin{eqnarray}
 [x, y]_N=[N(x), y]+[x, N(y)]-N([x, y])\nonumber
\end{eqnarray}
for all $x, y\in L$.
\end{proposition}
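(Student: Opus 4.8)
The plan is to check the four defining axioms of a BiHom-Lie algebra for $(L,[-,-]_N,\alpha,\beta)$ one at a time, leaving the BiHom-Jacobi identity for last since it is by far the hardest. The commutativity $\alpha\circ\beta=\beta\circ\alpha$ carries over unchanged from the given BiHom-Lie structure, so there is nothing to prove there.

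For multiplicativity I would apply $\alpha$ to $[x,y]_N=[N(x),y]+[x,N(y)]-N([x,y])$, distribute over the three summands, and move $\alpha$ inside each original bracket using the original multiplicativity $\alpha([a,b])=[\alpha(a),\alpha(b)]$ together with $\alpha\circ N=N\circ\alpha$; the three terms then reassemble into $[\alpha(x),\alpha(y)]_N$. The computation for $\beta$ is identical, using $\beta\circ N=N\circ\beta$. For BiHom-skew-symmetry I would expand $[\beta(x),\alpha(y)]_N$ into its three constituent brackets, commute $N$ past $\alpha$ and $\beta$ so that every term acquires the form $[\beta(\cdot),\alpha(\cdot)]$, apply the original BiHom-skew-symmetry $[\beta(a),\alpha(b)]=-[\beta(b),\alpha(a)]$ to each term, and match the result against the identical expansion of $-[\beta(y),\alpha(x)]_N$.

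The decisive tool for the BiHom-Jacobi identity is the compatibility identity $N([x,y]_N)=[N(x),N(y)]$: substituting the definition of $[-,-]_N$ shows that $N([x,y]_N)$ equals the right-hand side of the Nijenhuis relation, hence equals $[N(x),N(y)]$. My plan is then to expand the cyclic sum $\sum_{\mathrm{cyc}}[\beta^2(x),[\beta(y),\alpha(z)]_N]_N$ by first resolving each outer deformed bracket $[a,b]_N=[N(a),b]+[a,N(b)]-N([a,b])$ with $a=\beta^2(x)$ and $b=[\beta(y),\alpha(z)]_N$. In the middle summand the compatibility identity collapses $N(b)$ into the genuine bracket $[\beta N(y),\alpha N(z)]$, turning a doubly deformed bracket into an ordinary one; in the other two summands I expand the inner $[\beta(y),\alpha(z)]_N$ into its three ordinary constituents and distribute $N$. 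After commuting every copy of $N$ past $\alpha$ and $\beta$, the cyclic sum becomes a combination of ordinary brackets of the twisted variables.

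The main obstacle is the ensuing bookkeeping: I must regroup these terms into cyclic triples, each of which is an instance of the original BiHom-Jacobiator $[\beta^2(\cdot),[\beta(\cdot),\alpha(\cdot)]]+(\text{cyclic})$ and therefore vanishes. I expect the terms carrying no internal $N$ to cancel by one application of the original BiHom-Jacobi identity, the terms carrying a single $N$ (placed on $x$, on $y$, or on $z$) to form three further copies of it, and the terms sitting under an outer $N$ to combine into $N$ applied to one last BiHom-Jacobiator, again vanishing. The delicate point is to line up the twisting weights $\beta^2$, $\beta$, $\alpha$ correctly inside each triple; apart from this matching, only the Nijenhuis relation, the compatibility identity, and the original BiHom-Jacobi identity are used, so no genuinely new structural input is required.
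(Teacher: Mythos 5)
Your plan is correct, and it effectively supplies the whole argument: the paper's own proof of this proposition consists of the single sentence ``It follows from direct computation,'' so there is no competing decomposition to measure you against. The computation does close up along the lines you describe, with one point worth making explicit: after expanding the outer deformed bracket and using $N\bigl([\beta(y),\alpha(z)]_N\bigr)=[\beta N(y),\alpha N(z)]$ in the middle summand, the first summand still contains a term of the form $[\,N\beta^2(x),\,N[\beta(y),\alpha(z)]\,]$, and you must apply the compatibility identity $[N(a),N(b)]=N([a,b]_N)$ a second time (with $a=\beta^2(x)$, $b=[\beta(y),\alpha(z)]$) to convert it into ordinary brackets under $N$; this is covered by the tools you list but is easy to overlook. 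Once that is done, the cyclic sum regroups as three copies of the original BiHom--Jacobiator with $N$ placed on two of the three slots, minus $N$ applied to the three Jacobiators with $N$ on one slot, plus $N^2$ applied to the untouched Jacobiator --- a slightly different bookkeeping from the one you predict (there are no surviving terms with ``no internal $N$'' outside the $N^2$ piece), but each group vanishes by the original BiHom--Jacobi identity exactly as you intend, using throughout that $N$ commutes with $\alpha$ and $\beta$. The verifications of multiplicativity and BiHom-skew-symmetry go through termwise as you state.
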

\begin{proof}
 It follows from direct computation.
\end{proof}

\begin{corollary}\label{aln1}
 Let $(A, \cdot, \alpha,\beta)$ be a BiHom-associative algebra  and $N : A\rightarrow A$ be a Nijenhuis operator on $A$ i.e. 
$\alpha\circ N=N\circ\alpha$, $\beta\circ N=N\circ\beta$ and
\begin{eqnarray}
 N(x)\cdot N(y)=N(N(x)\cdot y+x\cdot N(y)-N(x\cdot y))\nonumber
\end{eqnarray}
for any $x, y\in A$.
Let us denote by $L(A)$ the BiHom-Lie algebra associated with $A$ as in Proposition \ref{cll}. Then,
$(A, [-, -]_N, \alpha,\beta)$ is a BiHom-Lie algebra.
\end{corollary}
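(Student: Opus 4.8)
The plan is to reduce the statement to the Proposition immediately preceding it. That Proposition says that whenever $N$ is a Nijenhuis operator on a BiHom-Lie algebra $(L,[-,-],\alpha,\beta)$, the deformed bracket $[x,y]_N=[N(x),y]+[x,N(y)]-N([x,y])$ is again a BiHom-Lie bracket with the same structure maps. Since $L(A)=(A,[-,-],\alpha,\beta)$ is a BiHom-Lie algebra by Lemma~\ref{cll}, it suffices to verify that the very same $N$ which is Nijenhuis for the associative product $\cdot$ is also a Nijenhuis operator for the commutator bracket $[x,y]=x\cdot y-\alpha^{-1}\beta(y)\cdot\alpha\beta^{-1}(x)$. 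The compatibility $\alpha\circ N=N\circ\alpha$, $\beta\circ N=N\circ\beta$ is shared by both structures, so the only identity left to check is the Nijenhuis relation $[N(x),N(y)]=N([N(x),y]+[x,N(y)]-N([x,y]))$.

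First I would record the one fact that makes everything go through: since $N$ commutes with $\alpha$ and $\beta$ and (in the regular case) with their inverses, it also commutes with the composite twists $\alpha^{-1}\beta$ and $\alpha\beta^{-1}$. Hence in the expression $[N(x),N(y)]=N(x)\cdot N(y)-\alpha^{-1}\beta(N(y))\cdot\alpha\beta^{-1}(N(x))$ one may rewrite $\alpha^{-1}\beta(N(y))=N(\alpha^{-1}\beta(y))$ and $\alpha\beta^{-1}(N(x))=N(\alpha\beta^{-1}(x))$, turning both summands into products of the form $N(u)\cdot N(v)$. I would then apply the associative Nijenhuis identity $N(u)\cdot N(v)=N\big(N(u)\cdot v+u\cdot N(v)-N(u\cdot v)\big)$ to each of the two products, pulling a single $N$ out in front of the whole expression.

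Next I would expand the right-hand side $N([N(x),y]+[x,N(y)]-N([x,y]))$ directly from the definition of the commutator, again moving $N$ past every occurrence of $\alpha^{-1}\beta$ and $\alpha\beta^{-1}$. Both the untwisted part (coming from $x\cdot y$) and the twisted part (coming from $\alpha^{-1}\beta(y)\cdot\alpha\beta^{-1}(x)$) should reproduce, term for term, exactly what the previous step produced for $[N(x),N(y)]$; the only discrepancy is a harmless reordering of two summands inside an argument of $N$, which is absorbed by additivity. Matching the two sides then establishes the Nijenhuis relation for $[-,-]$, and the Corollary follows by the preceding Proposition applied to $L(A)$.

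I expect the only real obstacle to be bookkeeping: keeping track of which argument carries $\alpha^{-1}\beta$ and which carries $\alpha\beta^{-1}$, and ensuring that $N$ is pushed through these twists consistently so that the twisted terms of the two sides align. No genuinely new identity beyond the associative Nijenhuis relation and the commutativity of $N$ with the structure maps is needed.
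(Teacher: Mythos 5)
Your proposal is correct and follows exactly the route the paper intends: the corollary is stated as a consequence of the preceding proposition on Nijenhuis operators for BiHom-Lie algebras, and the only content to supply is that an associative Nijenhuis operator commuting with $\alpha,\beta$ (hence, in the regular setting required by Lemma~\ref{cll}, with $\alpha^{-1}\beta$ and $\alpha\beta^{-1}$) is also a Nijenhuis operator for the commutator bracket of $L(A)$ --- which your term-by-term matching of $[N(x),N(y)]$ against $N([N(x),y]+[x,N(y)]-N([x,y]))$ establishes. The paper omits this verification entirely, so your write-up in fact fills the gap it leaves.
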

% \begin{proof}
% We only need to show that $(A, \alpha, \beta, \ast_N)$ is a BiHom-associative algebra. And the Theorem \ref{} will end the proof.
% \end{proof}

\begin{corollary}\label{aln2}
 Let $(A, \cdot, \alpha,\beta)$ be a BiHom-associative algebra and $N : A\rightarrow A$ be a Nijenhuis operator on $A$ i.e. 
$\alpha\circ N=N\circ\alpha$, $\beta\circ N=N\circ\beta$ and
\begin{eqnarray}
 N(x)\cdot N(y)=N(N(x)\cdot y+x\cdot N(y)-N(x\cdot y))\nonumber
\end{eqnarray}
for any $x, y\in A$.
Then, $(A, \{-, -\}, \alpha,\beta)$ is a BiHom-Lie algebra with
 \begin{eqnarray}
 \{x, y\}= x\ast_N y-\alpha^{-1}\beta(y)\ast_N\alpha\beta^{-1}(x)\nonumber
\end{eqnarray}
and 
\begin{eqnarray}
 x\ast_Ny= N(x)\cdot y+x\cdot N(y)-N(x\cdot y)\nonumber
\end{eqnarray}
for all $x, y\in A$.
\end{corollary}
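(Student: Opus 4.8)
The plan is to recognize that the bracket $\{-,-\}$ of this corollary is literally the same as the bracket $[-,-]_N$ already shown to be BiHom-Lie in Corollary \ref{aln1}; once this identification is made, there is nothing left to prove. Recall that in Corollary \ref{aln1} the relevant bracket is $[x,y]_N = [N(x),y] + [x,N(y)] - N([x,y])$, built from the BiHom-commutator $[x,y] = x\cdot y - \alpha^{-1}\beta(y)\cdot\alpha\beta^{-1}(x)$ of $L(A)$ as in Lemma \ref{cll}. So the whole task reduces to verifying the algebraic identity $\{x,y\} = [x,y]_N$ and then invoking Corollary \ref{aln1}.

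To establish $\{x,y\} = [x,y]_N$, I would expand each of the three terms $[N(x),y]$, $[x,N(y)]$, and $-N([x,y])$ using the definition of $[-,-]$, repeatedly using that $N$ commutes with $\alpha$ and $\beta$ (and hence with $\alpha^{\pm 1}, \beta^{\pm 1}$). Collecting the contributions in which $N$ acts on the \emph{left} entries gives exactly $N(x)\cdot y + x\cdot N(y) - N(x\cdot y) = x\ast_N y$, while the contributions on the \emph{right} entries, after setting $u = \alpha^{-1}\beta(y)$ and $v = \alpha\beta^{-1}(x)$, assemble into $u\cdot N(v) + N(u)\cdot v - N(u\cdot v) = u\ast_N v = \alpha^{-1}\beta(y)\ast_N\alpha\beta^{-1}(x)$. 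Subtracting yields precisely $\{x,y\}$. This is the step where a little care is needed, but it is pure bookkeeping: it uses neither associativity nor the Nijenhuis relation, only linearity and the commutation of $N$ with the structure maps.

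An alternative, more conceptual route first shows that $(A, \ast_N, \alpha, \beta)$ is itself a regular BiHom-associative algebra, namely the specialization of Proposition \ref{tns} to the case $\dashv=\vdash=\cdot$, where the Nijenhuis identity rewritten as $N(x)\cdot N(y) = N(x\ast_N y)$ reduces the twisted associativity of $\ast_N$ to that of $\cdot$; one then applies Lemma \ref{cll} to $(A, \ast_N, \alpha, \beta)$, whose BiHom-commutator is exactly $\{-,-\}$. (The bijectivity of $\alpha$ and $\beta$ needed for Lemma \ref{cll} is already implicit, since the formula for $\{-,-\}$ invokes $\alpha^{-1}$ and $\beta^{-1}$.) I expect the main, and really the only, obstacle in either approach to be keeping the structure maps $\alpha^{\pm 1}, \beta^{\pm 1}$ correctly placed; the Lie structure itself is forced once $\ast_N$ is identified as the deformed associative product whose commutator is being taken.
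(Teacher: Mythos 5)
Your proposal is correct, and in fact it contains the paper's own argument as its ``alternative route'': the paper proves this corollary exactly by noting that $(A,\ast_N,\alpha,\beta)$ is a BiHom-associative algebra (by the same computation as Proposition \ref{tns}, specialized to $\dashv=\vdash=\cdot$) and then applying Lemma \ref{cll} to its BiHom-commutator. Your primary route is genuinely different: you bypass any re-verification of associativity by checking the purely formal identity $\{x,y\}=[x,y]_N$ (using only bilinearity and $N\alpha^{\pm1}=\alpha^{\pm1}N$, $N\beta^{\pm1}=\beta^{\pm1}N$) and then citing Corollary \ref{aln1}; this identity is precisely what the paper records, without proof, in the Remark immediately following the corollary. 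What each approach buys: your reduction is shorter and isolates the bookkeeping in a computation that needs neither associativity nor the Nijenhuis relation, but it outsources the real analytic content to Corollary \ref{aln1} (which in turn silently requires checking that an associative Nijenhuis operator is a Nijenhuis operator for the commutator bracket of $L(A)$ --- a short verification using $N(x)\cdot N(y)=N(x\ast_N y)$ that neither you nor the paper writes out); the paper's route is self-contained modulo Proposition \ref{tns} and Lemma \ref{cll}. Your parenthetical observation that regularity of $\alpha,\beta$ is implicitly required (since $\{-,-\}$ involves $\alpha^{-1},\beta^{-1}$ and Lemma \ref{cll} is stated for regular algebras) is a point the paper's statement glosses over, and is worth keeping.
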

\begin{proof}
% We only need to show that $(A, \alpha, \beta, \ast_N)$ is a BiHom-associative algebra.For all $x, y, z\in A$, one has
% \begin{eqnarray}
%  fgg
% \end{eqnarray}
It is similar to the one of Proposition  \ref{tns}. And the Lemma \ref{cll} will end the proof.
\end{proof}

\begin{remark}
 The BiHom-Lie algebra generated by Corollary \ref{aln1} and Corollary \ref{aln2} are equal.
\end{remark}

\begin{proposition}
 Let $(D, \dashv, \vdash, \alpha, \beta)$ be a BiHom-associative dialgebra. Then,for all $x, y\in D$, the bracket
$$[x, y]=[x, y]_L+[x, y]_R,$$
where
\begin{eqnarray}
 [x, y]_L&=&x\dashv y-\alpha^{-1}\beta(y)\dashv\alpha\beta^{-1}(x),\nonumber\\
{[x, y]_R}&=&x\vdash y-\alpha^{-1}\beta(y)\vdash\alpha\beta^{-1}(x),\nonumber
\end{eqnarray}
is a BiHom-Lie bracket if and only if 
\begin{eqnarray}
\alpha(x)\dashv(y\vdash z)=(x\dashv y)\vdash\beta(z),\label{dil1}\\
\alpha(x)\dashv(y\dashv z)=(x\vdash y)\vdash\beta(z).\label{dil2}
\end{eqnarray}
\end{proposition}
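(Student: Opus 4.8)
The plan is to read the total bracket as a single BiHom-commutator. Putting $x\ast y:=x\dashv y+x\vdash y$ and adding the two displayed pieces gives
\[
[x,y]=x\ast y-\alpha^{-1}\beta(y)\ast\alpha\beta^{-1}(x),
\]
so $[\cdot,\cdot]$ is exactly the commutator attached to $(D,\ast,\alpha,\beta)$ in the sense of Lemma~\ref{cll} (here $\alpha,\beta$ are invertible, as the formula requires). First I would clear away the three cheap BiHom-Lie axioms. The compatibility $\alpha\beta=\beta\alpha$ is inherited from $D$; multiplicativity of $\alpha,\beta$ for $[\cdot,\cdot]$ follows from their multiplicativity for $\dashv$ and $\vdash$ together with $\alpha^{-1}\beta\alpha=\beta$; and, since $\alpha,\beta$ are commuting automorphisms, the inner bracket collapses to $[\beta(x),\alpha(y)]=\beta(x)\ast\alpha(y)-\beta(y)\ast\alpha(x)$, which is anti-invariant under $x\leftrightarrow y$, so BiHom-skew-symmetry holds for free. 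None of this uses (\ref{dil1}) or (\ref{dil2}); the entire content sits in the BiHom-Jacobi identity, i.e.\ in the BiHom-Lie-admissibility of $(D,\ast,\alpha,\beta)$.

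Next I would compute the BiHom-associator $as(u,v,w):=(u\ast v)\ast\beta(w)-\alpha(u)\ast(v\ast w)$. Expanding $u\ast v$ and $v\ast w$ into four products each and feeding in (\ref{eq4})--(\ref{eq8}) collapses the eight resulting monomials into three distinct values, leaving after a short bookkeeping
\[
as(u,v,w)=(u\dashv v)\vdash\beta(w)-(u\dashv v)\dashv\beta(w).
\]
By (\ref{eq5}) the second term is $\alpha(u)\dashv(v\vdash w)$, so $as\equiv0$ is literally (\ref{dil1}); using (\ref{eq4}) with (\ref{eq7})--(\ref{eq8}) shows it is equally (\ref{dil2}). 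Hence both conditions reduce to the single statement $as\equiv0$, that is, to $(D,\ast,\alpha,\beta)$ being a genuine BiHom-associative algebra. This settles the ``if'' direction immediately: when (\ref{dil1})--(\ref{dil2}) hold, $\ast$ is BiHom-associative and Lemma~\ref{cll} yields the BiHom-Lie bracket.

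For the converse I would expand the Jacobiator $J(x,y,z)$ directly, using the simplified inner bracket and then the outer one. The twelve monomials pair off --- each right-nested term with the matching left-nested term from another cyclic slot --- into six copies of $as$ at twisted arguments, so that $J$ equals (up to sign) the total antisymmetrization of $\Phi(x,y,z):=as(\alpha^{-1}\beta^2(x),\beta(y),\alpha(z))$. Grouping these by the common third slot and using bilinearity of $\dashv,\vdash$ rewrites $J$ as a cyclic sum of $as$ evaluated at $\dashv$-commutators in the first two arguments. The hard part will be exactly this passage: a priori $J\equiv0$ only forces the antisymmetrization of $\Phi$ to vanish, which is weaker than $as\equiv0$. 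To close the gap I would specialize the arguments $x,y,z$ so as to isolate a single block --- exploiting that the three cyclic contributions carry their $z$, $y$, $x$ dependence in distinct slots of $G(u,w):=u\vdash\beta(w)-u\dashv\beta(w)$ --- and read off $as\equiv0$, hence (\ref{dil1}) and (\ref{dil2}). This extraction from the antisymmetrized identity to the full vanishing of the associator is the only genuinely delicate step; the rest is bilinear bookkeeping organized by (\ref{eq4})--(\ref{eq8}).
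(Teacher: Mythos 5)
Your treatment of the ``if'' direction is correct and takes a genuinely different, more conceptual route than the paper. Writing $x\ast y=x\dashv y+x\vdash y$, the eight monomials of the BiHom-associator $as(u,v,w)=(u\ast v)\ast\beta(w)-\alpha(u)\ast(v\ast w)$ do collapse under (\ref{eq4})--(\ref{eq8}) to the single residue $(u\dashv v)\vdash\beta(w)-(u\dashv v)\dashv\beta(w)$, and each of (\ref{dil1}) and (\ref{dil2}) is, modulo (\ref{eq4})--(\ref{eq8}), equivalent to the vanishing of that residue; so the two displayed hypotheses are in fact equivalent to one another and to BiHom-associativity of $\ast$, after which Lemma \ref{cll} produces the BiHom-Lie bracket (granting, as the formula for the bracket already forces, that the dialgebra is regular and multiplicative). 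The paper instead expands the BiHom-Jacobi identity into $48$ monomials and tracks which of them cancel by which axiom; your route replaces that bookkeeping with one eight-term computation and, as a bonus, exposes the redundancy of imposing both (\ref{dil1}) and (\ref{dil2}).

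The gap is exactly where you flag it, and it is a real one: the ``only if'' direction. The Jacobiator of a commutator bracket is the total antisymmetrization of the associator at twisted arguments, and with $as(u,v,w)=G(u\dashv v,w)$, $G(u,w)=u\vdash\beta(w)-u\dashv\beta(w)$, the antisymmetrization groups the first two slots into a $\dashv$-commutator; already in the untwisted case $\alpha=\beta=\mathrm{id}$ one gets $J(x,y,z)=\sum_{\mathrm{cyc}}G(x\dashv y-y\dashv x,\,z)$, whose vanishing is a priori strictly weaker than $G(u\dashv v,w)=0$ for all $u,v,w$ (for instance it holds vacuously whenever $\dashv$ is commutative). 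Your proposed remedy --- specializing $x,y,z$ to isolate one cyclic block --- is not carried out, and it is not clear it can be, since each block depends on all three variables and there is no evident substitution killing two blocks but not the third. So as written the converse is unproved. You should know that the paper's own proof shares this defect: its term count only verifies that the residual $12$ terms cancel \emph{when} (\ref{dil1})--(\ref{dil2}) hold, i.e.\ sufficiency. Either the necessity claim needs an actual argument (or a counterexample showing it fails), or the statement should be read as the ``if'' direction only, which your associator computation establishes cleanly.
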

\begin{proof} It is essentialy based on Lemma \ref{cll}. That is,
 an expansion of BiHom-Jacobi identity leads to $48$ terms including $8$ terms which cancel pairewise by axiom (\ref{eq4}), $4$ terms
cancel pairewise by axiom (\ref{eq5}), $12$ terms cancel pairewise by axiom (\ref{eq6}), $6$ terms cancel pairewise by axiom (\ref{eq7}) and 
$6$ terms cancel pairewise by axiom (\ref{eq8}).\\
For the of the $12$ terms, $8$ terms cancel pairewise by axiom (\ref{dil1}) and $4$ terms cancel pairewise by axiom (\ref{dil2}).
\end{proof}

\begin{definition}\label{Leib}
A (right ) BiHom-Leibniz algebra is a $4$-tuple $(L, \left[\cdot, \cdot\right], \alpha, \beta)$, where L is a linear space, $\left[\cdot, \cdot\right] : L \times L\rightarrow L$
is a bilinear map and $\alpha, \beta : L\rightarrow L$ are linear maps satisfying 
\begin{equation}
\left[\left[x, y\right], \alpha\beta(z),\right]=\left[\left[x, \beta(z)\right], \alpha(y)\right]+\left[\alpha(x),\left[y, \alpha(z)\right]\right], 
\end{equation}
for all $x, y, z\in L$. 
\end{definition}

\begin{example}
 Let $L$ be a two-dimensional vector space and $\left\{e_1, e_2\right\}$ be a basis of $L$. Then, $(L, [-, -], \alpha, \beta)$ is a
BiHom-Leibniz algebra with
$$\left[e_1, e_2\right]=ae_1,
\left[e_2, e_2\right]=be_1,\;
\alpha(e_2)=\beta(e_2)=e_1, 
 a, b\in \mathbb{R}.$$
\end{example}

Now, we introduce BiHom-Poisson dialgebras and study its connection with BiHom-associative dialgebras.

\begin{definition}\label{pois}
A BiHom-Poisson dialgebra is a BiHom-associative dialgebra $({P}, \dashv, \vdash,  \alpha, \beta )$ and a BiHom-Leibniz algebra
$({P}, [-, -], \alpha, \beta )$ such that
\begin{eqnarray}
{[x\dashv y, \alpha\beta(z)]}&=&\alpha(x)\dashv[y, \alpha(z)]+[x, \beta(z)]\dashv\alpha(y),\nonumber\\
{[x\vdash y, \alpha\beta(z)]}&=&\alpha(x)\vdash[y, \alpha(z)]+[x, \beta(z)]\vdash\alpha(y),\nonumber\\
\{\alpha\beta(x), y\dashv z\}&=&\beta(y)\vdash[\alpha(x), z]+[\beta(x), y]\dashv\beta(z)=[\alpha\beta(x), y\vdash z],\nonumber
\end{eqnarray}
are satisfied for $x, y, z\in{P}$.
\end{definition}

\begin{theorem}
Let $({D}, \dashv, \vdash, \alpha, \beta)$ be a BiHom-associative dialgebra. Then, 
$$P(D)=(D,[-, -],\dashv, \vdash, \alpha, \beta)$$
 is a BiHom-Poisson dialgebra, where  $[x, y]=x \dashv y-y\vdash x$,  for any  $x, y\in {D}$.
\end{theorem}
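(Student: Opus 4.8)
The plan is to verify the two ingredients demanded by Definition \ref{pois} separately: first that $(D,[-,-],\alpha,\beta)$ with $[x,y]=x\dashv y-y\vdash x$ is a BiHom-Leibniz algebra in the sense of Definition \ref{Leib}, and then that the three compatibility identities relating $[-,-]$ to the products $\dashv$ and $\vdash$ hold. The model throughout is Loday's classical observation that the commutator $x\dashv y-y\vdash x$ of an associative dialgebra satisfies the right Leibniz identity; the present statement is its BiHom deformation, so the only genuinely new input concerns the placement of the structure maps $\alpha$ and $\beta$.

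For the BiHom-Leibniz identity $[[x,y],\alpha\beta(z)]=[[x,\beta(z)],\alpha(y)]+[\alpha(x),[y,\alpha(z)]]$, I would substitute the definition of the bracket and expand each of the three outer brackets into four products, obtaining four terms on the left and eight on the right. I would then use $\alpha\beta=\beta\alpha$ to rewrite every occurrence of $\alpha\beta(z)$ as $\beta(\alpha(z))$, so that the associativity axioms become applicable, and reduce each term to a common left-normed shape. Concretely, a $\dashv$-over-$\dashv$ pattern is handled by \eqref{eq4}, a $\vdash$-over-$\dashv$ pattern by \eqref{eq6}, a $\vdash$-over-$\vdash$ pattern by \eqref{eq8}, and the mixed patterns by \eqref{eq5} and \eqref{eq7}; after these rewrites the twelve terms should organize into six canceling pairs, exactly as in the untwisted Loday computation.

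For the three compatibility relations I would proceed identically: expand the bracket wherever it appears, for instance $[x\dashv y,\alpha\beta(z)]=(x\dashv y)\dashv\alpha\beta(z)-\alpha\beta(z)\vdash(x\dashv y)$ on the left, expand the right-hand sides through $[y,\alpha(z)]$ and $[x,\beta(z)]$, and reduce both sides to a common normal form by one application each of the relevant axioms. The first two identities are interchanged by the formal duality $\dashv\leftrightarrow\vdash$ together with \eqref{eq7} and \eqref{eq8}, so it suffices to treat one of them in detail and the third follows by the same mechanism.

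The main obstacle I anticipate is not conceptual but the discipline of tracking the twisting maps: the axioms \eqref{eq4}--\eqref{eq8} require a $\beta$ on the rightmost factor and produce an $\alpha$ on the leftmost one, so each reduction forces a particular variable to be pre-composed with $\alpha$ or $\beta$, and one must repeatedly invoke $\alpha\beta=\beta\alpha$ to make the twists on $z$ agree before the terms can be paired off. Keeping these insertions consistent is the delicate point, and it is precisely the kind of many-term cancellation already carried out for the BiHom-Lie bracket earlier in the paper; the same careful bookkeeping, ideally recorded in a table listing which axiom annihilates which pair, is what makes the computation go through.
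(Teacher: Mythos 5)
Your overall strategy---expand every bracket into $\dashv$- and $\vdash$-terms and cancel them in pairs via \eqref{eq4}--\eqref{eq8}---is the same one the paper uses, but there is a genuine gap: with the bracket taken literally as $[x,y]=x\dashv y-y\vdash x$, the cancellation you are counting on does not occur. The axioms \eqref{eq4}--\eqref{eq8} always consume a $\beta$ on the rightmost factor and produce an $\alpha$ on the leftmost one, and for the naive commutator these twists do not match up. To see this concretely, test the first compatibility identity $[x\dashv y,\alpha\beta(z)]=\alpha(x)\dashv[y,\alpha(z)]+[x,\beta(z)]\dashv\alpha(y)$: the left side contributes $-\alpha\beta(z)\vdash(x\dashv y)=-(\beta(z)\vdash x)\dashv\beta(y)$ by \eqref{eq6}, while the right side contributes $-(\beta(z)\vdash x)\dashv\alpha(y)$; likewise the right side contains $-\alpha(x)\dashv(\alpha(z)\vdash y)=-(x\dashv\alpha(z))\dashv\beta(y)$ by \eqref{eq5} against $+(x\dashv\beta(z))\dashv\alpha(y)$. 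These pairs only annihilate when $\alpha=\beta$, so the identity fails for a genuine BiHom structure, and the same mismatch already kills the BiHom-Leibniz identity for this bracket.

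What the paper actually does (despite the wording of the statement) is work with the twisted bracket $[x,y]=x\dashv y-\alpha^{-1}\beta(y)\vdash\alpha\beta^{-1}(x)$, inherited from Theorem \ref{dilei} and modeled on Lemma \ref{cll}; this forces the additional hypothesis that $\alpha$ and $\beta$ be bijective, i.e.\ that the dialgebra be regular. The pre-compositions $\alpha^{-1}\beta$ and $\alpha\beta^{-1}$ are exactly what is needed to make the twists produced by \eqref{eq4}--\eqref{eq8} agree, after which the Loday-style pairing of terms that you describe does go through. So before starting the bookkeeping you outline, you must correct the bracket and add the regularity assumption; you could also shorten the work by citing Theorem \ref{dilei} for the BiHom-Leibniz part rather than re-deriving it, which is what the paper does, leaving only the three compatibility relations of Definition \ref{pois} to check by direct expansion.
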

\begin{proof}
By Theorem \ref{dilei} $P(D)$ is a BiHom-Leibniz algebra. Moreover, for any $x, y, z\in D$,
\begin{eqnarray}
&& [x\dashv y, \alpha\beta(z)]-\alpha(x)\dashv[y, \alpha(z)]-[x, \beta(z)]\dashv\alpha(y)=\nonumber\\
&&=(x\dashv y)\dashv\alpha\beta(z)-\alpha^{-1}\beta\alpha\beta(z)\vdash\alpha\beta^{-1}(x\dashv y)
-\alpha(x)\dashv(y\dashv\alpha(z)-\alpha^{-1}\beta\alpha(z)\vdash\alpha\beta^{-1}(y))\nonumber\\
&&-(x\dashv\beta(z)-\alpha^{-1}\beta\beta(z)\vdash\alpha\beta^{-1}(x))\dashv\alpha(y)\nonumber\\
&&=(x\dashv y)\dashv\alpha\beta(z)-\beta^2(z)\vdash(\alpha\beta^{-1}(x)\dashv \alpha\beta^{-1}(y))-\alpha(x)\dashv(y\dashv\alpha(z)\nonumber\\
&&\quad+\alpha(x)\dashv(\beta(z)\vdash\alpha\beta^{-1}(y))-
(x\dashv\beta(z))\dashv\alpha(y)+(\alpha^{-1}\beta^2(z)\vdash\alpha\beta^{-1}(x))\dashv\alpha(y)\nonumber.
\end{eqnarray}
The last three axioms are proved analagously.  This completes the proof.
\end{proof}

\begin{theorem}
 Let $(P, \dashv, \vdash, [-, -], \alpha, \beta )$ be a  BiHom-Poisson dialgebra and $\alpha', \beta' : D\rightarrow D$ two morphisms of 
BiHom-Poisson dialgebras such that the maps $\alpha, \alpha', \beta, \beta'$ commute pairewise. Then
$$P_{(\alpha', \beta')}=(D, \, \triangleleft:=\dashv(\alpha'\otimes\beta'),\; \triangleright:=\vdash(\alpha'\otimes\beta'),\; 
\{-, -\}:=[-, -](\alpha'\otimes\beta'),\; \alpha\alpha',\; \beta\beta'),$$
is a  BiHom-Poisson dialgebra. 
\end{theorem}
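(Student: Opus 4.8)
The plan is to verify separately the three ingredients required by Definition \ref{pois}: first, that $(D, \triangleleft, \triangleright, \alpha\alpha', \beta\beta')$ is a BiHom-associative dialgebra; second, that $(D, \{-,-\}, \alpha\alpha', \beta\beta')$ is a BiHom-Leibniz algebra; and third, that the three Leibniz--dialgebra compatibility identities of Definition \ref{pois} survive the twist. Throughout I will use that a morphism of BiHom-Poisson dialgebras is, by definition, a morphism of both the underlying BiHom-associative dialgebra and the underlying BiHom-Leibniz algebra, so $\alpha'$ and $\beta'$ commute with $\alpha, \beta$ and preserve $\dashv$, $\vdash$ and $[-,-]$.

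Part one is immediate. Since $(\alpha', \beta')$ are in particular morphisms of the BiHom-associative dialgebra $(D, \dashv, \vdash, \alpha, \beta)$ commuting pairwise with $\alpha, \beta$, Theorem \ref{tw} applies verbatim and produces exactly $(D, \triangleleft, \triangleright, \alpha\alpha', \beta\beta')$ as a BiHom-associative dialgebra.

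For the Leibniz part I would carry out a direct computation mirroring the proof of Theorem \ref{tw}. Writing $\{x,y\} = [\alpha'(x), \beta'(y)]$, I expand both sides of the twisted BiHom-Leibniz identity (Definition \ref{Leib}) with structure maps $\alpha\alpha', \beta\beta'$. Because $\alpha'$ and $\beta'$ are bracket morphisms, each nested bracket spawns an extra outer factor $\alpha'$ or $\beta'$; pairwise commutativity of $\alpha, \alpha', \beta, \beta'$ then lets me absorb those factors into the arguments, so that the twisted identity reduces to the original BiHom-Leibniz identity evaluated at the twisted arguments. This is the Leibniz analogue of Theorem \ref{tw}, and its verification follows the same mechanical pattern.

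Part three is the heart of the matter and the step I expect to be the main obstacle, purely because of the bookkeeping of composed operators. For each compatibility identity I would use the same scheme: on a term of the form $\alpha\alpha'(x)\triangleleft\{y,\alpha\alpha'(z)\}$ pull $\beta'$ out of the inner bracket using the morphism property of $\beta'$, and on a term of the form $\{x\triangleleft y,\,\cdot\,\}$ pull $\alpha'$ out of the product using the morphism property of $\alpha'$. After this each term carries a prefix composed from $\alpha'$ and $\beta'$, and I apply the relevant original identity from Definition \ref{pois} to arguments such as $\alpha'\alpha'(x)$, $\alpha'\beta'(y)$, $\alpha'\beta'\beta'(z)$, repeatedly invoking pairwise commutativity of the four maps to align the composed operators term by term. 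The only genuinely delicate case is the third condition, which is a chain of two equalities interlacing $\dashv$, $\vdash$ and the bracket, so each equality must be twisted and matched separately, but the underlying mechanism is identical. Once all three are confirmed, the twisted data satisfies every clause of Definition \ref{pois}, so $P_{(\alpha', \beta')}$ is a BiHom-Poisson dialgebra.
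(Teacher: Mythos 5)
Your proposal is correct and follows exactly the route the paper intends: the paper's own proof is the single remark that the argument "is essentially based on the one of Theorem \ref{tw}," i.e.\ the same pull-out-the-morphisms-and-recognize-the-original-identity-at-twisted-arguments computation applied to the Leibniz identity and the three compatibility conditions, which is precisely what you describe (and your reduction to the original identities evaluated at $\alpha'\alpha'(x)$, $\alpha'\beta'(y)$, $\alpha'\beta'\beta'(z)$ does go through). Nothing further is needed.
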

\begin{proof}
 It is essentialy based on the one of Theorem \ref{tw}.
\end{proof}

Now, we introduce action of BiHom-Leibniz algebra on another one.

\begin{definition}
Let $D$ and $L$ be two BiHom-Leibniz algebras. An action of $D$ on $L$ consists of a pair of bilinear maps, 
$D\times L\rightarrow L, (x, a)\mapsto [x, a]$ and $L\times D\rightarrow [x, a]$, such that
\begin{eqnarray}\label{le}
\left[\alpha(x),\left[a, \alpha(b)\right]\right]&=&\left[\left[x, a\right], \alpha\beta(b),\right]
-\left[\left[x, \beta(b)\right], \alpha(a)\right]\label{la1}\\
\left[\alpha(a),\left[x, \alpha(b)\right]\right]&=&\left[\left[a, x\right], \alpha\beta(b),\right]
-\left[\left[a, \beta(b)\right], \alpha(x)\right]\\
\left[\alpha(a),\left[b, \alpha(x)\right]\right]&=&\left[\left[a, b\right], \alpha\beta(x),\right]
-\left[\left[a, \beta(x)\right], \alpha(b)\right]\\
\left[\alpha(a),\left[x, \alpha(y)\right]\right]&=&\left[\left[a, x\right], \alpha\beta(y),\right]
-\left[\left[a, \beta(y)\right], \alpha(x)\right]\\
\left[\alpha(x),\left[a, \alpha(y)\right]\right]&=&\left[\left[x, a\right], \alpha\beta(y),\right]
-\left[\left[x, \beta(y)\right], \alpha(a)\right]\\
\left[\alpha(x),\left[y, \alpha(a)\right]\right]&=&\left[\left[x, y\right], \alpha\beta(a),\right]
-\left[\left[x, \beta(a)\right], \alpha(y)\right]
\end{eqnarray}
for all $x, y\in D, a, b\in L$.
\end{definition}

\begin{lemma}
Given a BiHom-Leibniz action of $D$ on $L$, we can consider the  {\it semidirect product} Leibniz algebra $L\rJoin D$, which consists of
vector space $D\oplus L$ together with the Leibniz bracket given by
\begin{eqnarray}
 [(x, a), (y, b)]=([x, y]+[x, b]+[a, y], [a, b])
\end{eqnarray}
for all $(x, a), (x, b)\in D\times L$.
\end{lemma}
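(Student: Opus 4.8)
The plan is to equip the underlying space $D\oplus L$ with the componentwise structure maps $\widehat{\alpha},\widehat{\beta}:D\oplus L\to D\oplus L$ given by $\widehat{\alpha}(x,a)=(\alpha(x),\alpha(a))$ and $\widehat{\beta}(x,a)=(\beta(x),\beta(a))$, where $\alpha,\beta$ denote the structure maps of the relevant factor, and then to verify directly that $(D\oplus L,[-,-],\widehat{\alpha},\widehat{\beta})$ satisfies Definition \ref{Leib}. Here I read the defining bracket so that its $D$-valued part is $[x,y]$ and its $L$-valued part is $[a,b]+[x,b]+[a,y]$, so that each component genuinely lands in the correct summand. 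The commutation $\widehat{\alpha}\widehat{\beta}=\widehat{\beta}\widehat{\alpha}$ is immediate, since it holds separately on $D$ and on $L$.

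First I would fix three generic elements $X=(x,a)$, $Y=(y,b)$, $Z=(z,c)$ of $D\oplus L$ and expand the BiHom-Leibniz identity
$$[[X,Y],\widehat{\alpha}\widehat{\beta}(Z)]=[[X,\widehat{\beta}(Z)],\widehat{\alpha}(Y)]+[\widehat{\alpha}(X),[Y,\widehat{\alpha}(Z)]]$$
by substituting the bracket twice on each side and projecting onto the two summands. The $D$-component collapses at once to $[[x,y],\alpha\beta(z)]=[[x,\beta(z)],\alpha(y)]+[\alpha(x),[y,\alpha(z)]]$, which is exactly the BiHom-Leibniz identity already available on $D$; so that half is free.

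The substance of the argument is the $L$-component. After full expansion each side is a sum of nested brackets, and I would organize these terms according to which of the three arguments $X,Y,Z$ contributes its $D$-part ($x,y,z$) and which contributes its $L$-part ($a,b,c$). Of the $2^{3}$ possible patterns the purely $D$ one lives in the $D$-component already disposed of, leaving seven families in the $L$-component. The purely $L$ family $(a,b,c)$ reassembles into the BiHom-Leibniz identity of $L$ and hence cancels. Each of the six remaining mixed families matches, after the obvious renaming of variables, exactly one of the six defining identities of the action (the first of which is (\ref{la1})): for instance the family built from $x,b,z$ reproduces (\ref{la1}), the family built from $x,y,c$ reproduces the sixth such identity, and likewise for the families $a,y,z$; $a,b,z$; $a,y,c$; $x,b,c$, so that every mixed family cancels.

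The one place demanding care is the bookkeeping: the $L$-component produces on the order of twenty nested-bracket terms, and the verification stands or falls on pairing each mixed family with the correct action axiom under the right substitution. Once the seven-family decomposition is set up, the cancellations are forced and the computation is routine; this term-matching is the step I expect to be the main, though purely computational, obstacle.
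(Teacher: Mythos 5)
Your proposal is correct and follows essentially the same route as the paper: expand the BiHom--Leibniz identity on $D\oplus L$, sort the resulting nested brackets by which of the three arguments contributes its $D$-part versus its $L$-part, and cancel the purely-$D$ and purely-$L$ families against the Leibniz identities of $D$ and $L$ while each of the six mixed families cancels against one of the six action axioms. The only slip is cosmetic: the family built from $x,b,c$ (pattern $D,L,L$) is the one matching (\ref{la1}), while $x,b,z$ (pattern $D,L,D$) matches the fifth axiom of the action; since all six mixed families are accounted for, this does not affect the argument.
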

\begin{proof}
 \begin{eqnarray}
  [\alpha(x, a), [(y, b), \alpha(z, c)]]
&=&[(\alpha(x), \alpha(a)), ([y, \alpha(z)]+[y, \alpha(c)]+[b, \alpha(z)], [b, \alpha(c)])]\nonumber\\
&=&\Big([\alpha(x), [y, \alpha(z)]]+[\alpha(x), [y, \alpha(c)]]+[\alpha(x), [b, \alpha(z)]]+[\alpha(x), [b, \alpha(c)]]\nonumber\\
&&+[\alpha(a), [y, \alpha(z)]]+[\alpha(a), [y, \alpha(z)]]+[\alpha(a), [y, \alpha(c)]]
+[\alpha(a), [b, \alpha(z)],\nonumber\\
&& [\alpha(a), [b, \alpha(c)]]\Big).\nonumber\\
%  \end{eqnarray}
%  \begin{eqnarray}
  {[[(x, a), (y, b)], \alpha\beta(z, c)]}&=&[([x, y]+[x, b]+[a, y]), [a, b]), (\alpha\beta(z), \alpha\beta(c))]\nonumber\\
&=&([[x, y], \alpha\beta(z)]+[[x, b], \alpha\beta(z)]+[[a, y], \alpha\beta(z)]+[[x, y], \alpha\beta(c)]\nonumber\\
&&+[[x, b], \alpha\beta(c)]
+[[a, y], \alpha\beta(c)]+[[a, b], \alpha\beta(c)], [[a, b], \alpha\beta(c)].\nonumber\\
%  \end{eqnarray}
% \begin{eqnarray}
 {[[(x, a), \beta(z, c)], \alpha(y, b)]}
&=&[([x, \beta(z)]+[x, \beta(c)]+[a, \beta(z)], [a, \beta(c)]), (\alpha(y), \alpha(b))]\nonumber\\
&=&([[x, \beta(z)], \alpha(y)]+[[x, \beta(c)], \alpha(y)]+[[a, \beta(z)], \alpha(y)]+[[x, \beta(z)], \alpha(b)]\nonumber\\
&&+[[x, \beta(c)], \alpha(b)]+[[a, \beta(z)], \alpha(b)]+[[a, \beta(c)], \alpha(y)], [[a, \beta(c)], \alpha(b)])\nonumber.
\end{eqnarray}
Using axioms in Definition \ref{le}, it follows that 
$${[[(x, a), (y, b)], \alpha\beta(z, c)]}={[[(x, a), \beta(z, c)], \alpha(y, b)]}+[\alpha(x, a), [(y, b), \alpha(z, c)]].$$
Which proves the proposition.
\end{proof}

\begin{theorem}\label{dilei}
Let $({D}, \dashv, \vdash, \alpha, \beta)$ be a regular BiHom-associative dialgebra. Then the bracket defined by
 $\left[x, y\right]=x \dashv y-\alpha^{-1}\beta(y)\vdash\alpha\beta^{-1}(x)$, defines a structure of  BiHom-Leibniz algebra on ${D}$, and denoted
${\bf Lb}(D)$.
\end{theorem}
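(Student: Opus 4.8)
The plan is to check the single defining axiom of a (right) BiHom-Leibniz algebra (Definition~\ref{Leib}) for the bracket $[x,y]=x\dashv y-\alpha^{-1}\beta(y)\vdash\alpha\beta^{-1}(x)$, after recording that $\alpha,\beta$ stay compatible with it. Since $D$ is regular, $\alpha$ and $\beta$ are commuting automorphisms that are multiplicative for both $\dashv$ and $\vdash$, so $\alpha\beta=\beta\alpha$ is inherited and a one-line computation (absorbing the inverse twists via $\alpha^{-1}\beta\alpha=\beta$) gives $\alpha([x,y])=[\alpha(x),\alpha(y)]$ and $\beta([x,y])=[\beta(x),\beta(y)]$. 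Regularity is essential here, since the bracket itself involves $\alpha^{-1}$ and $\beta^{-1}$.

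First I would substitute the bracket into the three expressions $[[x,y],\alpha\beta(z)]$, $[[x,\beta(z)],\alpha(y)]$ and $[\alpha(x),[y,\alpha(z)]]$, expanding each nested bracket into two summands and simplifying the accumulated twists by multiplicativity and commutativity (for instance $\alpha^{-1}\beta(\alpha\beta(z))=\beta^{2}(z)$ and $\alpha\beta^{-1}(x\dashv y)=\alpha\beta^{-1}(x)\dashv\alpha\beta^{-1}(y)$). This produces four monomials from the left-hand side and four from each right-hand bracket; moving everything to one side leaves exactly twelve monomials, each of the shape $(\,\cdot\ast\cdot\,)\ast'\,\cdot$ or $\cdot\ast(\,\cdot\ast'\cdot\,)$ with $\ast,\ast'\in\{\dashv,\vdash\}$ and definite twists on each slot.

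The core of the argument is then to pair these twelve monomials so that each pair is an instance of one dialgebra axiom. The two purely left monomials $(x\dashv y)\dashv\alpha\beta(z)$ and $-\alpha(x)\dashv(y\dashv\alpha(z))$ cancel by \eqref{eq4}, reading $\alpha\beta(z)=\beta(\alpha(z))$; the pair yielding $\pm(x\dashv\beta(z))\dashv\alpha(y)$ cancels by \eqref{eq5}; two further pairs are matched after rewriting a $(\,\cdot\vdash\cdot\,)\dashv\,\cdot$ term into a $\cdot\vdash(\,\cdot\dashv\cdot\,)$ term by \eqref{eq6}; and the two remaining $\vdash$-outer pairs cancel by \eqref{eq7} and \eqref{eq8}. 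In each case the axiom is applied with arguments twisted by suitable powers of $\alpha,\beta$, the invertibility of $\alpha,\beta$ ensuring that these twisted arguments exist.

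I expect the only genuine difficulty to be the bookkeeping: keeping track of the composite twists $\alpha^{\pm1}\beta^{\pm1}$ on each variable and confirming that, after an axiom is applied, the twist in every slot coincides with that of its partner. The commutativity $\alpha\beta=\beta\alpha$ and the multiplicativity of $\alpha,\beta$ are used tacitly at each step to push twists through the products. No new idea is required beyond this careful pairing, so once the twelve terms are correctly generated the cancellation is routine; finally, invoking the compatibility of $\alpha,\beta$ established at the outset shows $(D,[-,-],\alpha,\beta)$ is a BiHom-Leibniz algebra.
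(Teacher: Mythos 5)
Your proposal is correct and follows essentially the same route as the paper: expand each of the three brackets $[[x,y],\alpha\beta(z)]$, $[[x,\beta(z)],\alpha(y)]$, $[\alpha(x),[y,\alpha(z)]]$ into four monomials and match the resulting twelve terms pairwise via axioms (\ref{eq4})--(\ref{eq8}), with (\ref{eq6}) used twice; your stated pairing (for instance $(x\dashv\beta(z))\dashv\alpha(y)$ against $\alpha(x)\dashv(\beta(z)\vdash\alpha\beta^{-1}(y))$ via (\ref{eq5})) is exactly the cancellation pattern the paper's computation yields. The additional verification that $\alpha,\beta$ are multiplicative for the bracket is harmless but not required by Definition \ref{Leib}.
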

\begin{proof}
% The proof following by a straightforward computation in which the definitions \ref{Leib} and \ref{dia} are used once. In fact, 
For any $x, y, z\in {D}$, we have 
\begin{eqnarray}
[[x, y], \alpha\beta(z)]
&=&(x\dashv y-\alpha^{-1}\beta(y)\vdash\alpha\beta^{-1}(x))\dashv\alpha\beta(z)\nonumber\\
&&-\alpha^{-1}\beta\alpha\beta(z)\vdash(x\dashv y-\alpha^{-1}\beta(y)\vdash\alpha\beta^{-1})\nonumber\\
&=&(x\dashv y)\dashv\alpha\beta(z)-(\alpha^{-1}\beta(y)\vdash\alpha\beta^{-1}(x))\dashv\alpha\beta(z)\nonumber\\
&&-\beta^2(z)\vdash(\alpha\beta^{-1}(x)\dashv\alpha\beta^{-1}(y)
+\beta^2(z)\vdash(y\vdash\alpha^2\beta^{-2}(x)).\nonumber\\
% \end{eqnarray}
% \begin{eqnarray}
 {[[x, \beta(z)], \alpha(y)]}
&=&(x\dashv\beta(z)-\alpha^{-1}\beta^2(z)\vdash\alpha\beta^{-1}(x)\dashv\alpha(y)\nonumber\\
&&-\alpha^{-1}\beta\alpha(z)\vdash(\alpha\beta^{-1}(x\dashv y+\alpha^{-1}\beta(y)\vdash\alpha\beta^{-1})\nonumber\\
&=&(x\dashv\beta(z))\dashv\alpha(y)-(\alpha^{-1}\beta^2(z)\vdash\alpha\beta^{-1}(x))\dashv\alpha(y)\nonumber\\
&&-\beta(y)\vdash(\alpha\beta^{-1}(x)\dashv\alpha(z)) +\beta(y)\vdash(\beta(z)\vdash\alpha^2\beta^{-2}(x))\nonumber.\\
% \end{eqnarray}
% \begin{eqnarray}
 {[\alpha(x), [y, \alpha(z)]]}
&=&\alpha(x)\dashv(y\dashv\alpha(z)-\alpha^{-1}\beta\alpha(z)\vdash\alpha\beta^{-1}(y))
-\alpha^{-1}\beta(y\dashv\alpha(z)\nonumber\\
&&-\beta(z)\vdash\alpha\beta^{-1}(y))\vdash\alpha\beta^{-1}\alpha(x)\nonumber\\
&=&\alpha(x)\dashv(y\dashv\alpha(z))-\alpha(x)\dashv(\beta(z)\vdash\alpha\beta^{-1}(y))\nonumber\\
&&-(\alpha^{-1}\beta(y)\dashv\beta(z))\vdash\alpha^2\beta^{-1}(x) -(\alpha^{-1}\beta^2(z)\vdash y)\vdash\alpha^2\beta^{-1}(x)\nonumber.
\end{eqnarray}
By axioms in Definition \ref{dia}, the conclusion holds.
\end{proof}

In the relations contained in the below definition, we omitted the subsript for simplifying the typography.
\begin{definition}\label{act1}
Let $D$ and $L$ be dialgebras. An action of $D$ on $L$ consists of four linear maps, two of them denoted by the symbol $\dashv$ and other two by 
$\vdash$,
\begin{eqnarray}
 \dashv : D\otimes L\rightarrow L, & & \dashv : L\otimes D\rightarrow L,\nonumber\\
\dashv : D\otimes L\rightarrow L, & & \dashv : L\otimes D\rightarrow L\nonumber
\end{eqnarray}
such that the following $30$ equalities hold :\\

\begin{tabular}{lr}
${(01)}\quad (x\dashv a)\dashv\beta(b)=\alpha(x)\dashv(a\dashv b) $,
&$\qquad\qquad{(16)}\quad (a\dashv x)\dashv\beta(y)=\alpha(a)\dashv(x\dashv y)$,\\
${(02)}\quad (x\dashv a)\dashv\beta(b)=\alpha(x)\dashv(a\vdash b)$,
&$\qquad\qquad {(17)}\quad (a\dashv x)\dashv\beta(y)=\alpha(a)\dashv(x\vdash y)$,\\
${(03)}\quad (x\vdash a)\dashv\beta(b)=\alpha(x)\vdash(a\dashv b)$,
&$\qquad\qquad {(18)}\quad (a\vdash x)\dashv\beta(y)=\alpha(a)\vdash(x\dashv y)$,\\
${(04)}\quad (x\dashv a)\vdash\beta(b)=\alpha(x)\vdash(a\vdash b)$,
&$\qquad\qquad {(19)}\quad (a\dashv x)\vdash\beta(y)=\alpha(a)\vdash(x\vdash y)$,\\
${(05)}\quad (x\vdash a)\vdash\beta(b)=\alpha(x)\vdash(a\vdash b)$,
&$\qquad\qquad {(20)}\quad (a\vdash x)\vdash\beta(y)=\alpha(a)\vdash(x\vdash y)$,\\
\\
% \end{tabular} {}
% \begin{tabular}{lr}
${(06)}\quad (a\dashv x)\dashv\beta(b)=\alpha(a)\dashv(x\dashv b)$,
&$\qquad\qquad {(21)}\quad (x\dashv a)\dashv\beta(y)=\alpha(x)\dashv(a\dashv y)$,\\
${(07)}\quad (a\dashv x)\dashv\beta(b)=\alpha(a)\dashv(x\vdash b)$,
&$\qquad\qquad{(22)}\quad  (x\dashv a)\dashv\beta(y)=\alpha(x)\dashv(a\vdash y)$,\\
${(08)}\quad (a\vdash x)\dashv\beta(b)=\alpha(a)\vdash(x\dashv b)$,
&$\qquad\qquad  {(23)}\quad (x\vdash a)\dashv\beta(y)=\alpha(x)\vdash(a\dashv y)$,\\
${(09)}\quad (a\dashv x)\vdash\beta(b)=\alpha(a)\vdash(x\vdash b)$,
&$\qquad\qquad{(24)}\quad (x\dashv a)\vdash\beta(y)=\alpha(x)\vdash(a\vdash y)$,\\
${(10)}\quad (a\vdash x)\vdash\beta(b)=\alpha(a)\vdash(x\vdash b)$,
&$\qquad\qquad {(25)}\quad (x\vdash a)\vdash\beta(y)=\alpha(x)\vdash(a\vdash y)$,\\
\\
\end{tabular}

\begin{tabular}{lr}
${(11)}\quad (a\dashv b)\dashv\beta(x)=\alpha(a)\dashv(b\dashv x)$,
&$\qquad\qquad {(26)}\quad (x\dashv y)\dashv\beta(a)=\alpha(x)\dashv(y\dashv a)$,\\
${(12)}\quad (a\dashv b)\dashv\beta(x)=\alpha(a)\dashv(b\vdash x)$,
&$\qquad\qquad  {(27)}\quad (x\dashv y)\dashv\beta(a)=\alpha(x)\dashv(y\vdash a)$,\\
${(13)}\quad (a\vdash b)\dashv\beta(x)=\alpha(a)\vdash(b\dashv x)$,
&$\qquad\qquad {(28)}\quad (x\vdash y)\dashv\beta(a)=\alpha(x)\vdash(y\dashv a)$,\\
${(14)}\quad (a\dashv b)\vdash\beta(x)=\alpha(a)\vdash(b\vdash x)$,
&$\qquad\qquad{(29)}\quad  (x\dashv y)\vdash\beta(a)=\alpha(x)\vdash(y\vdash a)$,\\
${(15)}\quad (a\vdash b)\vdash\beta(x)=\alpha(a)\vdash(b\vdash x)$,
&$\qquad\qquad{(30)}\quad  (x\vdash y)\vdash\beta(a)=\alpha(x)\vdash(y\vdash a)$,
\end{tabular}
\\
\\
for all $x, y\in D, a, b\in L$. The action is called trivial if these four maps are trivial.
\end{definition}

\begin{example}
i) Any BiHom-associative dialgebra may be seen as acting on itself
 ii)Given a homomorphism $\varphi : D\rightarrow L$ of BiHom-associative dialgebras, then there is an action of $D$ on $L$ via the maps
$x\triangleleft a:=\varphi(x)\dashv a,\; x\triangleright a:=\varphi(x)\triangleright, a\; 
a\triangleleft x:=a\vdash\varphi(x)\;\;\mbox{and}\;\; a\triangleright x:=a\vdash\varphi(x)$.\\
iii)If $\psi : L\rightarrow D$ is an isomorphism of BiHom-associative dialgebras, then there is an action of $D$ on $L$ via the maps
$x\triangleleft a:=\psi^{-1}(x)\dashv a,\; x\triangleright a:=\psi^{-1}(x)\triangleright a,\; 
a\triangleleft x:=a\vdash\psi^{-1}(x)\;\;\mbox{and}\;\; a\triangleright x:=a\vdash\psi^{-1}(x)$.\\
iv) If $I$ is an ideal of $D$, then the left and the right product yield an action of $D$ on I. 
\end{example}
% \begin{remark}
% When $L$ is an abelian dialgebra, the action of $D$ on $L$ is nothing but the bimodule structure of $D$ on $L$.
% Thus, if $L$ is a bimodule over a BiHom-associative dialgebra $D$ (\cite{}, Definition ...), thought as an abelian dialgebra, 
% then the bimodule structure defines an action of $D$ on the abelian dialgebra $L$.
% \end{remark}

\begin{lemma}\label{lbs}
Given two regular BiHom-associative dialgebras $D$ and $L$ together with an action of $D$ on $L$, there is an action an action
${\bf Lb} (D)$ on ${\bf Lb} (L)$ given by
\begin{eqnarray}
 [x, a]&=&x\dashv a-\alpha^{-1}\beta(a)\dashv\alpha\beta^{-1}(x),\nonumber\\
{[a, x]}&=&a\vdash x-\alpha^{-1}\beta(x)\vdash\alpha\beta^{-1}(a),\nonumber
\end{eqnarray}
for all $x\in {\bf Lb} (D)$, $a\in {\bf Lb} (L)$.
\end{lemma}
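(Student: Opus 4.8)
The plan is to check directly that the two brackets written in the statement satisfy the six defining relations of a BiHom-Leibniz action (those listed in the Definition that precedes the semidirect-product Lemma). This is exactly the ``mixed'' counterpart of Theorem~\ref{dilei}: there the five dialgebra axioms (\ref{eq4})--(\ref{eq8}) were assembled into the single Leibniz identity on $D$, whereas here the thirty action relations $(01)$--$(30)$ of Definition~\ref{act1} play the analogous role and must be assembled into the six Leibniz-action identities. So the argument is not a new phenomenon but a larger bookkeeping version of the computation already carried out for ${\bf Lb}(D)$.

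First I would fix notation and reductions. Since $D$ and $L$ are regular, $\alpha$ and $\beta$ are bijective and commute; I would record once and for all that $\alpha^{\pm1}$ and $\beta^{\pm1}$ commute with each product $\dashv,\vdash$ and with the four action maps, so that throughout the expansions the twisting operators may be slid across freely and combined (e.g. $\alpha^{-1}\beta\circ\alpha\beta^{-1}=\mathrm{id}$). Then, for each of the six axioms, I would substitute the definitions $[x,a]=x\dashv a-\alpha^{-1}\beta(a)\dashv\alpha\beta^{-1}(x)$ and $[a,x]=a\vdash x-\alpha^{-1}\beta(x)\vdash\alpha\beta^{-1}(a)$, together with the brackets of ${\bf Lb}(D)$ and ${\bf Lb}(L)$ coming from Theorem~\ref{dilei}. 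Each side is a bracket of a bracket, so after bilinearity every identity unfolds into roughly a dozen twisted monomials; I would then match the monomials in pairs and cancel each pair by the appropriate relation among $(01)$--$(30)$, in the same spirit as the grouping ``eight by (\ref{eq4}), four by (\ref{eq5}), \dots'' used earlier in the paper.

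The genuinely laborious step, and the only real obstacle, is this combinatorial matching: one must correctly read off, for each pair of monomials, which of the thirty action relations sends their difference to zero, while carrying along the $\alpha^{\pm1},\beta^{\pm1}$ decorations produced by the Leibniz brackets. No idea beyond Theorem~\ref{dilei} is required; it is precisely the regularity of $D$ and $L$ that licenses moving the twisting maps so that each monomial is brought into the shape occurring on one side of some relation $(0k)$, after which the cancellation is immediate. Once the six identities are verified, the pair of brackets is by definition a BiHom-Leibniz action of ${\bf Lb}(D)$ on ${\bf Lb}(L)$, which is the assertion.
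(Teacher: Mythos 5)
Your plan coincides with the paper's own proof: the authors expand one of the six Leibniz-action identities for the mixed brackets, cancel the resulting twisted monomials pairwise using the relevant action relations among $(01)$--$(30)$ (cited there as the analogues of (\ref{eq5}) and (\ref{eq7})), and declare the remaining five identities analogous. Your description of the bookkeeping --- sliding $\alpha^{\pm1},\beta^{\pm1}$ across the products by regularity and matching each pair of monomials to one of the thirty relations --- is exactly what that computation amounts to, so the approach is essentially the same.
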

\begin{proof}
For all $x\in {\bf Lb} (D)$, $a\in {\bf Lb} (L)$,
\begin{eqnarray}
[[x, a], \alpha\beta(b)]
&=&(x\dashv a-\alpha^{-1}\beta(a)\vdash\alpha\beta^{-1}(x))\dashv\alpha\beta(b)\nonumber\\
&&-\alpha^{-1}\beta\alpha\beta(b)\vdash\alpha\beta^{-1}(x\dashv a-\alpha^{-1}\beta(a)\vdash\alpha\beta^{-1}(x))\nonumber\\
&=&(x\dashv a)\dashv\beta\alpha(b)-(\alpha^{-1}\beta(a)\vdash\alpha\beta^{-1}(x))\dashv\beta\alpha(b)\nonumber\\
&&-\beta^2(b)\vdash(\alpha\beta^{-1}(x)\dashv\beta^{-1}\alpha(a))
+\beta^2(b)\vdash(a\vdash\alpha^2\beta^{-2}(x))\nonumber.
\end{eqnarray}

On the other hand,
\begin{eqnarray}
&&\qquad [[x, \beta(b)], \alpha(a)]+[\alpha(x), [a, \alpha(b)]]=\nonumber\\
&&=\Big(x\dashv\beta(b)-\alpha^{-1}\beta^2(b)\vdash\alpha\beta^{-1}(x)\Big)\dashv\alpha(a)
-\alpha^{-1}\beta\alpha(a)\vdash\alpha\beta^{-1}\Big(x\dashv\beta(b)-\alpha^{-1}\beta^2(b)\vdash\alpha\beta^{-1}(x)\Big)\nonumber\\
&&\quad+\alpha(x)\dashv\Big(a\dashv \alpha(b)-\alpha^{-1}\beta\alpha(b)\vdash\alpha\beta^{-1}(a)\Big)
-\alpha^{-1}\beta\Big(a\dashv \alpha(b)-\alpha^{-1}\beta\alpha(b)\vdash\alpha\beta^{-1}(a)\Big)\vdash\alpha\beta^{-1}\alpha(x)\nonumber\\
&&=(x\dashv\beta(b))\dashv\alpha(a)-(\alpha^{-1}\beta^2(b)\vdash\alpha\beta^{-1}(x))\dashv\alpha(a)
-\beta(a)\vdash (\alpha\beta^{-1}(x)\dashv\alpha(b))\nonumber\\
 &&\quad+\beta(a)\vdash(\beta(b)\vdash\alpha^2\beta^{-2}(x))+\alpha(x)\dashv(a\dashv\alpha(b))-\alpha(x)\dashv(\beta(b)\vdash\alpha\beta^{-1}(a))\nonumber\\
&&\quad-(\alpha^{-1}\beta(a)\dashv\beta(b))\vdash\beta^{-1}\alpha^2(x)+(\alpha^{-1}\beta^2(b)\vdash a)\vdash\beta^{-1}\alpha^2(x)\nonumber.
\end{eqnarray}
Using axioms (\ref{eq5}), (\ref{eq7}), it comes
\begin{eqnarray}
&&\qquad [[x, \beta(b)], \alpha(a)]+[\alpha(x), [a, \alpha(b)]]=\nonumber\\
&&=-(\alpha^{-1}\beta^2(b)\vdash\alpha\beta^{-1}(x))\dashv\alpha(a)-\beta(a)\vdash (\alpha\beta^{-1}(x)\dashv\alpha(b))\nonumber\\
&&\quad+\alpha(x)\dashv(a\dashv\alpha(b)) +(\alpha^{-1}\beta^2(b)\vdash a)\vdash\alpha^2\beta^{-1}(x)\nonumber.
\end{eqnarray}
By comparing, we get the attended result. The five other axioms are proved in the same way.
\end{proof}

\begin{lemma}\label{lbsa}
Let $D$ and $L$ be two regular BiHom-associative dialgebras together with an action of $D$ on $L$. There is a BiHom-associative dialgebra structure on 
$L\rJoin D$ which consists with vector space $L\oplus D$ and 
 \begin{eqnarray}
 (a, x)\triangleleft (b, y)&=&(a\dashv b+a\dashv y+x\dashv b, x\dashv y)\nonumber,\\
(a, x)\triangleright (b, y)&=&(a\vdash b+a\vdash y+x\vdash b, x\vdash y)\nonumber,
 \end{eqnarray}
for any $(a, x), (b, y)\in L\times D$
\end{lemma}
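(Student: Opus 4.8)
The plan is to equip $L\oplus D$ with the diagonal structure maps $\alpha(a,x)=(\alpha(a),\alpha(x))$ and $\beta(a,x)=(\beta(a),\beta(x))$, where on each summand $\alpha,\beta$ denote the structure maps of $L$ and of $D$, and then to check the six conditions of Definition \ref{dia} directly. The relation $\alpha\circ\beta=\beta\circ\alpha$ holds componentwise because it holds in both $L$ and $D$, so the real work is to verify the five BiHom-associativity axioms (\ref{eq4})--(\ref{eq8}) on arbitrary triples $(a,x),(b,y),(c,z)\in L\times D$. Since each of $\triangleleft$ and $\triangleright$ is given by two decoupled formulas, namely a $D$-valued second coordinate together with an $L$-valued first coordinate, I would verify each axiom by comparing the two coordinates separately.

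First I would dispatch the $D$-coordinate. On second coordinates the products are simply $x\dashv y$ and $x\vdash y$, so projecting any of (\ref{eq4})--(\ref{eq8}) onto $D$ reproduces exactly the corresponding axiom of the regular BiHom-associative dialgebra $D$, which holds by hypothesis. Hence the second coordinate is automatic for all five axioms, and no action data intervene there.

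The substance is the $L$-coordinate. Expanding, for instance, the two sides of (\ref{eq4}), the first coordinate of $((a,x)\triangleleft(b,y))\triangleleft\beta(c,z)$ is a sum of seven terms, and the first coordinate of $\alpha(a,x)\triangleleft((b,y)\triangleleft(c,z))$ is likewise a sum of seven terms, paired off by identities such as
\[
\alpha(a)\dashv(b\dashv z)=(a\dashv b)\dashv\beta(z),\qquad \alpha(x)\dashv(y\dashv c)=(x\dashv y)\dashv\beta(c).
\]
Exactly one of the seven pairs is purely internal to $L$ (here $\alpha(a)\dashv(b\dashv c)=(a\dashv b)\dashv\beta(c)$) and is settled by axiom (\ref{eq4}) for $L$ itself; the remaining six are mixed terms, each carrying one or two entries from $D$, and each is settled by precisely one of the labelled equalities of Definition \ref{act1}. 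The bookkeeping is clean: running through all five axioms (\ref{eq4})--(\ref{eq8}) produces $5\times 6=30$ mixed identities, which is exactly the number of action equalities $(01)$--$(30)$, so each action equality is consumed once and the system closes with nothing left over.

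The main obstacle is thus organizational rather than conceptual: one must keep scrupulous track, slot by slot, of which of the three arguments lies in $D$ and which lies in $L$, in order to invoke the correct equality among $(01)$--$(30)$ (and the correct internal axiom of $L$). Once the seven-term expansions of both sides are written out and the slots are typed correctly, every term on the left cancels against its matched image on the right, so the $L$-coordinate vanishes for all five axioms. Combining this with the automatic verification of the $D$-coordinate and the componentwise commutation of $\alpha$ and $\beta$ shows that $(L\oplus D,\triangleleft,\triangleright,\alpha,\beta)$ is a BiHom-associative dialgebra, which proves the lemma.
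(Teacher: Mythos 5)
Your proposal is correct and follows essentially the same route as the paper: the second ($D$-)coordinate is the axiom for $D$ itself, and the first coordinate splits into one purely-$L$ pair handled by the corresponding axiom of $L$ plus six mixed pairs handled by the action identities (the paper's verification of one axiom cites exactly (\ref{eq5}) together with $(02),(07),(12),(17),(22),(27)$, matching your slot-by-slot bookkeeping). Your additional observation that the five axioms consume the thirty action equalities exactly once is a nice sanity check not made explicit in the paper, but the underlying argument is the same.
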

\begin{proof}
For any $a, b, c\in L, x, y, z\in D$, one has
 \begin{eqnarray}
&&\qquad  \Big((a, x)\triangleleft(b, y)\Big)\triangleleft\beta(c, z)-\alpha(a, x)\triangleleft\Big((b, y)\triangleright(c, z)\Big)=\nonumber\\
&&=(a\dashv b+a\dashv y+x\dashv b, x\dashv y)\triangleleft(\beta(c), \beta(z))-(\alpha(a), \alpha(x))\triangleleft
(b\vdash c+b\vdash z+y\vdash c, y\vdash z)\nonumber\\
&&=\Big((a\dashv b+a\dashv y+x\dashv b)\dashv\beta(c)+(a\dashv b+a\dashv y+x\dashv b)\dashv\beta(z)
+(x\dashv y)\dashv\beta(c),\;\; (x\dashv y)\dashv\beta(z)\Big)\nonumber\\
&&\quad-\Big(\alpha(a)\dashv(b\vdash c+b\vdash z+y\vdash c)+\alpha(a)\dashv(y\vdash z)
+\alpha(x)\dashv(b\vdash c+b\vdash z+y\vdash c),\;\; \alpha(x)\dashv(y\vdash z)\Big)\nonumber\\
&&=\Big((a\dashv b)\dashv\beta(c)+(a\dashv y)\dashv\beta(c)
+(x\dashv b)\dashv\beta(c)+(a\dashv b)\dashv\beta(z)+(a\dashv y)\dashv\beta(z)+(x\dashv b)\dashv\beta(z)\nonumber\\
&&\quad+(x\dashv y)\dashv\beta(c)-\alpha(a)\dashv(b\vdash c)-\alpha(a)\dashv(b\vdash z)-\alpha(a)\dashv(y\vdash c)-\alpha(a)\dashv(y\vdash z)
-\alpha(x)\dashv(b\vdash c)\nonumber\\
&&\quad-\alpha(x)\dashv(b\vdash z)-\alpha(x)\dashv(y\vdash c),\;\; (x\dashv y)\dashv\beta(z)-\alpha(x)\dashv(y\vdash z)\Big)\nonumber.
 \end{eqnarray}
The left hand side vanishes by axiom (\ref{eq5}) and axioms $(02), (07), (12), (17), (22), (27)$ in Definition \ref{act1}.
The other axioms are proved in the same way.
\end{proof}

\begin{theorem}
 Let $D$ and $L$ be two regular BiHom-associative dialgebras together with an action of $D$ on $L$.
Then, ${\bf Lb}(L\rJoin D)={\bf Lb}(L)\rJoin{\bf Lb}(D)$.
\end{theorem}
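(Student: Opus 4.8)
The plan is to prove the identity
$$
{\bf Lb}(L\rJoin D)={\bf Lb}(L)\rJoin{\bf Lb}(D)
$$
by showing that the two BiHom-Leibniz algebras have the same underlying vector space and the same bracket. Both sides are built on $L\oplus D$, so the only real content is matching the brackets. The left-hand side is obtained by first forming the semidirect BiHom-associative dialgebra $L\rJoin D$ of Lemma \ref{lbsa} (with products $\triangleleft,\triangleright$), and then applying the functor ${\bf Lb}$ of Theorem \ref{dilei}, which turns a regular BiHom-associative dialgebra into a BiHom-Leibniz algebra via $[u,v]=u\triangleleft v-\alpha^{-1}\beta(v)\triangleright\alpha\beta^{-1}(u)$. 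The right-hand side is the semidirect product of BiHom-Leibniz algebras from the Lemma preceding Theorem \ref{dilei}, built from the BiHom-Leibniz algebras ${\bf Lb}(L)$, ${\bf Lb}(D)$ and the action of ${\bf Lb}(D)$ on ${\bf Lb}(L)$ furnished by Lemma \ref{lbs}.

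First I would fix notation: write an element of $L\oplus D$ as a pair $(a,x)$ with $a\in L$, $x\in D$, and recall that the twisting maps on both sides coincide, namely $\alpha=\alpha_L\oplus\alpha_D$ and $\beta=\beta_L\oplus\beta_D$ (this is immediate since ${\bf Lb}$ keeps the structure maps and the semidirect product uses the componentwise maps). Then I would compute the ${\bf Lb}$-bracket of the left-hand side directly from the definition, substituting the explicit products $\triangleleft,\triangleright$ of Lemma \ref{lbsa}:
\begin{eqnarray}
[(a,x),(b,y)]&=&(a,x)\triangleleft(b,y)-\alpha^{-1}\beta(b,y)\triangleright\alpha\beta^{-1}(a,x)\nonumber\\
&=&(a\dashv b+a\dashv y+x\dashv b,\;x\dashv y)\nonumber\\
&&-\big(\alpha^{-1}\beta(b)\vdash\alpha\beta^{-1}(a)+\alpha^{-1}\beta(b)\vdash\alpha\beta^{-1}(x)+\alpha^{-1}\beta(y)\vdash\alpha\beta^{-1}(a),\;\alpha^{-1}\beta(y)\vdash\alpha\beta^{-1}(x)\big).\nonumber
\end{eqnarray}
Grouping the second component gives exactly the bracket of ${\bf Lb}(D)$ applied to $x,y$, namely $x\dashv y-\alpha^{-1}\beta(y)\vdash\alpha\beta^{-1}(x)$. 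For the first component I would reorganize the terms so as to isolate the three pieces that match the definition of the semidirect Leibniz bracket on the right-hand side: the pure $L$-part $a\dashv b-\alpha^{-1}\beta(b)\vdash\alpha\beta^{-1}(a)$ (the bracket of ${\bf Lb}(L)$), the mixed part $x\dashv b-\alpha^{-1}\beta(b)\vdash\alpha\beta^{-1}(x)$ (the bracket $[x,b]$ of the action in Lemma \ref{lbs}), and the mixed part $a\dashv y-\alpha^{-1}\beta(y)\vdash\alpha\beta^{-1}(a)$ (the bracket $[a,y]$ of that action).

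Comparing with the semidirect bracket
$$
[(x,a),(y,b)]=([x,y]+[x,b]+[a,y],\,[a,b])
$$
from the Lemma preceding Theorem \ref{dilei} (rewritten with our ordering of the $L$- and $D$-components and with the action brackets of Lemma \ref{lbs}), I expect the two first components to coincide termwise, completing the verification. The main obstacle is purely bookkeeping: one must be careful that the left/right products $\dashv,\vdash$ on $L\rJoin D$ mix the $L$- and $D$-slots correctly, and that the twist $\alpha^{-1}\beta,\alpha\beta^{-1}$ is distributed over each of the three summands exactly as in the defining formulas of Lemma \ref{lbs}; since both $\alpha$ and $\beta$ act diagonally on $L\oplus D$ and commute, these twists pass through the direct sum without interaction between the slots, so no genuine difficulty arises beyond matching the six types of terms. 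I would therefore state that the equality of brackets (and of structure maps) is obtained by this direct expansion, which shows the two BiHom-Leibniz algebras are identical, and conclude the proof.
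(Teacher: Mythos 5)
Your proposal is correct and follows essentially the same route as the paper's own proof: both sides share the underlying space and the structure maps, so one expands the two brackets using Lemma \ref{lbsa} and Lemma \ref{lbs} and matches the resulting terms componentwise. You are in fact more careful than the paper, whose computation silently drops the twists $\alpha^{-1}\beta$ and $\alpha\beta^{-1}$; keeping them, as you do, is what makes the comparison consistent with the bracket of Theorem \ref{dilei}.
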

\begin{proof}
By lemma $\ref{lbs}$, ${\bf Lb}(D)$ acts on ${\bf Lb}(L)$, so it makes sense to consider the semidirect product Leibniz algebra 
${\bf Lb}(L)\rJoin{\bf Lb}(D)$. It is clear that ${\bf Lb}(L\rJoin D)$ and ${\bf Lb}(L)\rJoin{\bf Lb}(D)$ are egal as vector space, so we only need 
to verify that they share the same bracket. Let $(a, x), (b, y)\in L\times D$. If we use the bracket in ${\bf Lb}(L)\rJoin{\bf Lb}(D)$, we get :
\begin{eqnarray}
 [(a, x), (b, y)]
&=&([a, b]+[x, b]+[a, y], [x, y])\nonumber\\
&=&(a\dashv y-b\vdash+x\dashv b-b\vdash x+a\dashv y-y\vdash a, x\dashv y-y\vdash x).\nonumber
\end{eqnarray}
On the other hand, if we use the Leibniz bracket in ${\bf Lb}(L\rJoin D)$ (Lemma \ref{lbsa}), we get
 \begin{eqnarray}
 \{(a, x), (b, y)\}
&=&(a, x)\triangleleft(b, y)-(b, y)\triangleright (a, x)\nonumber\\
&=&(a\dashv b+x\dashv b+a\dashv y, x\dashv y)-(b\vdash a+y\vdash a+b\vdash x, y\vdash x)\nonumber,
 \end{eqnarray}
So the brackets are equal.
\end{proof}

\section{Central extensions}
This section concerns the central extension of BiHom-associative dialgebras in relation with cocycles.
\begin{definition}
 Let $(D_i, \dashv_i, \vdash_i, \alpha_i, \beta_i), i=1,2,3$ be three BiHom-associative dialgebras. The BiHom-associative dialgebra $D_2$ is called
 the extension of $D_3$ by $D_1$ if there are homomorphisms $\phi : D_1\rightarrow D_2$ and $\psi : D_2\rightarrow D_3$ such that the following sequence
$$0\rightarrow D_1\stackrel{\phi}{\longrightarrow}D_2\stackrel{\psi}{\rightarrow}D_3\rightarrow0$$
is exact. 
\end{definition}

\begin{definition}
 An extension is called trivial if there exists a BiHom-ideal $I$ of $D_2$ complementary to $Ker\psi$ i.e.
$$D_2=Ker\psi\oplus I$$
\end{definition}

It may happen that there exist several extensions of $D_3$ by $D_1$. To classify extensions the notion of equivalent extensions is defined.

\begin{definition}
 Two sequences 
$$0\rightarrow D_1\stackrel{\phi}{\longrightarrow}D_2\stackrel{\psi}{\rightarrow}D_3\rightarrow0$$
and 
$$0\rightarrow D_1\stackrel{\phi'}{\longrightarrow}D_2\stackrel{\psi'}{\rightarrow}D_3\rightarrow0$$
are equivalent extensions if there exists a associative dialgebra isomorphism $f : D_2\rightarrow D'_2$ such that
$$f\circ\phi=\phi'\quad\mbox{and}\quad \psi'\circ f=\psi.$$
\end{definition}

\begin{definition}
 An extension
$$0\rightarrow D_1\stackrel{\phi}{\longrightarrow}D_2\stackrel{\psi}{\rightarrow}D_3\rightarrow0$$
is called central if the kernel of $\psi$ is contained in the center $Z(D_2)$ of $D_2$, i.e. $Ker\psi\subset Z(D)$.
\end{definition}

Now, we introduce $2$-cocycle on BiHom-associative dialgebra with values in a BiHom-module.
\begin{definition}
 Let $(D, \dashv, \vdash, \alpha, \beta)$ be a BiHom-associative dialgebra and $(M, \alpha_M, \beta_M)$ a BiHom-module over the same field that $D$. A pair 
$\Theta=(\theta_1, \theta_2)$ of bilinear maps $\theta_1 : D\times D\rightarrow V$ and $\theta_2 : D\times D\rightarrow V$ is called a $2$-cocycle on
$D$ with values in $V$ if $\theta_1$ and $\theta_2$ satisfy 
\begin{eqnarray}
 \theta_1(x\dashv y, \beta(z))&=&\theta_1(\alpha(x), y\dashv z), \label{cc1}\\
\theta_1(x\dashv y, \beta(z))&=&\theta_1(\alpha(x), y\vdash z), \\
\theta_2(x\vdash y, \beta(z))&=&\theta_2(\alpha(x), y\vdash z), \\
\theta_2(x\dashv y, \beta(z)) &=& \theta_2(\alpha(x), y\vdash z), \\
\theta_1(x\vdash y, \beta(z))&=&\theta_2(\alpha(x), y\dashv z),
\end{eqnarray}
 for all $x, y, z\in D$.
\end{definition}
The set of all $2$-cocycles on $D$ with values in $M$ is denoted $Z^2(D, M)$, which a vector space. \\
In the below lemma, we give a special type of $2$-cocycles which are called $2$-coboundaries.
\begin{lemma}
 Let $\nu : D\rightarrow V$ be a linear map, and define $\varphi_1(x, y)=\nu(x\dashv y)$ and $\varphi_2(x, y)=\nu(x\vdash y)$. Then,
$\Phi=(\varphi_1, \varphi_2)$ is a $2$-cocycle on $D$.
\end{lemma}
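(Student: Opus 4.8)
The plan is to verify the five defining identities of a $2$-cocycle directly, observing that each one is nothing but the image under the linear map $\nu$ of one of the five BiHom-associativity axioms (\ref{eq4})--(\ref{eq8}). Since $\varphi_1(x,y)=\nu(x\dashv y)$ and $\varphi_2(x,y)=\nu(x\vdash y)$, evaluating a cocycle expression such as $\varphi_1(x\dashv y,\beta(z))$ simply produces $\nu\bigl((x\dashv y)\dashv\beta(z)\bigr)$, while the corresponding right-hand side produces $\nu$ applied to the other side of the relevant dialgebra axiom. Linearity of $\nu$ then closes the gap, so nothing beyond the structural identities of $D$ is needed.

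Concretely, I would treat the five conditions in turn. First, $\varphi_1(x\dashv y,\beta(z))=\nu\bigl((x\dashv y)\dashv\beta(z)\bigr)$ and $\varphi_1(\alpha(x),y\dashv z)=\nu\bigl(\alpha(x)\dashv(y\dashv z)\bigr)$ coincide by (\ref{eq4}); replacing $y\dashv z$ by $y\vdash z$ on the right and invoking (\ref{eq5}) gives the second condition. For the conditions whose outer operation is $\vdash$, i.e.\ those involving $\varphi_2$: $\varphi_2(x\vdash y,\beta(z))=\nu\bigl((x\vdash y)\vdash\beta(z)\bigr)$ matches $\varphi_2(\alpha(x),y\vdash z)=\nu\bigl(\alpha(x)\vdash(y\vdash z)\bigr)$ by (\ref{eq8}), and $\varphi_2(x\dashv y,\beta(z))=\nu\bigl((x\dashv y)\vdash\beta(z)\bigr)$ matches $\varphi_2(\alpha(x),y\vdash z)$ by (\ref{eq7}). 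Finally the mixed condition $\varphi_1(x\vdash y,\beta(z))=\nu\bigl((x\vdash y)\dashv\beta(z)\bigr)$ equals $\varphi_2(\alpha(x),y\dashv z)=\nu\bigl(\alpha(x)\vdash(y\dashv z)\bigr)$ by (\ref{eq6}).

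The computation is entirely routine, so there is no genuine obstacle; the only point requiring attention is the bookkeeping. One must match each of the five cocycle relations to the correct axiom and keep straight that $\varphi_1$ carries $\dashv$ as its outer operation while $\varphi_2$ carries $\vdash$. Because the five cocycle identities are precisely the $\nu$-images of (\ref{eq4}), (\ref{eq5}), (\ref{eq8}), (\ref{eq7}), (\ref{eq6}), in that order, once the correspondence is laid out the verification is immediate and $\Phi=(\varphi_1,\varphi_2)$ is a $2$-cocycle.
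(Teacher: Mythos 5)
Your proof is correct and takes essentially the same approach as the paper: each cocycle identity is the image under $\nu$ of the corresponding BiHom-associativity axiom (\ref{eq4})--(\ref{eq8}). The paper's proof verifies only one chain of equalities (combining (\ref{eq4}) and (\ref{eq5})) and leaves the rest to the reader, whereas you spell out the full correspondence, which matches correctly in every case.
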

\begin{proof}
We will prove one equality, the others being proved in the same way. For any $x, y, z\in D$, one has
 \begin{eqnarray}
  \varphi_1(\alpha(x), y\dashv z)&=&\nu(\alpha(x)\dashv(y\dashv z))=\nu((x\dashv y)\dashv\beta(z))\nonumber\\
&=&\nu(\alpha(x)\dashv(y\vdash z))=\varphi_1(\alpha(x), y\vdash z).\nonumber
 \end{eqnarray}
This finishes the proof.
\end{proof}
The set of all $2$-coboundaries is denoted by $B^2(D, M)$ and it is a subgroup of $Z^2(D, M)$.
The group $H^2(D, M)=Z^2(D, M)/B^2(D, M)$ is said to be a second cohomology group of $D$ with values in $M$.
Two cocycles $\Theta_1$ and $\Theta_2$ are said to be cohomologous cocycles if $\Theta_1-\Theta_2$ is a coboundary.
\begin{theorem}
 Let $(D, \dashv, \vdash, \alpha_D, \beta_D)$ be a BiHom-associative dialgebra, $(M, \alpha_M, \beta_M)$ a BiHom-module,
$$\theta_1 : D\times D\rightarrow M\quad\mbox{and}\quad \theta_2 : D\times D\rightarrow M$$
be bilinear maps. Let us set $D_\Theta=D\oplus M$, where $\Theta=(\theta_1, \theta_2)$. For any $x, y\in D$, $v, w\in M$, let us define
\begin{eqnarray}
 (x+u)\triangleleft(y+v)=x\dashv y+\theta_1(x, y)\quad\mbox{and}\quad (x+u)\triangleright(y+v)=x\vdash y+\theta_2(x, y).\nonumber
\end{eqnarray}
 Then, $(D_\Theta, \triangleleft, \triangleright, \alpha_A\otimes \alpha_M, \beta_A\otimes \beta_M)$ is a BiHom-associative dialgebra if and only 
if $\Theta$ is a $2$-cocycle.
\end{theorem}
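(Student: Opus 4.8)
The plan is to verify the five defining axioms (\ref{eq4})--(\ref{eq8}) of a BiHom-associative dialgebra directly on $D_\Theta=D\oplus M$, exploiting that every element decomposes uniquely as $x+u$ with $x\in D$, $u\in M$, and that both products $\triangleleft,\triangleright$ reproduce $\dashv,\vdash$ on the $D$-component while depositing a correction term $\theta_1$ or $\theta_2$ into the (trivial) $M$-component. Since $M$ carries no action, neither product feeds the $M$-part back into a later multiplication, so in each axiom both sides split cleanly into a $D$-slot and an $M$-slot. First I would record that the commutativity $\alpha\circ\beta=\beta\circ\alpha$ on $D_\Theta$ is immediate, holding separately on $D$ and on $M$.

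Then I would expand each axiom in turn. For axiom (\ref{eq4}), computing $((x+u)\triangleleft(y+v))\triangleleft\beta(z+w)$ gives $(x\dashv y)\dashv\beta(z)$ in the $D$-slot and $\theta_1(x\dashv y,\beta(z))$ in the $M$-slot, while $\alpha(x+u)\triangleleft((y+v)\triangleleft(z+w))$ produces $\alpha(x)\dashv(y\dashv z)$ together with $\theta_1(\alpha(x),y\dashv z)$. The $D$-parts coincide because $D$ already satisfies (\ref{eq4}), so the axiom for $D_\Theta$ reduces exactly to the $2$-cocycle relation (\ref{cc1}). Running the identical computation through the remaining four axioms, I expect (\ref{eq5}) to yield $\theta_1(x\dashv y,\beta(z))=\theta_1(\alpha(x),y\vdash z)$, axiom (\ref{eq6}) to yield $\theta_1(x\vdash y,\beta(z))=\theta_2(\alpha(x),y\dashv z)$, axiom (\ref{eq7}) to yield $\theta_2(x\dashv y,\beta(z))=\theta_2(\alpha(x),y\vdash z)$, and axiom (\ref{eq8}) to yield $\theta_2(x\vdash y,\beta(z))=\theta_2(\alpha(x),y\vdash z)$; in every case the $D$-part vanishes by the corresponding axiom on $D$ and only the $M$-part survives.

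Because $D\oplus M$ is a direct sum, an equation in $D_\Theta$ holds if and only if its $D$-part and its $M$-part hold separately. The $D$-parts are automatic, so each dialgebra axiom on $D_\Theta$ is \emph{equivalent} to precisely one of the five $2$-cocycle conditions. This gives both directions of the ``if and only if'' at once: assembling the five equivalences shows that $D_\Theta$ is a BiHom-associative dialgebra exactly when $\Theta=(\theta_1,\theta_2)$ is a $2$-cocycle.

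The computation is entirely routine; the only point demanding care is the bookkeeping of \emph{which} product ($\triangleleft$ or $\triangleright$) occupies each slot of each axiom, so that the correct $\theta_i$ is generated on each side and matched against the intended cocycle identity. There is no analytic or structural obstacle here—just the need to keep the five pairs of expansions aligned with their target relations.
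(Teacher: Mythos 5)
Your proposal is correct and follows essentially the same route as the paper: expand each of the five dialgebra axioms on $D\oplus M$, observe that the $D$-component vanishes by the corresponding axiom on $D$ and that the $M$-component is exactly one of the five $2$-cocycle identities, and conclude the equivalence from the direct-sum decomposition. Your matching of axioms (\ref{eq4})--(\ref{eq8}) to the five cocycle conditions is exactly right; the paper merely writes out the case of (\ref{eq4}) against (\ref{cc1}) and declares the rest analogous.
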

\begin{proof}
 For any $x, y, z\in D, u, v, w\in M$, we have
\begin{eqnarray}
 && ((x+v)\triangleleft(y+w))\triangleleft(\beta(z)+w)-(\alpha(x)+v)\triangleleft((y+w)\triangleleft(z+w))=\nonumber\\
&=&((x+v)\triangleleft(y+w))\triangleleft(\beta(z)+w)-(\alpha(x)+ v)\triangleleft((y\dashv z)+\theta_1(y, z))\nonumber\\
&=&((x\dashv y)\dashv \beta(z))+\theta_1(x\dashv y, \beta(z))-(\alpha(x)\dashv (y\dashv z))-\theta_1(\alpha(x), y\dashv z).\nonumber
\end{eqnarray}
The left hand vanishes by axioms (\ref{eq4}) and (\ref{cc1}). The other axioms are proved analagously.
\end{proof}

\begin{lemma}
Let $\Theta$ be a $2$-cocycle and $\Phi$ a $2$-coboundary. Then, $D_{\Theta+\Phi}$ is a BiHom-associative dialgebra with
\begin{eqnarray}
 (x+u)\unlhd(y+v)=x\dashv y+\varphi_1(x, y)+\theta_1(x, y),\nonumber\\ 
(x+u)\unrhd(y+v)=x\vdash y+\varphi_2(x, y)+\theta_2(x, y)\nonumber.
\end{eqnarray}
Moreover, $D_\Theta\cong D_{\Theta+\Phi}$.
\end{lemma}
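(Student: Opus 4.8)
The plan is to reduce the first assertion to the theorem immediately preceding this lemma and to prove the second by an explicit change of variables. First I would observe that, by the lemma characterising $2$-coboundaries, the pair $\Phi=(\varphi_1,\varphi_2)$ is itself a $2$-cocycle, hence lies in $Z^2(D,M)$. Since $Z^2(D,M)$ is a vector space, the sum $\Theta+\Phi=(\theta_1+\varphi_1,\theta_2+\varphi_2)$ again belongs to $Z^2(D,M)$. The preceding theorem then guarantees that $(D_{\Theta+\Phi},\unlhd,\unrhd,\alpha_D\otimes\alpha_M,\beta_D\otimes\beta_M)$ is a BiHom-associative dialgebra, with products exactly those displayed in the statement. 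This settles the first claim with no further computation.

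For the isomorphism $D_\Theta\cong D_{\Theta+\Phi}$, let $\nu:D\rightarrow M$ be the linear map producing the coboundary, so that $\varphi_1(x,y)=\nu(x\dashv y)$ and $\varphi_2(x,y)=\nu(x\vdash y)$. I would define
$$f:D_\Theta\rightarrow D_{\Theta+\Phi},\qquad f(x+u)=x+u+\nu(x),$$
and check that it is a morphism. For the $\dashv$-type product one computes
\begin{eqnarray}
f\big((x+u)\triangleleft(y+v)\big)&=&f\big(x\dashv y+\theta_1(x,y)\big)=x\dashv y+\theta_1(x,y)+\nu(x\dashv y)\nonumber\\
&=&x\dashv y+\theta_1(x,y)+\varphi_1(x,y)=f(x+u)\unlhd f(y+v),\nonumber
\end{eqnarray}
and the identical argument with $\vdash,\triangleright,\unrhd,\theta_2,\varphi_2$ handles the other product. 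Bijectivity is immediate, since $x+u\mapsto x+u-\nu(x)$ is a two-sided inverse.

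The step demanding care is the compatibility of $f$ with the structure maps: one needs $f\circ(\alpha_D\otimes\alpha_M)=(\alpha_D\otimes\alpha_M)\circ f$ and the analogue for $\beta$. Unwinding the definition of $f$, this reduces precisely to $\nu\circ\alpha_D=\alpha_M\circ\nu$ and $\nu\circ\beta_D=\beta_M\circ\nu$, i.e. to $\nu$ being compatible with the structure maps. I expect this to be the main (and essentially the only) obstacle, as it is the requirement that $\nu$ be a morphism of BiHom-modules, which must be read into the definition of a $2$-coboundary. Granting this compatibility, $f$ is a bijective multiplicative map commuting with $\alpha_D\otimes\alpha_M$ and $\beta_D\otimes\beta_M$, hence an isomorphism of BiHom-associative dialgebras, which completes the proof.
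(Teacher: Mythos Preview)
Your argument is correct and, for the isomorphism part, essentially identical to the paper's: the same map $f(x+u)=x+\nu(x)+u$ is used, the multiplicativity check is the same computation, and the paper likewise passes from $\nu\alpha_D(x)$ to $\alpha_M\nu(x)$ without comment, so the compatibility of $\nu$ with the structure maps is an implicit hypothesis in both treatments. You are right to flag it.

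The one genuine difference is in the first assertion. The paper verifies one of the BiHom-dialgebra axioms for $\unlhd$ by direct expansion, invoking (\ref{eq4}) and the cocycle identity (\ref{cc1}), and leaves the rest to the reader. Your route is cleaner: you observe that $\Phi\in Z^2(D,M)$ by the coboundary lemma, that $Z^2(D,M)$ is a vector space so $\Theta+\Phi\in Z^2(D,M)$, and then invoke the preceding theorem to conclude immediately that $D_{\Theta+\Phi}$ is a BiHom-associative dialgebra. This avoids repeating a computation already encapsulated in that theorem, at the cost of relying on one more cross-reference.
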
 
\begin{proof}
First, we have to shown that $D_{\Theta+\Phi}$ is a BiHom-associative dialgebra. So, for any $x+u,\\ y+v, z+w\in D\oplus M$,
\begin{eqnarray}
 &&\qquad((x+u)\unlhd(y+v))\unlhd\beta(z+w)-\alpha(x+u)\unlhd((y+v))\unlhd(z+w))=\nonumber\\
&&=(x\dashv y+\varphi_1(x, y)+\theta_1(x, y))\unlhd(\beta(z)+\beta(w))
-(\alpha(x)+\alpha(u))\unlhd(y\dashv z+\varphi_1(y, z)+\theta_1(y, z))\nonumber\\
&&=(x\dashv y)\dashv\beta(z)+\varphi_1(x\dashv y,\beta(z))+\theta_1(x\dashv y,\beta(z))
-\alpha(x)\dashv y\dashv z-\varphi_1(\alpha(x), y\dashv z)+\theta_1(\alpha(x), y\dashv z)\nonumber
\end{eqnarray}
The left hand side vanishes by (\ref{eq4}) and (\ref{cc1}). The proofs of the rest of axioms are leaved to the reader.\\
Next, the isomorphism $f : D_\Theta\rightarrow D_{\Theta+\Phi}$ is given by 
$f(x+v)=x+\nu(x)+v$. In fact, it is clear that $f$ is a bijective linear map and
 \begin{eqnarray}
  f(\alpha_D+\alpha_M)(x+v)&=&f(\alpha_D(x)+\alpha_M(v))\nonumber\\
&=&\alpha_D(x)+\nu\alpha_D(x)+\alpha_M(v)\nonumber\\
&=&\alpha_D(x)+\alpha_M\nu(x)+\alpha_M(v)\nonumber\\
&=&(\alpha_D+\alpha_M)(x+\nu(x)+v)\nonumber\\
&=&(\alpha_D+\alpha_M)\circ f(x+v).\nonumber
 \end{eqnarray}
Thus, $f$ commutes $\alpha_D+\alpha_M$, and similarly with $\beta_D+\beta_M$.\\
Then, 
\begin{eqnarray}
 f((x+v)\triangleleft (y+w))
&=&f(x\dashv y+\theta_1(x, y))\nonumber\\
&=&f(x\dashv y)+f(\theta_1(x, y))\nonumber\\
&=&x\dashv y+\nu(x\dashv y)+\theta_1(x, y)\nonumber\\
&=&x\dashv y+\varphi_1(x, y)+\theta_1(x, y)\nonumber.
\end{eqnarray}
and
\begin{eqnarray}
 f(x+v)\unlhd f(y+w)
&=&(x+\nu(x)+v)\unlhd(y+\nu(y)+w)\nonumber\\
&=&(x\dashv y)+\varphi_1(x, y)+\theta_1(x, y).\nonumber
\end{eqnarray}
\end{proof}
\begin{corollary}
 Let $\Theta_1, \Theta_2$ be two cohomologous $2$-cocycles on a BiHom-associative dialgebra $D$, and $D_1, D_2$ be the central extensions constructed
with these $2$-cocycles, respectively. The the central extensions $D_1$ and $D_2$ are equivalent extensions. In particular a central extension
defined by a coboundary is equivalent with a trivial central extension.
\end{corollary}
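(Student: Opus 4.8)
The plan is to deduce the corollary directly from the preceding Lemma, which already produces an isomorphism of BiHom-associative dialgebras between $D_\Theta$ and $D_{\Theta+\Phi}$ whenever $\Theta$ is a $2$-cocycle and $\Phi$ a $2$-coboundary; the only genuinely new work is to check that this isomorphism intertwines the inclusion and projection maps of the two extensions.

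First I would unpack the hypothesis that $\Theta_1$ and $\Theta_2$ are cohomologous. By definition this means $\Phi := \Theta_1 - \Theta_2$ is a $2$-coboundary, so there is a linear map $\nu : D\rightarrow M$ with $\varphi_1(x,y)=\nu(x\dashv y)$ and $\varphi_2(x,y)=\nu(x\vdash y)$, where $\Phi=(\varphi_1,\varphi_2)$. Writing $\Theta_1=\Theta_2+\Phi$, the central extensions are $D_1=D_{\Theta_1}=D_{\Theta_2+\Phi}$ and $D_2=D_{\Theta_2}$, both carried on $D\oplus M$ and fitting into the exact sequences
\[
0\rightarrow M\stackrel{\phi_i}{\longrightarrow}D_i\stackrel{\psi_i}{\longrightarrow}D\rightarrow0,\qquad \phi_i(v)=0+v,\quad \psi_i(x+v)=x .
\]

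Next I would invoke the Lemma with $\Theta=\Theta_2$, which yields the isomorphism of BiHom-associative dialgebras
\[
f:D_2=D_{\Theta_2}\longrightarrow D_{\Theta_2+\Phi}=D_1,\qquad f(x+v)=x+\nu(x)+v .
\]
So $D_1$ and $D_2$ are already isomorphic, and it remains only to confirm compatibility with the structure maps of the extensions. For the inclusion, $f(\phi_2(v))=f(0+v)=0+\nu(0)+v=0+v=\phi_1(v)$, using $\nu(0)=0$, hence $f\circ\phi_2=\phi_1$. For the projection, the $D$-component of $f(x+v)=x+\nu(x)+v$ is $x$, so $\psi_1(f(x+v))=x=\psi_2(x+v)$, i.e. $\psi_1\circ f=\psi_2$. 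Since equivalence of extensions is symmetric (the inverse of such an $f$ is again compatible with the maps, in the direction matching the definition), this shows $D_1$ and $D_2$ are equivalent extensions.

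Finally, for the last assertion I would specialize to $\Theta_1=\Phi$ a coboundary and $\Theta_2=0$ the zero cocycle; these are cohomologous because $\Phi-0=\Phi$, so the first part gives that $D_\Phi$ is equivalent to $D_0$. A direct inspection shows $D_0=D\oplus M$ has products $(x+u)\triangleleft(y+v)=x\dashv y$ and $(x+u)\triangleright(y+v)=x\vdash y$, for which the embedded copy of $D$ is a two-sided BiHom-ideal complementary to $\mathrm{Ker}\,\psi_0=M$; hence $D_0$ is the trivial central extension, and $D_\Phi$ is equivalent to it. I expect no serious obstacle here: the heavy computation (that $f$ respects $\triangleleft,\triangleright,\alpha,\beta$) is already dispatched by the Lemma, so the only substantive point is the routine verification that $f$ is a morphism of the two short exact sequences, together with identifying $D_0$ as the split extension.
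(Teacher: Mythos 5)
Your argument is correct and follows exactly the route the paper intends: the corollary is stated immediately after the lemma giving the isomorphism $f(x+v)=x+\nu(x)+v$ between $D_\Theta$ and $D_{\Theta+\Phi}$, and the paper offers no further proof, treating it as an immediate consequence. You supply precisely the routine details left implicit there (compatibility of $f$ with the inclusion and projection maps, and the identification of $D_0$ as the trivial extension), so nothing further is needed.
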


The following theorem is proved Mutatis Mutandis as (\cite{ISR}, Theorem 4.1). So we omitted the proof.

\begin{theorem}
 There exists one to one correspondence between elements of $H^2(D, M)$ and nonequivalents central extensions of associative dialgebra $D$ by $M$.
\end{theorem}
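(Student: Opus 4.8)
The plan is to construct the bijection explicitly and verify it is well-defined in both directions, mirroring the standard argument for associative algebras referenced in \cite{ISR}. First I would associate to each cohomology class $[\Theta]\in H^2(D,M)$ the equivalence class of the central extension $D_\Theta=D\oplus M$ built in the theorem above, where the products $\triangleleft,\triangleright$ are twisted by $\theta_1,\theta_2$ and the structure maps are $\alpha_D\oplus\alpha_M$, $\beta_D\oplus\beta_M$. The earlier theorem guarantees $D_\Theta$ is a genuine BiHom-associative dialgebra precisely because $\Theta$ is a $2$-cocycle, and since $M$ sits inside the center and equals $\mathrm{Ker}\,\psi$ for the projection $\psi:D_\Theta\to D$, the sequence $0\to M\to D_\Theta\to D\to 0$ is a central extension. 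The corollary preceding the statement already shows this assignment is independent of the representative: if $\Theta_1-\Theta_2=\Phi$ is a coboundary, then $D_{\Theta_1}\cong D_{\Theta_2}$ as equivalent extensions. So the map $[\Theta]\mapsto[D_\Theta]$ is well-defined on cohomology classes.

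Next I would build the inverse. Given an arbitrary central extension $0\to M\xrightarrow{\phi} E\xrightarrow{\psi} D\to 0$, I would choose a linear section $s:D\to E$ with $\psi\circ s=\mathrm{id}_D$, which exists because we work over a field. Identifying $E$ with $D\oplus M$ via $s$ and $\phi$, the two products on $E$ must then take the form $s(x)\dashv_E s(y)=s(x\dashv y)+\theta_1(x,y)$ and $s(x)\vdash_E s(y)=s(x\vdash y)+\theta_2(x,y)$ for some bilinear maps $\theta_1,\theta_2:D\times D\to M$, since $\psi$ is a homomorphism and $\mathrm{Ker}\,\psi=\phi(M)$. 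The centrality of the extension forces $M$ to be a trivial BiHom-module, so that the five BiHom-diassociativity axioms for $E$ reduce, after projecting onto the $M$-component, exactly to the five $2$-cocycle conditions defining $Z^2(D,M)$. This shows $\Theta=(\theta_1,\theta_2)$ is a $2$-cocycle, giving a map back from extensions to $Z^2(D,M)$.

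I would then check that this inverse descends to cohomology and is mutually inverse to the first map. The key point is that a different choice of section $s'=s+\eta$, with $\eta:D\to M$ linear, changes the cocycle by $\Phi=(\varphi_1,\varphi_2)$ where $\varphi_1(x,y)=\eta(x\dashv y)$ and $\varphi_2(x,y)=\eta(x\vdash y)$ up to the module action; since $M$ is central the module terms vanish and $\Phi$ is precisely a $2$-coboundary in $B^2(D,M)$. Hence the class $[\Theta]\in H^2(D,M)$ is independent of the section, and equivalent extensions produce cohomologous cocycles, so the assignment passes to equivalence classes. Composing the two maps and tracing through the identifications shows they are inverse, establishing the claimed one-to-one correspondence; the final sentence about coboundaries corresponding to the trivial extension is then immediate from the preceding corollary.

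The main obstacle I anticipate is the bookkeeping in translating the five diassociativity axioms \eqref{eq4}--\eqref{eq8} into the five cocycle equations and back, keeping the two maps $\theta_1,\theta_2$ straight and confirming that centrality genuinely kills every module-action term so that no extra coupling conditions survive. A secondary subtlety is verifying that the structure maps on $E$ are forced to split as $\alpha_D\oplus\alpha_M$ and $\beta_D\oplus\beta_M$ compatibly with the chosen section, which relies on $\phi,\psi$ intertwining the respective $\alpha$'s and $\beta$'s; this is where the multiplicativity hypotheses quietly enter. Since the cited reference handles the associative case by an identical scheme, the theorem follows mutatis mutandis, and it is reasonable to omit the routine verification as the authors do.
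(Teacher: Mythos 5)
The paper gives no proof of this theorem at all---it is declared to follow \emph{mutatis mutandis} from (\cite{ISR}, Theorem 4.1)---and your outline is precisely the standard section-and-cocycle argument that that reference carries out, correctly adapted to the two products, the five diassociativity axioms, and the five corresponding cocycle identities of the BiHom setting, with the forward map given by $[\Theta]\mapsto[D_\Theta]$ and the inverse obtained from a linear section whose ambiguity is exactly a coboundary. Your proposal is therefore correct and takes essentially the same (intended) approach as the paper; the one point you rightly flag as delicate, namely that a section compatible with the structure maps $\alpha$ and $\beta$ must be available so that $\alpha_E$ and $\beta_E$ split as $\alpha_D\oplus\alpha_M$ and $\beta_D\oplus\beta_M$, is the only genuinely BiHom-specific issue and is likewise passed over in silence by the paper.
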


% $L\rtimes D$   $L\rJoin D \circlearrowleft$

% \newpage

\section{Classification }
In this section, we give classification of BiHom-associative dialgebras in low dimension.
% In what follows, we express by mean of vector basis the conditions defining a finite dimensional BiHom-associative dialgebra $(D, \dashv, \vdash
% , \alpha, \beta)$ a
% Let $\{e_i\}, i=1,\dots, n$ be basis of $D$. For any $i, j\in \mathbb{N}, 1\leq i, j\leq n$, let us put 
% $$e_i\dashv e_j=\sum_{k=1}^{n}\gamma_{ij}^ke_k,\quad e_i\vdash e_j\sum_{k=1}^{n}\delta_{ij}^ke_k,\quad\alpha(e_j)=\sum_{k=1}^{n}\alpha_{kj}e_k,\quad
% \beta(e_j)=\sum_{k=1}^{n}\beta_{kj}e_k.$$

% Then we have the below lemma : 
% \begin{lemma}
Let $(D, \dashv, \vdash, \alpha, \beta)$ be an $n$-dimensional BiHom-associative dialgebra, $\{e_i\}$ be a basis of $D$. For any $i, j\in \mathbb{N}, 1\leq i, j\leq n$, let us put 
$$e_i\dashv e_j=\sum_{k=1}^{n}\gamma_{ij}^ke_k,\quad e_i\vdash e_j\sum_{k=1}^{n}\delta_{ij}^ke_k,\quad\alpha(e_j)=\sum_{k=1}^{n}\alpha_{kj}e_k,\quad
\beta(e_j)=\sum_{k=1}^{n}\beta_{kj}e_k.$$
The axioms in Definition \ref{dia} are respectively equivalent to
\begin{eqnarray}
\beta_{kj}\alpha_{pk}-\alpha_{ji}\beta_{pj}&=&0,\\
\gamma_{ij}^p\beta_{qk}\gamma_{pq}^r-\alpha_{pi}\gamma_{jk}^q\gamma_{pq}^r&=&0,\\
\gamma_{ij}^p\beta_{qk}\gamma_{pq}^r-\alpha_{pi}\delta_{jk}^q\gamma_{pq}^r&=&0,\\
\gamma_{ij}^p\beta_{qk}\delta_{pq}^r-\alpha_{pi}\delta_{jk}^q\delta_{pq}^r&=&0,\\
\delta_{ij}^p\beta_{qk}\gamma_{pq}^r-\alpha_{pi}\gamma_{jk}^q\delta_{pq}^r&=&0,\\
\delta_{ij}^p\beta_{qk}\gamma_{pq}^r-\alpha_{pi}\delta_{jk}^q\delta_{pq}^r&=&0.
\end{eqnarray}
% \end{lemma}
\subsection{One dimensional}
There is only one $1$-dimensional BiHom-associative dialgebra ; the nul (or trivial) BiHom-associative dialgebra.

\subsection{Two dimensional}

\begin{tabular}{||c||c||c||c||c||c||c||}
\hline
$Algebras$&Multiplications &Morphisms $\alpha,\beta$.
\\ \hline 
$\mathcal{A}lg_1$&
$\begin{array}{ll}  
e_1\dashv e_2=ae_1,\\
e_2\dashv e_1=be_1,\\
e_1\vdash e_2=ce_1,\\
e_2\vdash e_1=de_1,\\
e_2\vdash e_2=fe_1.
\end{array}$
&
$\begin{array}{ll}  
\alpha(e_2)=e_1,\\
\beta(e_2)=e_1
\end{array}$
\\ \hline 
$\mathcal{A}lg_2$
&
$\begin{array}{ll}  
e_1\dashv e_2=ae_1,\\
e_2\dashv e_1=ae_1,\\
e_2\dashv e_2=e_1,\\
e_1\vdash e_2=e_1,\\
e_2\vdash e_1=e_1,
\end{array}$
&
$\begin{array}{ll}  
\alpha(e_2)=e_1,\\
\beta(e_2)=e_1
\end{array}$
\\ \hline
% \end{tabular}
% 
% 
% \begin{tabular}{||c|c|c|c|c|c|c||}
% \hline
% $Algebras$&Multiplications&Morphisms $\alpha,\beta$.
% \\ \hline
$\mathcal{A}lg_3$
&
$\begin{array}{ll}   
e_1\dashv e_2=ae_1,\\
e_1\vdash e_2=be_1,\\
e_2\vdash e_1=ce_1,\\
e_2\vdash e_2=de_1
\end{array}$
&
$\begin{array}{ll}  
\alpha(e_2)=e_1,\\
\beta(e_2)=e_1
\end{array}$
\\ \hline  
$\mathcal{A}lg_4$
&
$\begin{array}{ll}   
e_1\dashv e_2=e_1,\\
e_2\dashv e_1=e_1,\\
e_2\dashv e_2=ae_1,\\
e_1\vdash e_2=be_1,\\
e_2\vdash e_1=ce_1,\\
e_2\vdash e_2=de_1,\\
\end{array}$
&
$\begin{array}{ll}  
\alpha(e_2)=e_1,\\
\beta(e_2)=e_1
\end{array}$
\\ \hline  
\end{tabular}	
\begin{remark}
 In two dimensional, all of the BiHom-associative dialgebras are Hom-associative dialgebras i.e. $\alpha=\beta$.
\end{remark}

\subsection{Three dimensional}

\begin{tabular}{||c||c||c||c||c||c||c||}
\hline
$Algebras$&Multiplications&Morphisms $\alpha,\beta$.
\\ \hline 
$\mathcal{A}lg_1$&
$\begin{array}{ll}  
e_1\dashv e_2=e_1,\\
e_2\dashv e_1=e_1,\\
e_2\dashv e_2=ae_1,\\
e_2\dashv e_3=be_1,\\ 
\end{array}$
$\begin{array}{ll}  
e_3\dashv e_2=ce_1,\\
e_2\vdash e_1=e_1,\\
e_2\vdash e_2=de_1,\\
e_3\vdash e_2=fe_1,\\
\end{array}$
&
$\begin{array}{ll}  
\alpha(e_2)=e_1\\
\beta(e_2)=e_1,\\
\beta(e_3)=be_3
\end{array}$
\\ \hline 
\end{tabular}

\begin{tabular}{||c||c||c||c||c||c||c||}
\hline
$Algebras$&Multiplications&Morphisms $\alpha,\beta$.
\\ \hline 
$\mathcal{A}lg_2$&
$\begin{array}{ll}  
e_1\dashv e_2=e_1,\\
e_2\dashv e_1=e_1,\\
e_2\dashv e_2=e_1,\\
e_2\dashv e_3=e_1,\\ 
e_3\dashv e_2=e_1,\\
\end{array}$
$\begin{array}{ll}  
e_1\vdash e_2=e_1,\\
e_2\vdash e_1=e_1,\\
e_2\vdash e_2=e_1,\\
e_3\vdash e_2=e_1,\\
\end{array}$
&
$\begin{array}{ll}  
\alpha(e_2)=e_1\\
\beta(e_2)=e_1,\\
\beta(e_3)=be_3
\end{array}$
\\ \hline 
$\mathcal{A}lg_3$&
$\begin{array}{ll}  
e_1\dashv e_2=e_1,\\
e_2\dashv e_1=e_,\\
e_2\dashv e_2=e_1,\\
e_2\dashv e_3=e_1,\\ 
e_3\dashv e_2=e_1,\\
\end{array}$
$\begin{array}{ll}  
e_1\vdash e_2=e_1,\\
e_2\vdash e_2=e_1,\\
e_2\vdash e_3=e_1,\\
e_3\vdash e_2=e_1,\\
\end{array}$
&
$\begin{array}{ll}  
\alpha(e_2)=e_1\\
\beta(e_2)=e_1,\\
\beta(e_3)=be_3
\end{array}$
% \\ \hline
% \end{tabular}
% 
% \begin{tabular}{||c||c||c||c||c||c||c||}
% \hline
% $Algebras$&Multiplications&Morphisms $\alpha,\beta$.
\\ \hline 
$\mathcal{A}lg_4$&
$\begin{array}{ll}  
e_1\dashv e_2=e_1,\\
e_2\dashv e_1=e_1,\\
e_2\dashv e_2=e_1,\\
e_2\dashv e_3=e_1,\\
\end{array}$
$\begin{array}{ll}  
e_1\vdash e_2=e_1,\\
e_2\vdash e_1=e_1,\\
e_2\vdash e_2=e_1,\\
e_2\vdash e_3=e_1,\\
e_3\vdash e_2=e_1,
\end{array}$
&
$\begin{array}{ll}  
\alpha(e_2)=e_1\\
\beta(e_2)=e_1,\\
\beta(e_3)=be_3
\end{array}$
% \\ \hline
% \end{tabular}
% 
% \begin{tabular}{||c||c||c||c||c||c||c||}
% \hline
% $Algebras$&Multiplications&Morphisms $\alpha,\beta$.
\\ \hline
$\mathcal{A}lg_5$&
$\begin{array}{ll}  
e_1\dashv e_2=e_1,\\
e_2\dashv e_1=e_1,\\
e_2\dashv e_2=e_1,\\
e_2\dashv e_3=e_1,\\
\end{array}$
$\begin{array}{ll}  
e_1\vdash e_2=e_1,\\
e_2\vdash e_1=e_1,\\
e_2\vdash e_3=e_1,\\
e_3\vdash e_2=e_1,
\end{array}$
&
$\begin{array}{ll}  
\alpha(e_2)=e_1\\
\beta(e_2)=e_1,\\
\beta(e_3)=be_3
\end{array}$
\\ \hline
\end{tabular}

\subsection{Four dimensional}

\begin{tabular}{||c||c||c||c||c||c||c||}
\hline
$Algebras$&Multiplications&Morphisms $\alpha,\beta$.
\\ \hline 
$\mathcal{A}lg_1$&
$\begin{array}{ll}  
e_2\dashv e_1=e_4,\\
e_2\dashv e_3=e_4,\\
e_3\dashv e_1=e_4,\\
e_3\dashv e_2=e_4,\\
e_4\dashv e_4=e_4,\\
\end{array}$
$\begin{array}{ll}  
e_1\vdash e_2=e_4,\\
e_2\vdash e_2=ce_4,\\
e_3\vdash e_3=e_4,\\
e_3\vdash e_4=de_3,\\
\end{array}$
&
$\begin{array}{ll}  
\alpha(e_2)=be_2\\
\beta(e_2)=e_1,\\
\end{array}$
$\begin{array}{ll}  
\beta(e_3)=e_2,\\
\beta(e_4)=e_3,\\
\end{array}$
\\ \hline 
$\mathcal{A}lg_2$&
$\begin{array}{ll}  
e_1\dashv e_2=e_4,\\
e_1\dashv e_4=e_4,\\
e_2\dashv e_1=ae_4,\\
e_2\dashv e_3=be_4,\\
e_3\dashv e_1=-ce_4,\\
e_3\dashv e_2=e_4,\\
\end{array}$
$\begin{array}{ll}  
e_1\vdash e_2=e_4,\\
e_2\vdash e_2=de_4,\\
e_3\vdash e_3=fe_4,\\
e_3\vdash e_4=e_4,\\
e_4\vdash e_4=e_4,\\
\end{array}$
&
$\begin{array}{ll}  
\alpha(e_2)=e_2\\
\beta(e_2)=e_1,\\
\end{array}$
$\begin{array}{ll}  
\beta(e_3)=e_2,\\
\beta(e_4)=e_3,\\
\end{array}$
\\ \hline 
$\mathcal{A}lg_3$&
$\begin{array}{ll}  
e_1\dashv e_4=e_4,\\
e_2\dashv e_1=e_4,\\
e_2\dashv e_2=e_4,\\
e_2\dashv e_3=be_4,\\
e_3\dashv e_1=ce_4,\\
e_3\dashv e_2=e_4,\\
\end{array}$
$\begin{array}{ll}  
e_1\vdash e_2=e_4,\\
e_2\vdash e_2=e_4,\\
e_3\vdash e_2=ce_4,\\
e_3\vdash e_3=de_4,\\
e_4\vdash e_4=e_4,\\
\end{array}$
&
$\begin{array}{ll}  
\alpha(e_2)=e_2\\
\alpha(e_3)=e_3\\
\end{array}$
$\begin{array}{ll}  
\beta(e_2)=e_1,\\
\beta(e_3)=e_2,\\
\beta(e_4)=e_3,\\
\end{array}$
% \\ \hline
% \end{tabular}
% 
% \begin{tabular}{||c||c||c||c||c||c||c||}
% \hline
% $Algebras$&Multiplications&Morphisms $\alpha,\beta$.
\\ \hline 
$\mathcal{A}lg_4$&
$\begin{array}{ll}  
e_1\dashv e_4=e_4,\\
e_2\dashv e_2=ae_4,\\
e_2\dashv e_3=e_4,\\
e_3\dashv e_1=e_4,\\
e_3\dashv e_2=ce_4,\\
\end{array}$
$\begin{array}{ll}  
e_3\dashv e_3=e_4,\\
e_1\vdash e_2=e_4,\\
e_2\vdash e_2=e_4,\\
e_3\vdash e_3=e_4,\\
e_4\vdash e_4=e_4,\\
\end{array}$
&
$\begin{array}{ll}  
\alpha(e_2)=e_2\\
\alpha(e_3)=e_3\\
\end{array}$
$\begin{array}{ll}  
\beta(e_2)=e_1,\\
\beta(e_3)=e_2,\\
\beta(e_4)=e_3,\\
\end{array}$
\\ \hline
\end{tabular}

\begin{tabular}{||c||c||c||c||c||c||c||}
\hline
$Algebras$&Multiplications&Morphisms $\alpha,\beta$.
\\ \hline 
$\mathcal{A}lg_5$&
$\begin{array}{ll}  
e_1\dashv e_4=e_4,\\
e_2\dashv e_2=e_4,\\
e_2\dashv e_3=e_4,\\
e_3\dashv e_4=e_4,\\
e_3\dashv e_2=e_4,\\

\end{array}$
$\begin{array}{ll}  
e_3\dashv e_3=e_4,\\
e_3\dashv e_4=e_4,\\
e_1\vdash e_3=e_4,\\
e_2\vdash e_2=e_4,\\
e_3\vdash e_3=e_4,\\
\end{array}$
&
$\begin{array}{ll}  
\alpha(e_2)=e_2\\
\alpha(e_4)=e_4\\
\end{array}$
$\begin{array}{ll}  
\beta(e_2)=e_1,\\
\beta(e_3)=e_2,\\
\beta(e_4)=e_3,\\
\end{array}$
\\ \hline 
$\mathcal{A}lg_6$&
$\begin{array}{ll}  
e_2\dashv e_2=e_4,\\
e_2\dashv e_3=e_4,\\
e_3\dashv e_2=e_4,\\
e_3\dashv e_3=e_4,\\
e_3\dashv e_4=e_4,\\
\end{array}$
$\begin{array}{ll}  
e_1\vdash e_3=e_4,\\
e_1\vdash e_4=e_4,\\
e_2\vdash e_2=e_4,\\
e_3\vdash e_1=e_4,\\
e_3\vdash e_3=e_4,\\
\end{array}$
&
$\begin{array}{ll}  
\alpha(e_2)=e_2\\
\alpha(e_4)=e_4
\end{array}$
$\begin{array}{ll}  
\beta(e_2)=e_1,\\
\beta(e_3)=e_2,\\
\beta(e_4)=e_3,\\
\end{array}$
\\ \hline 
$\mathcal{A}lg_7$&
$\begin{array}{ll}  
e_1\dashv e_2=e_4,\\
e_1\dashv e_4=e_4,\\
e_2\dashv e_2=e_4,\\
e_2\dashv e_4=fe_4,\\
e_3\dashv e_3=-ge_4,\\
\end{array}$
$\begin{array}{ll}  
e_1\vdash e_4=e_4,\\
e_2\vdash e_2=e_4,\\
e_2\vdash e_3=e_4,\\
e_3\vdash e_1=e_4,\\
e_3\vdash e_2=-he_4,\\
e_3\vdash e_3=ke_4,\\
\end{array}$
&
$\begin{array}{ll}  
\alpha(e_3)=e_3,\\
\alpha(e_4)=e_4\\
\end{array}$
$\begin{array}{ll}  
\beta(e_2)=e_1,\\
\beta(e_3)=e_2,\\
\beta(e_4)=e_3,\\
\end{array}$
% \\ \hline
% \end{tabular}
% 
% 
% \begin{tabular}{||c|c|c|c|c|c|c||}
% \hline
% $Algebras$&Multiplications&Morphisms $\alpha,\beta$.
\\ \hline  
$\mathcal{A}lg_8$&
$\begin{array}{ll}  
e_1\dashv e_3=e_4,\\
e_1\dashv e_4=e_4,\\
e_2\dashv e_2=e_4,\\
e_2\dashv e_4=e_4,\\
e_3\dashv e_3=e_4,\\
\end{array}$
$\begin{array}{ll}  
e_1\vdash e_3=e_4,\\
e_3\vdash e_1=e_4,\\
e_3\vdash e_2=e_4,\\
e_3\vdash e_3=e_4,\\
\end{array}$
&
$\begin{array}{ll}  
\alpha(e_3)=e_3\\
\alpha(e_4)=e_4\\
\end{array}$
$\begin{array}{ll}  
\beta(e_2)=e_1,\\
\beta(e_3)=e_2,\\
\beta(e_4)=e_3,\\
\end{array}$
\\ \hline 
$\mathcal{A}lg_9$&
$\begin{array}{ll}  
e_2\dashv e_2=e_1+e_4,\\
e_2\dashv e_3=e_1+e_4,\\
e_3\dashv e_2=e_1+e_4,\\
e_4\dashv e_2=e_1+e_4,\\
\end{array}$
$\begin{array}{ll}  
e_1\vdash e_2=-e_1+e_4,\\
e_2\vdash e_2=e_1,\\
e_3\vdash e_3=e_1+e_4,\\
e_4\vdash e_2=e_1+e_4,\\
\end{array}$
&
$\begin{array}{ll}  
\alpha(e_2)=e_1\\
\alpha(e_3)=e_2\\
\alpha(e_4)=e_4\\
\end{array}$
$\begin{array}{ll}  
\beta(e_3)=e_3,\\
\beta(e_4)=e_4,\\
\end{array}$
\\ \hline
$\mathcal{A}lg_{10}$&
$\begin{array}{ll}  
e_1\dashv e_2=e_4,\\
e_2\dashv e_2=e_1+e_4,\\
e_2\dashv e_3=e_4,\\
e_3\dashv e_2=e_1,\\
e_3\dashv e_3=e_4,\\
\end{array}$
$\begin{array}{ll}  
e_4\dashv e_2=e_4,\\
e_1\vdash e_2=e_4,\\
e_2\vdash e_2=e_1,\\
e_3\vdash e_3=e_1+e_4,\\
e_4\vdash e_2=e_1+e_4,\\
\end{array}$
&
$\begin{array}{ll}  
\alpha(e_2)=e_1\\
\alpha(e_3)=e_2\\
\alpha(e_4)=e_4\\
\end{array}$
$\begin{array}{ll}  
\beta(e_3)=e_3,\\
\beta(e_4)=e_4,\\
\end{array}$
\\ \hline 
$\mathcal{A}lg_{11}$&
$\begin{array}{ll}  
e_2\dashv e_2=fe_1+ge_4,\\
e_2\dashv e_3=e_4,\\
e_3\dashv e_2=e_1+e_4,\\
e_3\dashv e_3=e_4,\\
e_4\dashv e_2=e_4,\\
\end{array}$
$\begin{array}{ll}  
e_1\vdash e_2=e_4,\\
e_2\vdash e_2=he_1-ke_4,\\
e_3\vdash e_3=e_1+e_4,\\
e_4\vdash e_2=e_1+e_4,\\
\end{array}$
&
$\begin{array}{ll}  
\alpha(e_2)=e_1\\
\alpha(e_3)=e_2\\
\alpha(e_4)=e_4\\
\end{array}$
$\begin{array}{ll}  
\beta(e_3)=e_3,\\
\beta(e_4)=e_4,\\
\end{array}$
% \\ \hline
% \end{tabular}
% 
% \begin{tabular}{||c||c||c||c||c||c||c||}
% \hline
% $Algebras$&Multiplications&Morphisms $\alpha,\beta$.
\\ \hline 
$\mathcal{A}lg_{12}$&
$\begin{array}{ll}  
e_1\dashv e_4=e_4,\\
e_2\dashv e_2=e_4,\\
e_2\dashv e_3=ae_4,\\
e_2\dashv e_4=e_4,\\
\end{array}$
$\begin{array}{ll}  
e_3\dashv e_3=e_4,\\
e_1\vdash e_2=e_4,\\
e_2\vdash e_2=e_4,\\
e_2\vdash e_3=-be_4,\\
e_3\vdash e_2=e_4,\\
\end{array}$
&
$\begin{array}{ll}  
\alpha(e_3)=e_3\\
\alpha(e_4)=e_4\\
\beta(e_1)=e_1,\\
\end{array}$
$\begin{array}{ll}  
\beta(e_2)=e_1+e_2,\\
\beta(e_3)=e_2+e_3,\\
\beta(e_4)=e_3+e_4,\\
\end{array}$
\\ \hline
$\mathcal{A}lg_{13}$&
$\begin{array}{ll}  
e_1\dashv e_2=e_4,\\
e_1\dashv e_3=e_4,\\
e_2\dashv e_1=e_4,\\
e_2\dashv e_2=e_4,\\
e_2\dashv e_3=e_4,\\
\end{array}$
$\begin{array}{ll}  
e_3\dashv e_1=e_4,\\
e_1\vdash e_2=e_4,\\
e_2\vdash e_2=e_4,\\
e_2\vdash e_3=e_4,\\
e_3\vdash e_3=e_4,\\
\end{array}$
&
$\begin{array}{ll}  
\alpha(e_2)=e_2\\
\alpha(e_3)=e_3\\
\beta(e_1)=e_1,\\
\end{array}$
$\begin{array}{ll}  
\beta(e_2)=e_1+e_2,\\
\beta(e_3)=e_2+e_3,\\
\beta(e_4)=e_3+e_4,\\
\end{array}$
\\ \hline 
$\mathcal{A}lg_{14}$&
$\begin{array}{ll}  
e_1\dashv e_1=e_4,\\
e_1\dashv e_3=-ce_4,\\
e_2\dashv e_2=e_4,\\
e_2\dashv e_3=e_4,\\
e_3\dashv e_1=e_4,\\
e_3\dashv e_2=e_4,\\
\end{array}$
$\begin{array}{ll}
e_3\dashv e_3=-2ae_4,\\  
e_1\vdash e_2=e_4,\\
e_2\vdash e_2=e_4,\\
e_2\vdash e_3=e_4,\\
e_3\vdash e_2=e_4,\\
e_3\vdash e_3=be_4,\\
\end{array}$
&
$\begin{array}{ll}  
\alpha(e_1)=e_1\\
\alpha(e_2)=e_2\\
\beta(e_1)=e_1,\\
\end{array}$
$\begin{array}{ll}  
\beta(e_2)=e_1+e_2,\\
\beta(e_3)=e_2+e_3,\\
\beta(e_4)=e_3+e_4,\\
\end{array}$
\\ \hline 
\end{tabular}

\begin{tabular}{||c||c||c||c||c||c||c||}
\hline
$Algebras$&Multiplications&Morphisms $\alpha,\beta$.
\\ \hline 
$\mathcal{A}lg_{15}$&
$\begin{array}{ll}  
e_1\dashv e_1=-e_4,\\
e_1\dashv e_2=ae_4,\\
e_2\dashv e_3=be_4,\\
e_3\dashv e_1=ce_4,\\
e_3\dashv e_2=de_4,\\
e_3\dashv e_3=e_4,\\
\end{array}$
$\begin{array}{ll}  
e_1\vdash e_2=fe_4,\\
e_1\vdash e_4=e_4,\\
e_2\vdash e_2=e_4,\\
e_2\vdash e_3=e_4,\\
e_3\vdash e_2=ge_4,\\
e_3\vdash e_3=e_4,\\
e_3\vdash e_4=e_4,\\
\end{array}$
&
$\begin{array}{ll}  
\alpha(e_2)=e_2\\
\beta(e_1)=e_1,\\
\end{array}$
$\begin{array}{ll}  
\beta(e_2)=e_1+e_2,\\
\beta(e_3)=e_2+e_3,\\
\beta(e_4)=e_3+e_4,\\
\end{array}$
\\ \hline 
$\mathcal{A}lg_{16}$&
$\begin{array}{ll}  
e_1\dashv e_2=e_4,\\
e_2\dashv e_1=e_4,\\
e_2\dashv e_2=e_4,\\
e_2\dashv e_3=ae_4,\\
e_2\dashv e_4=e_4,\\
e_3\dashv e_2=e_4,\\
\end{array}$
$\begin{array}{ll}  
e_1\vdash e_2=be_4,\\
e_2\vdash e_2=ce_4,\\
e_3\vdash e_2=de_4,\\
e_3\vdash e_3=e_4,\\
e_3\vdash e_4=e_4,\\
\end{array}$
&
$\begin{array}{ll}  
\alpha(e_1)=ae_1\\
\beta(e_1)=e_1,\\
\end{array}$
$\begin{array}{ll}  
\beta(e_2)=e_1+e_2,\\
\beta(e_3)=e_2+e_3,\\
\beta(e_4)=e_3+e_4,\\
\end{array}$
\\ \hline 
\end{tabular}

   \section{Derivation of BiHom-associative dialgebras}
In this section, we introduce and study derivations of BiHom-dendrifom, BiHom-dialgebras.
\begin{definition}
Let $(A,\mu, \alpha, \beta)$ be a BiHom-associative algebra. A linear map ${D} : A\longrightarrow  A$ 
is called an $(\alpha^s, \beta^r)$-derivation of 
$(A,\mu, \alpha, \beta)$, if it satisfies  
$$
{D}\circ\alpha=\alpha\circ {D}\quad \text{and}\quad{D}\circ\beta=\beta\circ {D} 
$$
$$
{D}\circ\mu(x, y)=\mu({D}(x), \alpha^s \beta^r(y))+\mu(\alpha^s \beta^r(x), {D}(y))
$$
\end{definition}
\begin{example}
We consider the $2$-dimensional BiHom-associative with a basis $\left\{e_1, e_2\right\}$.
For $\mu(e_1,e_1)=-e_1,\quad \mu(e_1, e_2)=-e_2,\quad \mu(e_2, e_1)=0,\quad \mu(e_2, e_2)=e_2$ and \\
$\alpha(e_1)=e_1,\quad\alpha(e_2)=-e_2,\quad \beta(e_1)=e_1,\quad \beta(e_2)=e_2.$ A direct computation gives that : 
${D}(e_1)=d_{22}e_1,\quad {D}(e_2)=d_{22}e_2,$\\
$\alpha^s(e_1)=\frac{\alpha_{21}\beta_{22}}{\beta_{21}}e_1+\frac{e_2}{2\beta_{21}},\quad\alpha^s(e_2)=\alpha_{21}e_1,\quad
\beta^r(e_1)=\frac{e_2}{2\beta_{21}}e_2,\quad\beta^r(e_2)=\beta_{21}e_1+\beta_{22}e_2$.
\end{example}

\begin{definition}
Let $({D}, \dashv, \vdash, \alpha, \beta)$ be a BiHom-associative dialgebra. A linear map ${D} : {D}\rightarrow {D}$ is called an 
$(\alpha^k, \beta^l)$-derivation of ${D}$ if it satisfies 
\begin{enumerate}
	\item [$1.$] ${D}\circ\alpha=\alpha\circ{D},\,{D}\circ\beta=\beta\circ{D}$;
	\item [$2.$] ${D}(x\dashv y)=\alpha^k\beta^l(x)\dashv{D}(y)+{D}(x)\dashv\alpha^k\beta^l(y);$
	\item [$3.$] ${D}(x\vdash y)=\alpha^k\beta^l(x)\vdash {D}(y)+{D}(x)\vdash\alpha^k\beta^l(y),$
\end{enumerate}
for $x, y\in   {D}.$

We denote by $Der({D}):=\displaystyle\bigoplus_{k\geq 0}\displaystyle\bigoplus_{l\geq 0}Der_{(\alpha^k, \beta^l)}({D})$, where
 $Der_{(\alpha^k, \beta^l)}({D})$ is the set of all $(\alpha^k,\beta^l)$-derivations of ${D}$.
\end{definition} 
\begin{proposition}
For any ${D}\in Der_{(\alpha^s, \beta^r)}(A)$ and ${D}'\in Der_{(\alpha^{s'},\beta^{r'})}(A)$, we have 
$\left[{D},{D'}\right]\in Der_{(\alpha^{s+s'}, \beta^{r+r'})}(A)$.
\end{proposition}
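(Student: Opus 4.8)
The plan is to show directly that the bracket $[D, D']=D\circ D'-D'\circ D$ fulfils the two defining conditions of an $(\alpha^{s+s'},\beta^{r+r'})$-derivation of $(A,\mu,\alpha,\beta)$. I would first dispose of compatibility with the structure maps: since each of $D$ and $D'$ commutes with $\alpha$ and with $\beta$, so do the compositions $D\circ D'$ and $D'\circ D$, and hence $[D,D']\circ\alpha=\alpha\circ[D,D']$ and $[D,D']\circ\beta=\beta\circ[D,D']$. This settles the first axiom with no computation.

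The substance of the argument is the Leibniz-type rule for $\mu$. Applying the derivation rule for $D'$ first and then for $D$, and using that $D$ commutes with $\alpha^{s'}\beta^{r'}$ (so that $D(\alpha^{s'}\beta^{r'}(y))=\alpha^{s'}\beta^{r'}(D(y))$, and likewise on the left slot), I would expand $D(D'(\mu(x,y)))$ into four terms:
\begin{align*}
DD'\mu(x,y)&=\mu(DD'(x),\alpha^{s+s'}\beta^{r+r'}(y))+\mu(\alpha^{s}\beta^{r}D'(x),\alpha^{s'}\beta^{r'}D(y))\\
&\quad+\mu(\alpha^{s'}\beta^{r'}D(x),\alpha^{s}\beta^{r}D'(y))+\mu(\alpha^{s+s'}\beta^{r+r'}(x),DD'(y)).
\end{align*}
Interchanging the roles of $D$ and $D'$ (and of $s,r$ with $s',r'$) gives the corresponding expansion of $D'(D(\mu(x,y)))$. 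The two mixed middle terms there are literally the same expressions as the mixed terms above, so upon forming the difference $DD'-D'D$ they cancel pairwise, leaving precisely
\[
[D,D']\mu(x,y)=\mu([D,D'](x),\alpha^{s+s'}\beta^{r+r'}(y))+\mu(\alpha^{s+s'}\beta^{r+r'}(x),[D,D'](y)),
\]
which is exactly the $(\alpha^{s+s'},\beta^{r+r'})$-derivation identity.

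The only real obstacle is the bookkeeping: one must pull the twist $\alpha^{s'}\beta^{r'}$ through $D$ and $\alpha^{s}\beta^{r}$ through $D'$ via the commutation relations so that the mixed terms produced by the two orders of composition coincide term for term; the commuting hypotheses on $\alpha,\beta,D,D'$ are exactly what makes this matching possible. Nothing is associativity-specific in this step, so the dialgebra version runs verbatim, with $\mu$ replaced in turn by $\dashv$ and by $\vdash$, and no further work is required.
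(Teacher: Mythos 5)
Your proposal is correct and follows essentially the same route as the paper's proof: apply the derivation rule twice in each order, use the commutation of $D$ and $D'$ with the powers of $\alpha$ and $\beta$ to match the two mixed terms, and let them cancel in the difference, with the commutation of $[D,D']$ with $\alpha$ and $\beta$ checked separately. If anything, your bookkeeping of the twists ($\alpha^{s'}\beta^{r'}$ for the inner application of $D'$) is more careful than the paper's, which writes $\alpha^{s}\beta^{r}$ throughout.
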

\begin{proof}
For $x, y\in A$, we have 
$$\begin{array}{ll}
\left[{D}, {D'}\right]\circ\mu(x, y)
&={D}\circ{D'}\circ\mu(x, y)-{D'}\circ{D}\circ\mu(x, y)\\
&={D}(\mu({D}'(x), \alpha^s\beta^r(y))+\mu(\alpha^s\beta^r(x),{D}'(y)))\\
&-{D}'(\mu({D}(x), \alpha^s\beta^r(y))+\mu(\alpha^s\beta^r(x),{D}(y)))\\
&=\mu({D}\circ{D'}(x),\alpha^{s+s'}\beta^{r+r'}(y))+\mu(\alpha^s\beta^r\circ{D'}(x),{D}\circ\alpha^s\beta^r(y))\\
&+\mu({D}\circ\alpha^s\beta^r(x),\alpha^s\beta^r\circ{D}'(y))+\mu(\alpha^{s+s'}\beta^{r+r'}(x),{D}\circ{D'}(y))\\
&-\mu({D'}\circ{D}(x),\alpha^{s+s'}\beta^{r+r'}(y))-\mu(\alpha^s\beta^r\circ{D}(x),{D'}\circ\alpha^s\beta^r(y))\\
&-\mu({D'}\circ\alpha^s\beta^r(x),\alpha^s\beta^r{D}(y))-\mu(\alpha^{s+s'}\beta^{r+r'}(x),{D'}\circ{D}(y)).
\end{array}$$
Since ${D}$ and ${D}'$ satisfy 
${D}\circ\alpha=\alpha\circ{D},\,{D}'\circ\alpha=\alpha\circ{D}'$,\,  
${D}\circ\beta=\beta\circ{D},\,{D}'\circ\beta=\beta\circ{D}'$.\\ We obtain  
$\alpha^s\beta^r\circ{D'}={D'}\circ\alpha^s\beta^r,\, {D}\circ\alpha^{s'}\beta^{r'}=\alpha^{s'}\beta^{r'}\circ{D}.$
Therefore, we arrive at\\
$\left[{D},{D'}\right]\circ\mu(x, y)=\mu(\alpha^{s+s'}\beta^{r+r'}(x),\left[{D},{D'}\right](y))+
\mu(\left[{D},{D'}\right](x),\alpha^{s+s'}\beta^{r+r'}(y)).$

Furthermore, it is straightforward to see that 
$$\begin{array}{ll}
\left[{D},{D'}\right]\circ\alpha
&={D}\circ{D'}\circ\alpha-{D'}\circ{D}\circ\alpha\\
&=\alpha\circ{D}\circ{D}'-\alpha\circ{D}'\circ{D}=\alpha\circ\left[{D},{D'}\right].
\end{array}$$ 
$$\begin{array}{ll}
\left[{D},{D'}\right]\circ\beta
&={D}\circ{D'}\circ\beta-{D'}\circ{D}\circ\beta\\
&=\beta\circ{D}\circ{D}'-\beta\circ{D}'\circ{D}=\beta\circ\left[{D},{D'}\right]
\end{array}$$
which yields that $\left[{D},{D'}\right]\in Der_{(\alpha^{s+s'}, \beta^{r+r'})}(A)$ with $\mu=\dashv=\vdash.$
\end{proof}

\begin{proposition}
The space $Der_{(\alpha^{s}, \beta^{r})}(A)$ is an invariant of the triple BiHom-associative algebra A.
\end{proposition}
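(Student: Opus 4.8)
The plan is to read ``invariant'' in the sense of classification theory: the space $Der_{(\alpha^s,\beta^r)}(A)$, equipped with the Lie bracket furnished by the preceding proposition, should be preserved up to isomorphism by any isomorphism of BiHom-associative algebras, and therefore distinguishes non-isomorphic algebras in the tables of Section 4. Concretely, I would fix an isomorphism $f:(A,\mu,\alpha,\beta)\rightarrow(A',\mu',\alpha',\beta')$ of BiHom-associative algebras and introduce the conjugation map
\[
\Phi:Der_{(\alpha^s,\beta^r)}(A)\longrightarrow Der_{({\alpha'}^s,{\beta'}^r)}(A'),\qquad \Phi(D)=f\circ D\circ f^{-1}.
\]
Since $f$ is bijective and multiplicative, $f^{-1}$ is again a morphism, so that $f^{-1}\circ\alpha'=\alpha\circ f^{-1}$, $f^{-1}\circ\beta'=\beta\circ f^{-1}$ and $f^{-1}(\mu'(x',y'))=\mu(f^{-1}(x'),f^{-1}(y'))$; iterating the intertwining relations also yields $f\circ\alpha^s\beta^r={\alpha'}^s{\beta'}^r\circ f$, which is the identity used at every step.

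First I would verify that $\Phi(D)$ really is an $({\alpha'}^s,{\beta'}^r)$-derivation of $A'$. The commutation $\Phi(D)\circ\alpha'=\alpha'\circ\Phi(D)$ follows by threading $f^{-1}\circ\alpha'=\alpha\circ f^{-1}$, then $D\circ\alpha=\alpha\circ D$, then $f\circ\alpha=\alpha'\circ f$ through $f\circ D\circ f^{-1}\circ\alpha'$, and the same argument applies to $\beta'$. For the derivation identity, writing $x=f^{-1}(x')$ and $y=f^{-1}(y')$ and successively using that $f^{-1}$ is multiplicative, that $D$ obeys the Leibniz rule on $A$, that $f$ is multiplicative, and that $f\circ\alpha^s\beta^r={\alpha'}^s{\beta'}^r\circ f$, one obtains
\[
\Phi(D)\bigl(\mu'(x',y')\bigr)=\mu'\bigl({\alpha'}^s{\beta'}^r(x'),\Phi(D)(y')\bigr)+\mu'\bigl(\Phi(D)(x'),{\alpha'}^s{\beta'}^r(y')\bigr),
\]
so that $\Phi(D)\in Der_{({\alpha'}^s,{\beta'}^r)}(A')$.

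Next I would observe that $\Phi$ is linear and that the map $D'\mapsto f^{-1}\circ D'\circ f$, which is well defined by the same verification applied to the isomorphism $f^{-1}$, is a two-sided inverse of $\Phi$; hence $\Phi$ is a linear isomorphism. Finally, since conjugation respects composition, $\Phi([D_1,D_2])=\Phi(D_1)\circ\Phi(D_2)-\Phi(D_2)\circ\Phi(D_1)=[\Phi(D_1),\Phi(D_2)]$, so $\Phi$ is in fact an isomorphism of Lie algebras, which is exactly the invariance asserted. No deep obstacle is expected here; the only delicate point is the bookkeeping of the intertwining relations --- confirming that $f^{-1}$ inherits every morphism property and that $f\circ\alpha^s\beta^r={\alpha'}^s{\beta'}^r\circ f$ --- because each line of the derivation identity depends on sliding $f$ or $f^{-1}$ past $\alpha$, $\beta$ and $\mu$ at precisely the right moment.
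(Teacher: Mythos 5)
Your proposal is correct and follows essentially the same route as the paper: both prove invariance by conjugating a derivation by the given isomorphism ($D\mapsto \sigma\circ D\circ\sigma^{-1}$), checking that the conjugate is again an $(\alpha^s,\beta^r)$-derivation, and observing that this conjugation is a linear isomorphism. The only cosmetic difference lies in the final compatibility check --- you verify that conjugation preserves the commutator bracket, while the paper checks compatibility with a trace-twisted product $\dashv_{tr}$ --- but the core argument is identical.
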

\begin{proof}
Let $\sigma :  (A, \dashv_A, \vdash_A,  \alpha^s, \beta^r)\longrightarrow (B, \dashv_B, \vdash_B, \alpha^s, \beta^r)$  be a triple BiHom-associative algebra isomorphism
and let ${D}$ be a $(\alpha^s, \beta^r)$-derivation of A. Then for any $x, y, z\in B$. We have :  
$$\begin{array}{ll}
\sigma{D}\sigma^{-1}\circ(((x)\dashv_B(y))\dashv_B (z))
&=\sigma{D}\circ((\sigma^{-1}(x)\dashv_A\sigma^{-1}(y))\dashv_A\sigma^{-1}(z))\\
&=\sigma({D}\circ\sigma^{-1}(x)\vdash_A\sigma^{-1}\circ\alpha^s\beta^r(y))\vdash_A\sigma^{-1}\circ\alpha^s\beta^r(z))\\
&+\sigma(\sigma^{-1}\circ\alpha^s\beta^r(x)\vdash_A{D}\circ\sigma^{-1}(y))\vdash_A\sigma^{-1}\circ\alpha^s\beta^r(z))\\
&+\sigma(\sigma^{-1}\circ\alpha^s\beta^r(x)\vdash_A\sigma^{-1}\circ\alpha^s\beta^r(y)\vdash_A{D}\circ\sigma^{-1}(z))\\
&=({D}\circ\sigma^{-1}(x)\dashv_B\alpha^s\beta^r(y))\dashv_B\alpha^s\beta^r(z))\\
&+(\alpha^s\beta^r(x)\dashv_B\sigma\circ{D}\circ\sigma^{-1}(y))\dashv_B\alpha^s\beta^r(z))\\
&+(\alpha^s\beta^r(x)\dashv_B\alpha^s\beta^r(y))\dashv_B{D}\circ\sigma^{-1}(z)).
\end{array}$$
Thus $\sigma\circ{D}\circ\sigma^{-1}$ is a $(\alpha^s, \beta^r)$-derivation of $B$, hence the mapping.
$\psi : Der_{(\alpha^{s},\beta^{r})}(A)\longrightarrow Der_{(\alpha^{s}, \beta^{r})}(B)$,\\ ${D}\longmapsto \sigma{D}\sigma^{-1}$
is an isomorphism of triple  BiHom-associative algebras. 

In fact, it is easy to see that $\psi$ is linear. Moreover let 
${D}_1, {D}_2, {D}_3$ be derivations of A : 
$$\begin{array}{ll}
&\alpha^s\beta^r\circ\psi({D}_1\dashv_{tr}{D}_2)\dashv_{tr}{D}_3)=\\
&=\alpha^s\beta^r\psi(tr({D}_1)({D}_2\dashv{D}_3))+\alpha^s\beta^r\psi(tr({D}_3)({D}_1{D}_2))
+\alpha^s\beta^r\psi(tr({D}_2)({D}_3\dashv {D}_1))\\
&=\alpha^s\beta^r tr({D}_1)\psi({D}_2\dashv{D}_3)
+\alpha^s\beta^r tr({D}_3)\psi({D}_1\dashv{D}_2)+\alpha^s\beta^r  tr({D}_2)\psi({D}_3\dashv{D}_1)\\
&=\alpha^s\beta^r tr(\psi({D}_1))(\psi({D}_2)\dashv\psi({D}_3))+\alpha^s\beta^r tr(\psi({D}_3))(\psi({D}_1)\dashv\psi({D}_2))\\
&+\alpha^s\beta^r tr(\psi({D}_2))\psi((\psi({D}_3)\dashv\psi({D}_1)),
\end{array}$$
since $\psi$ is a morphism of the $Der_{(\alpha^{s},\beta^{r})}(A)$ and $Der_{(\alpha^{s},\beta^{r})}(B)$, and 
$tr({D})=tr(\sigma\circ{D}\circ\sigma^{-1}).$\\
Then $\alpha^s\beta^r\psi(({D}_1\dashv_{tr}{D}_2)\dashv_{tr}{D}_3))=\alpha^s\beta^r((\psi({D}_1)\dashv_{tr}\psi({D}_2))\dashv_{tr}\psi({D}_3)).$
\end{proof}

% {\bf Acknowlegments :} 

\end{document}